\documentclass[12pt, twoside]{article}
\usepackage{amsfonts}
\usepackage{xcolor}
\usepackage{mathrsfs}
\usepackage[all,cmtip]{xy}
\usepackage{amsmath}
\usepackage{amssymb,amsthm,upref,amscd}
\usepackage{setspace}
\usepackage{enumerate}
\usepackage[titletoc]{appendix}
\usepackage{times}
\usepackage{cite}
\usepackage{tikz}
\usepackage{authblk}
\usepackage[colorinlistoftodos]{todonotes}
\usetikzlibrary{arrows}
\numberwithin{equation}{section}

\pagestyle{myheadings}
\def\titlerunning#1{\gdef\titrun{#1}}
\makeatletter
\def\author#1{\gdef\autrun{\def\and{\unskip, }#1}\gdef\@author{#1}}

\makeatother

\def\subjclass#1{{\renewcommand{\thefootnote}{}%
\footnote{\emph{Mathematics Subject Classification (2010):} #1}}}
\def\keywords#1{\par\medskip
\noindent\textbf{Keywords.} #1}

\allowdisplaybreaks

\theoremstyle{plain}
\newtheorem{Thm}{Theorem}[section]
\newtheorem{Lem}[Thm]{Lemma}
\newtheorem{claim}{Claim}[section]
\newtheorem{Cor}[Thm]{Corollary}
\newtheorem{Prop}[Thm]{Proposition}
\newtheorem*{Thm*}{Theorem}
\newtheorem*{claim*}{Claim}
\theoremstyle{definition}

\newtheorem*{Def*}{Definition}
\newtheorem*{Cor*}{Corollary}
\newtheorem{Rem}[Thm]{Remark}

\newcommand{\equ}{equation}
\newcommand{\C}{\mathbb{C}}
\newcommand{\N}{\mathbb{N}}
\newcommand{\R}{\mathbb{R}}
\newcommand{\Z}{\mathbb{Z}}

\DeclareMathOperator{\meas}{meas}
\DeclareMathOperator{\supp}{supp}

\DeclareMathOperator{\vol}{vol}

\newcommand\loc{\mathrm{loc}}

\let\nhatoksa=\theenumi
\let\nhatoksb=\labelenumi
\let\nhatoksc=\theenumii
\let\nhatoksd=\labelenumii
\newlength{\nhalengtha}
\setlength{\nhalengtha}{\leftmargini}
\newlength{\nhalengthb}
\setlength{\nhalengthb}{\leftmarginii}
\newlength{\nhalengthc}
\setlength{\nhalengthc}{\labelwidth}

\newcommand{\resetenum}{
\let\theenumi=\nhatoksa
\let\labelenumi=\nhatoksb
\let\theenumii=\nhatoksc
\let\labelenumii=\nhatoksd
\setlength{\leftmargini}{\nhalengtha}
\setlength{\leftmarginii}{\nhalengthb}
\setlength{\labelwidth}{\nhalengthc}
}

\newcommand\cg{\mathcal{G}}

\newcommand\ck{\mathcal{K}}
\newcommand\cl{\mathcal{L}}

\newcommand\rr{\mathcal{R}}
\newcommand\cs{\mathcal{S}}

\newcommand\cw{\mathcal{W}}

\def\mbs{\mathbb{S}}

\def\msf{\mathscr{F}}

\def\msj{\mathscr{J}}

\def\msn{\mathscr{N}}

\def\ig{\textit{g}}

\def\ov{\overline}

\def\pa {\partial}

\def\op{\oplus}

\def\al {\alpha}
\def\bt {\beta}
\def\de {\delta}
\def\Ga {\Gamma}

\def\lm {\lambda}

\def\om {\omega}
\def\Om {\Omega}
\def\sa {\sigma}
\def\vr {\varepsilon}
\def\va {\varphi}

\def\span{\hbox{span}}

\def\meas{\hbox{\it meas}}

\def\vol{\mathrm{vol}}
\def\real{\hbox{Re}}

\newcommand{\inp}[2]{\left\langle#1,#2\right\rangle}

\frenchspacing

\textwidth=16cm
\textheight=23cm
\parindent=16pt
\oddsidemargin=-0.5cm
\evensidemargin=-0.5cm
\topmargin=-0.5cm

\begin{document}

\baselineskip=17pt

\titlerunning{Blow-up profile for spinorial Yamabe type equation on  $S^m$}

\title{   \vspace{-3em}
Blow-up profile for spinorial Yamabe type equation on  $S^m$}

\author{Tian Xu\footnote{\noindent
 Supported by the National Science Foundation of China (NSFC 11601370, 11771325)
and the Alexander von
\newline \hspace*{1.5em}
Humboldt Foundation of Germany}  \vspace{-2em}
}

\date{}

\maketitle

\subjclass{Primary 53C27; Secondary 35R01}

\begin{abstract}
%For $m\geq2$, we study the existence of solutions for
%the equation $D\psi=Q(x)|\psi|^{\frac2{m-1}}\psi$
%on $(S^m,\ig_{S^m})$, where $Q$ is a
%$C^2$ positive function.
%We prove that the set of $Q$'s for which a solution exists
%is dense, in $C^1$-topology, in the space of positive
%bounded smooth functions. When $m=2$, we relate
%the zero sets of a solution with the genus of a
%Riemannian surface. As a consequence, a prescribed
%mean curvature embedding theorem of $S^2\to\R^3$
%is established.

Motivated by recent progress on a spinorial analogue of the Yamabe problem in the geometric literature, we study a conformally invariant spinor field equation on the $m$-sphere, $m\geq2$. Via variational methods, we study analytic aspects of the associated energy functional, culminating in a blow-up analysis.

\keywords{Dirac equations, Conformal geometry,
Blow-up}

\end{abstract}

% \tableofcontents

\section{Introduction}

Within the framework of Spin Geometry, a problem analogous to the Yamabe problem has received increasing attention in recent years.
Several works of Ammann \cite{Ammann, Ammann2009} and Ammann, Humbert and others \cite{AGHM, AHA, AHM} provide a brief picture of how variational method may be employed to the investigation. Their starting point was the Hijazi inequality \cite{Hij86} which links the first eigenvalue of two important elliptic differential operators: the conformal Laplacian and the Dirac operator.

Let $(M,\ig,\sa)$ be an $m$-dimensional closed spin manifold with a
metric $\ig$, a spin structure $\sa:P_{Spin}(M)\to P_{SO}(M)$ and
a spin representation $\rho: Spin(m)\to End(\mbs_m)$.
Let us denote by $\mbs(M)=P_{Spin}(M)\times_{\rho}\mbs_m$ the spinor bundle on $M$ and $D_\ig: C^\infty(M,\mbs(M))\to C^\infty(M,\mbs(M))$ the Dirac operator (see Section \ref{preliminaries} for more details). A spin conformal invariant is defined as
\begin{\equ}\label{BHL}
\lm_{min}^+(M,[\ig],\sa):=\inf_{\tilde\ig\in[\ig]}\lm_1^+(\tilde\ig)
\text{Vol}(M,\tilde\ig)^{\frac1m}
\end{\equ}
where $\lm_1^+(\tilde\ig)$ denotes the smallest positive eigenvalue of the Dirac operator $D_{\tilde\ig}$ with respect to the conformal metric $\tilde\ig\in[\ig]:=\big\{ f^2\ig:\, f\in C^{\infty}(M),\, f>0 \big\}$. Ammann points out in \cite{Ammann, Ammann2009} that studying critical metrics for this invariant involves similar analytic problems to those appearing in the Yamabe problem. It follows that finding a critical metric of \eqref{BHL} is  equivalent to prove the
existence of a spinor field $\psi\in C^{\infty}(M,\mbs(M))$ minimizing the functional defined by
\begin{\equ}\label{J-ig-functional}
J_\ig(\phi)=\frac{\Big(
\int_M|D_{\ig}\phi|^{\frac{2m}{m+1}}d\vol_{\ig}
\Big)^{\frac{m+1}{m}}}{\big|\int_M(D_{\ig}\phi,\phi)d\vol_{\ig}\big|}
\end{\equ}
with the Euler-Lagrange equation
\begin{\equ}\label{Dirac-c}
D_\ig\psi=\lm_{min}^+(M,[\ig],\sa) |\psi|^{\frac2{m-1}}\psi.
\end{\equ}

As was pointed out in \cite{Ammann}, standard variational method does not imply the existence of minimizers for $J_\ig$ directly. This is due to the  criticality of the non-quadratic term in \eqref{J-ig-functional}. Indeed, the exponent $\frac{2m}{m+1}$ is critical in the sense that Sobolev embedding involved is precisely the one for which the compactness is lost in the Reillich-Kondrakov theorem. Similar to the argument in solving the Yamabe problem, one might be able to find a criterion which recovers the compactness. And it is crucial to note that a spinorial analogue of Aubin's inequality holds (see \cite{AGHM})
\begin{\equ}\label{spinorial Aubin inequ}
\lm_{min}^+(M,[\ig],\sa)\leq \lm_{min}^+(S^m,[\ig_{S^m}],\sa_{S^m})=\frac m2\om_m^{\frac1m}
\end{\equ}
where $(S^m,\ig_{S^m},\sa_{S^m})$ is the $m$-dimensional sphere equipped with its canonical metric $\ig_{S^m}$ and its standard spin structure $\sa_{S^m}$, and $\om_m$ is the standard volume of $(S^m,\ig_{S^m})$. The criterion obtained in \cite{Ammann} shows that if inequality \eqref{spinorial Aubin inequ} is strict then the spinorial Yamabe problem \eqref{Dirac-c} has a nontrivial solution minimizing the functional $J_\ig$.

Tightly related to geometric data, the nonlinear problem \eqref{Dirac-c} provides a strong tool for showing the existence of constant mean curvature hypersurfaces in Euclidean spaces. This is one of the most attractive features of the spinorial Yamabe problem that unseals new researches in both PDE theory and Riemann geometry. In this paper, we are concerned with a more general form of \eqref{Dirac-c}:
\begin{\equ}\label{Dirac-general}
D_\ig\psi=H(\xi)|\psi|^{\frac2{m-1}}\psi \quad \text{on } M
\end{\equ}
where $H:M\to\R$ is a given function on $M$. As was observed in \cite{Friedrich, KS, Taimanov1, Taimanov2, Taimanov3}, the function
$H$ plays the role of the mean curvature for a conformal immersion $M\to\R^{m+1}$. Indeed, if there exists a conformal immersion of $M$ into $\R^{m+1}$ with mean curvature $H$, then from the restriction to $M$ of any parallel spinor on $\R^{m+1}$, one obtains a spinor field which satisfies the equation \eqref{Dirac-general}. The converse is also true if $m=2$. Particularly, if $M$ is a simply connected surface, i.e. a $2$-sphere, then for any non-trivial solution to \eqref{Dirac-general} there corresponds to a conformal immersion (possibly with branching points) $S^2\to\R^3$ with mean curvature $H$. This one-to-one  correspondence is the so-called \textit{Spinorial Weierstra\ss\ representation}, see \cite{Friedrich} for a nice explanation and \cite{KS} for a rich source of results on Dirac operators on compact surfaces.

Raulot \cite{Raulot} obtained an existence criterion for the problem \eqref{Dirac-general} which is similar to the Aubin's inequality. One of his results shows that if the Dirac operator $D_\ig$ is invertible, $H$ is positive and satisfies
\begin{\equ}\label{criterion}
\lm_{min,H}\cdot\big(\max_{\xi\in M} H\big)^{\frac{m-1}{m}}<\frac m2\om_m^{\frac1m},
 \quad \lm_{min,H}:=\inf_{\phi\neq0}\frac{\Big(
\int_M H^{-\frac{m-1}{m+1}}|D_{\ig}\phi|^{\frac{2m}{m+1}}d\vol_{\ig}
\Big)^{\frac{m+1}{m}}}{\big|\int_M(D_{\ig}\phi,\phi)d\vol_{\ig}\big|}
\end{\equ}
then there exists a solution to the equation
\[
D_\ig\psi=\lm_{min,H} H(\xi)|\psi|^{\frac2{m-1}}\psi
\quad \text{and} \quad
\int_MH(\xi)|\psi|^{\frac{2m}{m-1}}d\vol_\ig=1.
\]
However, condition \eqref{criterion} is only verified for some special cases and general existence result is still lacking (cf. \cite{AHM, Ginoux }). In particular, the existence results in \cite{Raulot} does not apply for $M=S^m$ since the strict inequality of \eqref{criterion} is not valid on the spheres in any circumstance.

A different point of view was introduced by Isobe, who suggested to consider the geometric property of the function $H$ for the spinorial Yamabe equation \eqref{Dirac-general}. In his paper \cite{Isobe-MathAnn}, Isobe established existence results for $H$ close to a constant function on $S^m$. The idea was to count Morse index at critical points of $H$ and to pose an index counting condition. Similar idea has been employed in the study of scalar curvature equations, see for instance Bahri and Coron \cite{BahriC} and Chang et al. \cite{CGY, CY}.

In this paper, we will also consider the spinorial Yamabe equation \eqref{Dirac-general} on the $m$-sphere, i.e. we take $M=S^m$, $m\geq2$.
We aim to provide an analytic foundation for Eq. \eqref{Dirac-general}
with general functions $H$ (the case $H$ is "far" from a constant function is allowed). Inspired by Yamabe \cite{Yamabe} and Aubin \cite{Aubin}, we start with basic points like the compactness of subcritical approximation argument. As was established in the Yamabe problem, the key analytical points are the singularities in solutions of subcritical approximation equations which can appear at isolated points. For Eq. \eqref{Dirac-general}, at those isolated singularities, rescaling produces an entire solution of the spinorial Yamabe equation in the Euclidean space which can be compactified to a spherical "bubble" . These entire solutions or "bubbles" can be viewed as obstructions to the compactness of Eq. \eqref{Dirac-general}. Hence, another important point will be the asymptotic behavior of such entire solutions. In this paper, we therefore perform such an analysis for the spinorial Yamabe equation. As in the classical cases, this provides a basic analytical picture. A general existence result and the geometric applications of Eq. \eqref{Dirac-general} will be established in a forthcoming paper.

The subcritical approximate point of view is to consider the problem of the form
\begin{\equ}\label{Dirac-eps}
D_\ig\psi=H(\xi)|\psi|^{\frac2{m-1}-\vr}\psi \quad \text{on } M
\end{\equ}
where $\vr>0$ is small. The above equation has a variational structure. In fact, $\psi\in C^1(M,\mbs(M))$ is a solution to Eq. \eqref{Dirac-eps} if and only if $\psi$ is a critical point of the functional
\begin{\equ}\label{functional-eps}
\cl_\vr(\psi):=\frac12\int_M(D_\ig\psi,\psi)d\vol_{\ig}
-\frac1{2^*-\vr}\int_M H(\xi)|\psi|^{2^*-\vr}
d\vol_{\ig}
\end{\equ}
where $2^*:=\frac{2m}{m-1}$. Assume that $\{\psi_\vr\}$ is a sequence
of solutions of \eqref{Dirac-eps} with
$\cl_\vr(\psi_\vr)\leq C_0$
for some positive constant $C_0$, Raulot's condition \eqref{criterion} can be interpreted as (via the dual variational principle) if $C_0<C_{crit}:=\frac1{2m(\max H)^{m-1}}\big(\frac m2\big)^m\om_m$ then $\{\psi_\vr\}$ admits a subsequence converging to a smooth solution of \eqref{Dirac-general}.

When $C_0$ is big (i.e. the condition \eqref{criterion} fails), the singularities or the so-called "blow-up" phenomenon may occur. To simplify the presentations of our main results, we set $M=S^m$ in \eqref{Dirac-eps} and \eqref{functional-eps} and denote $Crit[H]=\{\xi\in S^m:\, \nabla H(\xi)=0\}$ for a fixed function $H\in C^1(S^m)$ and $H>0$. Assume that $\{\psi_\vr\}$ is a sequence of solutions of \eqref{Dirac-eps} and satisfying
\begin{\equ}\label{energy-condition}
C_{crit} \leq \cl_\vr(\psi_\vr) \leq 2C_{crit}-\theta
\end{\equ}
for some small constant $\theta>0$. Define
\[
\Sigma=\big\{a\in S^m:\, \text{there exists } a_\vr\to a \text{ such that } |\psi_\vr(a_\vr)|\to+\infty \text{ as } \vr\to0\big\}.
 \]
Then we show that $\Sigma\subset Crit[H]$ and $\{\psi_\vr\}$ admits a subsequence, still denoted by $\{\psi_\vr\}$, satisfying one of the following alternatives
\begin{itemize}
\item[(1)] (compactness) $\Sigma=\emptyset$, $\{\psi_\vr\}$ is compact in $C^1(S^m,\mbs(S^m))$ and converges to a solution $\psi$ of Eq. \eqref{Dirac-general} as $\vr\to0$;

\item[(2)] (concentration) $\Sigma\neq \emptyset$ contains a single point and $|\psi_\vr|\to0$ as $\vr\to0$ uniformly on compact subsets of $S^m\setminus \Sigma$.
\end{itemize}
The conclusion described above is only a rough summation of the results in Proposition \ref{alternative prop} and \ref{blow-up prop}\,-\,\ref{blow-up cor}.

Observe that condition \eqref{energy-condition} performs a key ingredient in the blow-up profile, as it exhibits a unique
microscopic bubbling pattern. By virtue of Raulot's result, it can be seen that $C_{crit}$ is the threshold for bubbling. The alternative blow-up profile obtained in the present paper can be viewed as a local version of concentration-compactness type result above the bubbling threshold, i.e. in the range $[C_{crit},2C_{crit})$.
A geometric motivation lying behind our interest in this $2C_{crit}$ upper bound is, in dimension 2, it would give an upper bound estimate for the Willmore energy provided that one can rule out the formation of bubbles. Indeed,
in the setting of Spinorial Weierstra\ss\ representation, if $\psi\in C^1(S^2,\mbs(S^2))$ is a nontrivial solution of \eqref{Dirac-general}
then $(S^2, |\psi|^4\ig_{S^2})$ can be isometrically immersed into $\R^3$ with mean curvature being prescribed as $H$. Then the Willmore energy of such an immersion is defined by
\[
\cw(\psi)=\int_{S^2}H^2(\xi)d\vol_{|\psi|^4\ig_{S^2}}
=\int_{S^2}H^2(\xi)|\psi|^4d\vol_{\ig_{S^2}}.
\]
Once the blow-up is ruled out, \eqref{energy-condition} and the compactness of the subcritical approximating sequence $\{\psi_\vr\}$ gives a solution $\psi$ to Eq. \eqref{Dirac-general} such that
\[
\cw(\psi)\leq \max_{\xi\in S^2} H\int_{S^2}H(\xi)|\psi|^4d\vol_{\ig_{S^2}}
=\max_{\xi\in S^2} H\cdot\lim_{\vr\to0}\Big(4\cl_\vr(\psi_\vr)-2\cl_\vr'(\psi_\vr)[\psi_\vr]\Big)
< 8 \pi.
\]
Therefore, by Li and Yau's inequality \cite[Theorem 6]{LY}, we obtain the immersion of $(S^2,|\psi|^4\ig_{S^2})$ into $\R^3$ has no self-intersection, i.e. it is a global embedding. This will be of particular geometric interests and will appear in our future attempt on the existence issues.

The paper is organized as follows: Section \ref{preliminaries} contains the basic geometric backgrounds and functional settings; in Section \ref{perturbation sec} the continuity of the energy functionals with respect to the subcritical approximation is established; Section \ref{Blow-up section} contains the detailed proofs of the alternative blow-up profile.

%\begin{\equ}\label{functional}
%\cl(\psi):=\frac12\int_{S^m}(D\psi,\psi)d\vol_{\ig_{S^m}}
%-\frac1{2^*}\int_{S^m} H(\xi)|\psi|^{2^*}
%d\vol_{\ig_{S^m}}, \quad
%\psi\in H^{\frac12}({S^m},\mbs({S^m}))
%\end{\equ}

\section{Preliminaries}\label{preliminaries}
\subsection{Spin structure and the Dirac operator}
Let $(M,\ig)$ be an $m$-dimensional Riemannian manifold
with a chosen orientation. Let $P_{SO}(M)$ be the set of
positively oriented orthonormal frames on $(M,\ig)$. This
is a $SO(m)$-principal bundle over $M$. A {\it spin structure}
on $M$ is a pair $\sa=(P_{Spin}(M),\vartheta)$ where $P_{Spin}(M)$
is a $Spin(m)$-principal bundle over $M$ and
$\vartheta: P_{Spin}(M)\to P_{SO}(M)$ is a map such that
\begin{displaymath}
\xymatrix@R=0.3cm{
P_{Spin}(M)\times Spin(m)  \ar[r]
\ar[dd]^{\displaystyle\vartheta\times \Theta}&
P_{Spin}(M)\ar[dd]^{\displaystyle\vartheta} \ar[dr] \\
&&  M \\
P_{SO}(M)\times SO(m) \ar[r] & P_{SO}(M)  \ar[ur] }
\end{displaymath}
commutes, where $\Theta: Spin(m)\to SO(m)$ is the
nontrivial double covering of $SO(m)$. There is a
topological condition for the existence of a spin
structure, that is, the vanishing of the second Stiefel-Whitney
class $\om_2(M)\in H^2(M,\Z_2)$.
Furthermore, if a spin structure exists, it need not to be
unique. For these, we refer to \cite{Friedrich, Lawson}.

To introduce the spinor bundle, we recall that the
Clifford algebra $Cl(\R^m)$ is the associative $\R$-algebra
with unit, generated by $\R^m$ satisfying the relation
$x\cdot y-y\cdot x=-2(x,y)$ for $x,y\in\R^m$ (here
$(\cdot,\cdot)$ is the Euclidean scalar product on
$\R^m$). It turns out that $Cl(\R^m)$ has a smallest
representation $\rho: Spin(m)\to End(\mbs_m)$
of dimension $\dim_\C(\mbs_m)=2^{[\frac{m}2]}$ such
that $\C l(\R^m):=Cl(\R^m)\otimes\C\cong End(\mbs_m)$
as $\C$-algebra. The spinor bundle is then defined as the
associated vector bundle
\[
\mbs(M):=P_{Spin}(M)\times_\rho\mbs_m.
\]
Note that the spinor bundle carries a natural Clifford
multiplication, a natural hermitian metric and a metric
connection induced from the Levi-Civita connection on
$TM$ (see \cite{Friedrich, Lawson}), this bundle
satisfies the axioms of Dirac bundle in the sense that
\begin{itemize}
\item[$(i)$] for any $x\in M$, $X,Y\in T_xM$ and
$\psi\in\mbs_x(M)$
\[
X\cdot Y\cdot \psi + Y\cdot X\cdot\psi
+2\ig(X,Y)\psi=0;
\]
\item[$(ii)$] for any $X\in T_xM$ and $\psi_1,\psi_2\in\mbs_x(M)$,
\[
(X\cdot \psi_1,\psi_2)=-(\psi_1,X\cdot\psi_2),
\]
where $(\cdot,\cdot)$
is the hermitian metric on $\mbs(M)$;
\item[$(iii)$] for any $X,Y\in\Ga(TM)$ and $\psi\in\Ga(\mbs(M))$,
\[
\nabla_X^\mbs(Y\cdot\psi)=(\nabla_XY)\cdot\psi+Y\cdot\nabla_X^\mbs\psi,
\]
where $\nabla^\mbs$ is the metric connection on $\mbs(M)$.
\end{itemize}
On the spinor bundle $\mbs(M)$, the Dirac operator
is then defined as the composition
\begin{displaymath}
\xymatrixcolsep{1.6pc}\xymatrix{
D:\Ga(\mbs(M)) \ar[r]^-{\nabla^\mbs} &
\Ga(T^*M\otimes \mbs(M)) \ar[r] &
\Ga(TM\otimes \mbs(M)) \ar[r]^-{\mathfrak{m}} &
\Ga(\mbs(M))}
\end{displaymath}
where $\mathfrak{m}$ denotes the Clifford multiplication
$\mathfrak{m}: X\otimes\psi\mapsto X\cdot\psi$.

The Dirac operator behaves very nicely under conformal
changes in the following sense (see \cite{Hij86, Hit74}):
\begin{Prop}\label{conformal formula}
Let $\ig_0$ and $\ig=f^2\ig_0$ be two conformal metrics on
a Riemannian spin manifold $M$. Then, there exists an
isomorphism of vector bundles $\iota:\, \mbs(M,\ig_0)\to
\mbs(M,\ig)$ which is a fiberwise isometry such that
\[
D_\ig\big( \iota(\psi) \big)=\iota\big( f^{-\frac{m+1}2}D_{\ig_0}
\big( f^{\frac{m-1}2}\psi \big)\big),
\]
where $\mbs(M,\ig_0)$ and $\mbs(M,\ig)$ are spinor
bundles on $M$ with respect to the metrics $\ig_0$ and
$\ig$, respectively, and $D_{\ig_0}$ and $D_\ig$ are
the associated Dirac operators.
\end{Prop}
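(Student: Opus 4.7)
The plan is to execute the three-step proof that goes back to Hitchin: first identify the spinor bundles via a fiberwise isometry $\iota$, then compute how the spin connection transforms under the conformal change $\ig=f^2\ig_0$, and finally Clifford-trace to recover the Dirac operator identity.

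First I would construct $\iota$. Given an $\ig_0$-orthonormal frame $(e_1,\dots,e_m)$ on an open set, the rescaled frame $(\ov e_1,\dots,\ov e_m)=(f^{-1}e_1,\dots,f^{-1}e_m)$ is $\ig$-orthonormal. This induces an $SO(m)$-equivariant map $P_{SO}(M,\ig_0)\to P_{SO}(M,\ig)$; since the two spin structures share the same topological obstruction, this lifts to a $Spin(m)$-equivariant map of principal bundles and, via the spin representation $\rho$, to the desired fiberwise isometry $\iota:\mbs(M,\ig_0)\to\mbs(M,\ig)$ preserving Hermitian products. Under $\iota$, Clifford multiplication transforms by the rule $\ov e_i\cdot_\ig\iota(\psi)=\iota(e_i\cdot_{\ig_0}\psi)$.

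Next I would derive the transformation formula for the spin connection. The starting point is the classical conformal-change identity for the Levi-Civita connections,
\[
\ov\nabla_X Y=\nabla_X Y+X(u)Y+Y(u)X-\ig_0(X,Y)\,\grad_{\ig_0}u,\qquad u=\log f,
\]
which follows directly from the Koszul formula. Reading off the local connection $1$-forms in the rescaled frame $(\ov e_i)$ and lifting via the local expression $\nabla^\mbs=d+\tfrac14\sum\om_{ij}\,e_i\cdot e_j$ for the spin connection, one obtains after careful bookkeeping
\[
\nabla^{\mbs,\ig}_X\iota(\psi)=\iota\Bigl(\nabla^{\mbs,\ig_0}_X\psi-\tfrac12\,X\cdot_{\ig_0}\grad_{\ig_0}u\cdot_{\ig_0}\psi-\tfrac12\,X(u)\,\psi\Bigr).
\]
I would then compute $D_\ig\iota(\psi)=\sum_i\ov e_i\cdot_\ig\nabla^{\mbs,\ig}_{\ov e_i}\iota(\psi)$, pull out the factor $f^{-1}$ coming from $\nabla^{\mbs,\ig}_{\ov e_i}=f^{-1}\nabla^{\mbs,\ig}_{e_i}$, and simplify using $\sum_i e_i\cdot_{\ig_0}e_i=-m$ and $\sum_i e_i(u)\,e_i=\grad_{\ig_0}u$. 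The result collapses to $f^{-1}\iota\bigl(D_{\ig_0}\psi+\tfrac{m-1}{2}\grad_{\ig_0}u\cdot_{\ig_0}\psi\bigr)$, which, via the Leibniz rule $D_{\ig_0}(f^{(m-1)/2}\psi)=f^{(m-1)/2}D_{\ig_0}\psi+\tfrac{m-1}{2}f^{(m-1)/2}\grad_{\ig_0}u\cdot_{\ig_0}\psi$, matches $\iota\bigl(f^{-(m+1)/2}D_{\ig_0}(f^{(m-1)/2}\psi)\bigr)$.

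The main obstacle I expect is the second step: passing from the Levi-Civita conformal formula to the explicit correction terms in the spin connection requires careful manipulation of the local connection forms under the frame rescaling $\ov e_i=f^{-1}e_i$, and one must verify that the three contributions $X(u)Y$, $Y(u)X$, and $-\ig_0(X,Y)\grad_{\ig_0}u$ reassemble cleanly into the compact Clifford expression $-\tfrac12(X\cdot\grad_{\ig_0}u\cdot\psi+X(u)\psi)$. Once this bookkeeping is secured, both the construction of $\iota$ and the final Clifford-trace simplification reduce to direct applications of the Clifford relations already stipulated in property $(i)$ of the Dirac bundle.
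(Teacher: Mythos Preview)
Your proposal is a correct outline of the classical Hitchin argument for the conformal covariance of the Dirac operator, and the computations you sketch are the standard ones. Note, however, that the paper does not supply a proof of this proposition at all: it simply states the result and refers the reader to \cite{Hij86, Hit74}. So there is no ``paper's own proof'' to compare against; your write-up would in fact furnish the details that the paper omits by citation.
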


\subsection{Bourguignon-Gauduchon trivialization}\label{BG sec}
In what follows, let us introduce briefly a local
trivialization of the spinor bundle $\mbs(M)$ constructed by
Bourguignon and Gauduchon \cite{BG}.

Let $a\in V\subset M$ be an arbitrary point and let
$(x_1,\dots,x_m)$
be the normal coordinates given by the exponential map
$\exp_a: U\subset T_aM\cong\R^m\to V$,
$x=(x_1,\dots, x_n)\mapsto y=\exp_ax$.
%Let $G(y)=(\ig_{ij}(y))_{ij}$ denote the metric
%matrix at $y\in V$,
%that is $\ig_{ij}(y)=\ig(\pa_i,\pa_j)$ where $\pa_i=\frac\pa{\pa_{x_i}}$.
%The coefficients $\ig_{ij}$ depend smoothly on $y$. Since
%$G(y)$ is symmetric and positive definite, let
%$B(y)=(b_{ij}(y))_{ij}$ be the square root of $G(y)^{-1}$.
%Then $B(y)$ is also symmetric and positive definite.
Then, we have
$(\exp_a)_{*y}: (T_{\exp_a^{-1}y}U\cong\R^m,\exp_a^*\ig_y)
\to(T_yM,\ig_y)$ is an isometry for each $y\in V$.
Thus, we can obtain an isomorphism of
$SO(m)$-principal bundles:
\begin{displaymath}
\xymatrix{
P_{SO}(U,\exp_a^*\ig)
\ar[rr]^{\quad\displaystyle(\exp_a)_*}  \ar[d] & &
P_{SO}(V,\ig) \ar[d]\\
U\subset T_aM \ar[rr]^{\ \ \displaystyle\exp_a}
& & V\subset M}
\end{displaymath}
where $(\exp_a)_*(\{z_1,\dots,z_m\})
=\big\{(\exp_a)_*z_1,\dots,(\exp_a)_*z_m\big\}$ for an
oriented frame $\{z_1,\dots,z_m\}$ on $U$.
Notice that $(\exp_a)_*$ commutes with the
right action of $SO(m)$, then we
infer that $(\exp_a)_*$ induces an isomorphism of spin
 structures:
\begin{displaymath}
\xymatrix{
Spin(m)\times U=P_{Spin}(U,\ig_{\R^m})
\ar[rr]^{\quad\displaystyle\ov{(\exp_a)}_*}  \ar[d]
& & P_{Spin}(V,\ig)
\subset P_{Spin}(M) \ar[d]\\
U\subset T_aM \ar[rr]^{\ \ \displaystyle\exp_a} &  &
V\subset M}
\end{displaymath}
Hence, we can obtain an isomorphism between the spinor
bundles $\mbs(U)$ and $\mbs(V)$ by
\[
\aligned
\mbs(U):=P_{Spin}(U,\ig_{\R^m})\times_\rho \mbs_m&\to
\mbs(V):=P_{Spin}(V,\ig)\times_\rho \mbs_m \subset
\mbs(M)\\
\psi=[s,\va]&\mapsto\bar\psi:=\big[\ov{(\exp_a)}_*(s),\va\big]
\endaligned
\]
where $(\rho,\mbs_m)$ is the complex spinor representation,
and $[s,\va]$ and
$\big[\ov{(\exp_a)}_*(s),\va\big]$ denote the
equivalence classes of
$(s,\va)\in P_{Spin}(U,\ig_{\R^m})\times \mbs_m$
and $\big(\ov{(\exp_a)}_*(s),\va\big)\in P_{Spin}(V,\ig)
\times \mbs_m$ under the action of $Spin(m)$, respectively.

For more background material on the
Bourguignon-Gauduchon trivialization we refer the reader to
\cite{AGHM, BG}

\subsection{$H^{\frac12}$-spinors on $S^m$}

Let us consider the case $M=S^m$ with the standard
metric $\ig_{S^m}$. Let $Spec(D)$ denote the spectrum
of the Dirac operator $D$. It
is well-known that $D$ is essentially self-adjoint in
$L^2(S^m,\mbs(S^m))$
and has compact resolvents
(see \cite{Friedrich, Lawson, Ginoux}).
Particularly, we have
\[
Spec(D)=\Big\{\pm\big(\frac{m}2+j\big):\, j=0,1,2,\dots\Big\}
\]
and, for each $j$, the eigenvalues $\pm(\frac{m}2+j)$ have the
multiplicity
\[
2^{[\frac{m}2]}
\begin{pmatrix}
n+j-1\\
j
\end{pmatrix}
\]
(see \cite{Sulanke}). For notation convenience, we can write
$Spec(D)=\{\lm_k\}_{k\in\Z\setminus\{0\}}$ with
\[
\lm_k=\frac{k}{|k|}\big(\frac{m}2+|k|-1\big).
\]
The eigenspaces of
$D$ form a complete orthonormal decomposition of
$L^{2}(S^m,\mbs(S^m))$, that is,
\[
L^{2}(S^m,\mbs(S^m))=\ov{\bigoplus_{\lm\in Spec(D)} \ker(D-\lm I)}.
\]

Now let us denote by $\{\eta_k\}_{k\in\Z\setminus\{0\}}$ the complete
orthonormal basis of $L^2(S^m,\mbs(S^m))$ consisting of
the smooth eigenspinors of the Dirac operator $D$, i.e.,
$D\eta_k=\lm_k\eta_k$. We then define the operator
$|D|^{\frac12}: L^2(S^m,\mbs(S^m))\to L^2(S^m,\mbs(S^m))$ by
\[
|D|^{\frac12}\psi=\sum_{k\in\Z\setminus\{0\}}|\lm_k|^{\frac12}
a_k\eta_k,
\]
where $\psi=\sum_{k\in\Z\setminus\{0\}}
a_k\eta_k\in L^2(S^m,\mbs(S^m))$.
Let us set
\[
H^{\frac12}(S^m,\mbs(S^m)):=\Big\{\psi=\sum_{k\in\Z} a_k\eta_k
\in L^2(S^m,\mbs(S^m)):\, \sum_{k\in\Z
\setminus\{0\}}|\lm_k||a_k|^2<\infty\Big\}.
\]
We have $H^{\frac12}(S^m,\mbs(S^m))$ coincides with the
Sobolev space $W^{\frac12,2}(S^m,\mbs(S^m))$
(see \cite{Adams, Ammann}).
We could now endow $H^{\frac12}(S^m,\mbs(S^m))$
with the inner product
\[
\inp{\psi}{\va}=\real
\big(|D|^{\frac12}\psi,|D|^{\frac12}\va\big)_2
\]
and the induced norm $\|\cdot\|
=\inp{\cdot}{\cdot}^{\frac12}$,
where $(\psi,\va)_2=\int_{S^m}
(\psi,\va)d\vol_{\ig_{S^m}}$ is the
$L^2$-inner product on spinors.
In particular, $E:=H^{\frac12}(S^m,\mbs(S^m))$
induces a splitting $E=E^+\op  E^-$ with
\begin{\equ}\label{decomposition1}
E^+:=\ov{\span\{\eta_k\}_{k>0}} \quad
\text{and} \quad
E^-:=\ov{\span\{\eta_k\}_{k<0}},
\end{\equ}
where the closure is taken in the $\|\cdot\|$-topology.
It is then clear that these are orthogonal subspaces
of $E$ on which the action
\[
\int_{S^m}(D\psi,\psi)d\vol_{\ig_{S^m}}
\]
is positive or negative.
In the sequel, with respect to this decomposition,
we will write $\psi=\psi^++\psi^-$ for any
$\psi\in E$.  The dual space of $E$ will be denoted by
$E^*:=H^{-\frac12}(S^m,\mbs(S^m))$.

%It is easy to check that the functional
%$\cl$ in \eqref{functional} is $C^2$ on $E$ and can be
%simply rewritten as
%\begin{\equ}\label{cl}
%\cl(\psi)=\frac12\big(\|\psi^+\|^2-\|\psi^-\|^2\big)
%-\frac1{2^*}\int_{S^m} H(\xi)|\psi|^{2^*}d\vol_{\ig_{S^m}}.
%\end{\equ}

\section{Perturbation from subcritical}\label{perturbation sec}
For $p\in(2,2^*]$, let us consider
\begin{\equ}\label{perturbed problem}
D\psi=H(\xi)|\psi|^{p-2}\psi  \quad \text{on } S^m
\end{\equ}
where $H\in C^1(S^m)$ and $H>0$.
The corresponding energy functional will be
\[
\aligned
\cl_p(\psi)&=\frac12\int_{S^m}(D\psi,\psi)
d\vol_{\ig_{S^m}}-\frac1{p}
\int_{S^m} H(\xi)|\psi|^{p}d\vol_{\ig_{S^m}}  \\
&=\frac12\big(\|\psi^+\|^2-\|\psi^-\|^2\big) - \frac1{p}
\int_{S^m} H(\xi)|\psi|^{p}d\vol_{\ig_{S^m}} .
\endaligned
\]
For $p=2^*$ we will simply use the notation $\cl$
instead of $\cl_{2^*}$.
For $p<2^*$, the problem is subcritical and, due to the
compact embedding of $E\hookrightarrow
L^p(S^m,\mbs(S^m))$, it is not
difficult to see that there always exist nontrivial weak solutions
of \eqref{perturbed problem}. So,
it remains to find conditions under which solutions
of the subcritical problem will converge to a nontrivial solution
of the critical problem.
In what follows, we will first study the behavior of the energy
functional $\cl_p$, $p\in(2,2^*]$.

For notation convenience, in the sequel,
by $L^p$ we denote the Banach
space $L^p(S^m,\mbs(S^m))$ for $p\geq1$
and by $|\cdot|_p$ we denote
the usual $L^p$-norm. We also denote $H_{max}=\max_{S^m} H$
and $H_{min}=\min_{S^m} H$.
And without loss of generality, we assume
$\int_{S^m}H(\xi)d\vol_{\ig_{S^m}}=1$.

\subsection{A reduction argument}\label{reduction sec}

Given $u\in E^+\setminus\{0\}$, we set
\[
W(u)=\span\{u\}\op E^-=\big\{
\psi\in E:\, \psi=tu+v,\, v\in  E^-,\, t\in\R \big\}.
\]

\begin{Lem}\label{anti-coercive}
For $u\in E^+\setminus\{0\}$, $\cl_p$
is anti-coercive on $W(u)$, that is,
\[
\cl_p(\psi)\to-\infty \text{ as } \|\psi\|\to\infty, \ \psi\in W(u).
\]
\end{Lem}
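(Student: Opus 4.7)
The plan is to argue by contradiction. Suppose $\cl_p$ is not anti-coercive on $W(u)$: there exist $\psi_n = t_n u + v_n \in W(u)$ with $R_n := \|\psi_n\| \to \infty$ and $\cl_p(\psi_n) \geq -M$ for some $M > 0$. Normalize by setting $w_n := \psi_n/R_n$ and writing $w_n = s_n u + z_n$ with $s_n := t_n/R_n$ and $z_n := v_n/R_n \in E^-$. Since $u \in E^+$ and $z_n \in E^-$ are $\inp{\cdot}{\cdot}$-orthogonal, the constraint $\|w_n\| = 1$ reads $s_n^2\|u\|^2 + \|z_n\|^2 = 1$, so both $\{s_n\}$ and $\{\|z_n\|\}$ are bounded. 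The functional rescales as
\[
\cl_p(\psi_n) = \frac{R_n^2}{2}\bigl(s_n^2\|u\|^2 - \|z_n\|^2\bigr) - \frac{R_n^p}{p}\int_{S^m} H(\xi)|w_n|^p \, d\vol_{\ig_{S^m}},
\]
and I analyze this according to the behavior of $\int_{S^m} H|w_n|^p \, d\vol_{\ig_{S^m}}$.

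If along a subsequence $\int_{S^m} H|w_n|^p\,d\vol_{\ig_{S^m}} \geq \delta > 0$, then since $p > 2$ the nonlinear term of order $R_n^p$ overwhelms the bounded quadratic part of order $R_n^2$, giving $\cl_p(\psi_n) \leq \frac{R_n^2}{2}\|u\|^2 - \frac{\delta R_n^p}{p} \to -\infty$, a contradiction.

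In the complementary case $\int_{S^m} H|w_n|^p\,d\vol_{\ig_{S^m}} \to 0$; the hypothesis $H \geq H_{min} > 0$ upgrades this to $|w_n|_p \to 0$. Pass to a subsequence with $s_n \to s$ in $\R$ and $z_n \rightharpoonup z$ weakly in $E$. Mazur's theorem (applied to the closed subspace $E^-$) gives $z \in E^-$, and since the embedding $E \hookrightarrow L^p$ is continuous for every $p \in (2, 2^*]$, it is weakly continuous, so $w_n \rightharpoonup su + z$ in $L^p$. But $w_n \to 0$ strongly in $L^p$, so $su + z = 0$ in $L^p$; injectivity of the embedding then yields $su + z = 0$ in $E$, and the orthogonal decomposition $E = E^+\op E^-$ together with $u \neq 0$ forces $s = 0$ and $z = 0$. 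Consequently $s_n^2\|u\|^2 \to 0$, and the constraint $s_n^2\|u\|^2 + \|z_n\|^2 = 1$ yields $\|z_n\|^2 \to 1$. Since the nonlinear term is nonpositive, $\cl_p(\psi_n) \leq \frac{R_n^2}{2}(s_n^2\|u\|^2 - \|z_n\|^2) = \frac{R_n^2}{2}(-1+o(1)) \to -\infty$, again contradicting the assumption.

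The main subtlety is the critical exponent $p = 2^*$, where $E \hookrightarrow L^{2^*}$ is continuous but not compact, so one cannot extract a strongly $L^p$-convergent subsequence from $\{w_n\}$ by soft arguments. The approach sidesteps this: the strong $L^p$-decay of $w_n$ is handed to us for free by the hypothesis $\int H|w_n|^p \to 0$ together with $H \geq H_{min} > 0$, while the identification of the weak limit $su + z = 0$ uses only weak continuity of the embedding and the $E$-orthogonality of $E^+$ and $E^-$, both of which survive at the critical exponent.
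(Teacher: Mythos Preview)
Your proof is correct. It differs from the paper's argument, which is direct: the paper asserts an $L^p$-angle inequality $|tu+v|_p \geq C_p\bigl(|tu|_p+|v|_p\bigr)$ for all $t\in\R$, $v\in E^-$, and then bounds
\[
\cl_p(tu+v)\leq \frac{t^2}{2}\Big(\|u\|^2-\frac{2H_{min}}{p}C\,|t|^{p-2}|u|_p^p\Big)-\frac12\|v\|^2,
\]
from which anti-coercivity is immediate. Your contradiction--rescaling argument is longer but more self-contained: the paper's key inequality amounts to the assertion that $u\in E^+\setminus\{0\}$ does not lie in the $L^p$-closure of $E^-$, which is stated without justification there, whereas your Case~2 analysis effectively supplies a proof of this fact via weak limits and the orthogonal splitting $E=E^+\oplus E^-$. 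The paper's route is shorter once that inequality is granted; yours trades brevity for not having to invoke it.
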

\begin{proof}
To begin with, for $\psi\in W(u)$, let us write $\psi=tu+v$
with $v\in E^-$. We may then fix a constant $C_p>0$ such that
\[
|\psi|_p\geq C_p\big(|tu|_p+|v|_p\big)
\text{ for all } \psi\in W(u).
\]
Therefore, we can infer
\[
\aligned
\cl_p(\psi)&\leq \frac12\big( \|\psi^+\|^2-\|\psi^-\|^2\big)
-\frac{H_{min}}p|\psi|_p^p \\
&\leq \frac{t^2}2\Big( \|u\|^2-\frac{2H_{min}}p\cdot C\cdot t^{p-2}
|u|_p^p \Big)-\frac12\|v\|^2.
\endaligned
\]
And thus the conclusion follows.
\end{proof}

For a fixed $u\in E^+\setminus\{0\}$, we define
$\phi_u:  E^-\to\R$ by
\[
\phi_u(v):=\cl_p(u+v)=\frac12\int_{S^m}
(D(u+v),u+v)d\vol_{\ig_{S^m}}
-\frac1p\int_{S^m} H(x)|u+v|^pd\vol_{\ig_{S^m}}.
\]
From the convexity of the map
$\psi\mapsto \int_{S^m} H(x)|\psi|^pd\vol_{\ig_{S^m}}$,
it can be straightforwardly verified that
\begin{\equ}\label{concave}
\phi_u''(v)[w,w]\leq\int_{S^m}(Dw,w)d\vol_{\ig_{S^m}}
-\int_{S^m} H(x)|u+v|^{p-2}
\cdot|w|^2 d\vol_{\ig_{S^m}}\leq-\|w\|^2
\end{\equ}
for all $v,w\in  E^-$. This suggests that $\phi_u$ is concave.
Moreover, we have

\begin{Prop}\label{reduction}
There exists a $C^1$ map $h_p: E^+\to E^-$ such that
\[
\|h_p(u)\|^2\leq\frac2p\int_{S^m}H(x)|u|^p
d\vol_{\ig_{S^m}}
\]
and
\begin{\equ}\label{hlm}
\cl_p'(u+h_p(u))[v]=0 \quad \forall v\in E^-.
\end{\equ}
Furthermore, if denoted by
$I_p(u)=\cl_p(u+h_p(u))$, the function
$t\mapsto I_p(tu)$ is $C^2$ and, for
$u\in E^+\setminus\{0\}$ and $t>0$,
\begin{\equ}\label{it}
I'_p(tu)[u]=0 \,\Rightarrow\, I_p''(tu)[u,u]<0.
\end{\equ}
\end{Prop}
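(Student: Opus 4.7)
I would prove the proposition in four stages: existence of $h_p$ with the norm bound, $C^1$-regularity of $h_p$, $C^2$-regularity of $t \mapsto I_p(tu)$, and the strict concavity condition \eqref{it}.

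For existence, fix $u \in E^+\setminus\{0\}$ and view $\phi_u(v) := \cl_p(u+v)$ as a functional on $E^-$. By \eqref{concave}, $\phi_u''(v)[w,w] \leq -\|w\|^2$, so $\phi_u$ is strictly concave; by Lemma \ref{anti-coercive} it is anti-coercive on $u + E^-$. Hence $\phi_u$ attains a unique global maximum at some $h_p(u) \in E^-$, characterized by \eqref{hlm}. The norm bound follows by comparing $\phi_u(h_p(u)) \geq \phi_u(0)$ and dropping the non-negative term $\int_{S^m}H(\xi)|u+h_p(u)|^p \, d\vol_{\ig_{S^m}}$. For the $C^1$-regularity, I apply the implicit function theorem to $F:E^+ \times E^- \to (E^-)^*$ given by $F(u,v)[z] := \cl_p'(u+v)[z]$; the partial derivative $\partial_v F(u,v)$ corresponds to $\phi_u''(v)|_{E^-\times E^-}$, a coercive isomorphism by \eqref{concave}. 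The $C^2$-regularity of $t \mapsto I_p(tu)$ then follows from an envelope identity: writing $\eta(t) := tu + h_p(tu)$, the vanishing $\cl_p'(\eta(t))|_{E^-} = 0$ absorbs the $h_p'$-contribution, giving $\frac{d}{dt}I_p(tu) = \cl_p'(\eta(t))[u] = t\|u\|^2 - \int_{S^m}H(\xi)|\eta(t)|^{p-2}(\eta(t),u)\,d\vol_{\ig_{S^m}}$, which is $C^1$ in $t$ since $\eta$ is $C^1$ and $\cl_p$ is $C^2$ on $E$ (as $p > 2$).

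For the strict concavity condition at a critical point $t_0 > 0$, set $\eta_0 := \eta(t_0)$, $v_0 := h_p(t_0 u)$ and $w_0 := \frac{d}{dt}h_p(tu)\bigr|_{t_0} \in E^-$. Differentiating \eqref{hlm} in $t$ yields the key orthogonality $\cl_p''(\eta_0)[u+w_0, z] = 0$ for all $z \in E^-$; in particular $\cl_p''(\eta_0)[u+w_0, w_0] = 0$, and differentiating the envelope identity once more gives $I_p''(t_0 u)[u,u] = \cl_p''(\eta_0)[u+w_0, u+w_0]$. Since $\eta_0$ is a critical point of $\cl_p|_{W(u)}$, one has $(D\eta_0,\eta_0)_2 = \int_{S^m} H(\xi)|\eta_0|^p \, d\vol_{\ig_{S^m}} =: B$, which forces
\[
\cl_p''(\eta_0)[\eta_0, \phi] = -(p-2)\int_{S^m} H(\xi)|\eta_0|^{p-2}(\eta_0,\phi)\,d\vol_{\ig_{S^m}} \qquad \forall \phi \in W(u).
\]
Decomposing $\eta_0 = t_0(u+w_0) + (v_0 - t_0 w_0)$, whose cross term vanishes since $v_0 - t_0 w_0 \in E^-$ is $\cl_p''(\eta_0)$-orthogonal to $u + w_0$, applying the identity above with $\phi = u + w_0$, and using $\cl_p'(\eta_0)[u+w_0] = 0$, I arrive at the clean formula
\[
I_p''(t_0 u)[u,u] = -\frac{p-2}{t_0}\bigl(D\eta_0, u+w_0\bigr)_2 = -\frac{p-2}{t_0}\bigl(t_0\|u\|^2 - \langle v_0, w_0\rangle\bigr).
\]
The strict negativity of $I_p''(t_0 u)[u,u]$ thus reduces to the sharp bound $t_0\|u\|^2 > \langle v_0, w_0\rangle$, which I would establish by testing the defining equation of $w_0$ against $z = v_0$ and exploiting the strict super-quadratic behaviour of $\psi \mapsto \int H|\psi|^p$ when $p>2$.

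The hardest step is this last inequality. The naive comparison $I_p(tu) \geq \cl_p\bigl((t/t_0)\eta_0\bigr)$, which holds with equality at $t = t_0$ since $(t/t_0)v_0 \in E^-$ is a competitor for the maximum defining $h_p$, only yields the weak lower bound $I_p''(t_0 u)[u,u] \geq -(p-2)B/t_0^2$, which is negative but insufficient. Establishing strict negativity requires quantitative control on $\cl_p''(\eta_0)[v_0 - t_0 w_0, v_0 - t_0 w_0]$ to prevent it from over-compensating the $-(p-2)B$ term in the decomposition above; this is the core analytic difficulty, and I expect its resolution to rely essentially on the strict convexity of the superlinear nonlinearity.
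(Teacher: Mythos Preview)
Your treatment of existence, the norm bound, $C^1$-regularity via the implicit function theorem, and $C^2$-regularity of $t\mapsto I_p(tu)$ is essentially the same as the paper's. The gap is in the proof of \eqref{it}: you correctly derive the identity
\[
I_p''(t_0u)[u,u]=-\frac{p-2}{t_0}\big(t_0\|u\|^2-\langle v_0,w_0\rangle\big),
\]
but you do not actually prove $t_0\|u\|^2>\langle v_0,w_0\rangle$; you only indicate that it ``should'' follow from strict convexity. Your final paragraph confirms this: the comparison $I_p(tu)\geq\cl_p\big((t/t_0)\eta_0\big)$ gives a \emph{lower} bound on $I_p''(t_0u)[u,u]$, which says nothing about strict negativity, and you leave the needed upper bound open.

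The paper closes this with a different, more efficient decomposition. Instead of writing $\eta_0$ in terms of the direction $u+w_0$, write the direction in terms of $\eta_0$: with $z:=u+h_p(u)$ and $w:=h_p'(u)[u]-h_p(u)\in E^-$ one has $u+h_p'(u)[u]=z+w$, hence
\[
I_p''(u)[u,u]=\cl_p''(z)[z+w,z+w]
=\cl_p''(z)[z,z]+2\cl_p''(z)[z,w]+\cl_p''(z)[w,w].
\]
Now $\cl_p''(z)[z,z]=\cl_p'(z)[z]-(p-2)\int_{S^m}H|z|^p=I_p'(u)[u]-(p-2)\int_{S^m}H|z|^p$, the term $\cl_p''(z)[w,w]$ is $\leq -\|w\|^2$ since $w\in E^-$, and a pointwise Young-type inequality absorbs the cross term into the other two, yielding
\[
I_p''(u)[u,u]\leq I_p'(u)[u]-\frac{p-2}{p-1}\int_{S^m}H|z|^p-\|w\|^2.
\]
Since $u\neq0$ forces $z\neq0$, the right-hand side is strictly negative whenever $I_p'(u)[u]=0$. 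In your variables this immediately gives $t_0\|u\|^2-\langle v_0,w_0\rangle\geq\frac{1}{(p-1)t_0}\int_{S^m}H|\eta_0|^p>0$, which is exactly the inequality you were missing. The moral: decompose the \emph{direction} as $z+w$ rather than decomposing $\eta_0$; this makes the diagonal piece $\cl_p''(z)[z,z]$ produce $I_p'(u)[u]$ plus a manifestly negative remainder, and no delicate sign analysis is needed.
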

\begin{proof}
We sketch the proof as follows.
First of all, by Lemma \ref{anti-coercive}, we have
$\lim_{\|v\|\to\infty}\phi_u(v)=-\infty$ which implies
$\phi_u$ is anti-coercive. Then it follows from
\eqref{concave} and the weak sequential upper
semi-continuity of $\phi_u$ that there exists a unique strict
 maximum point $h_p(u)$ for $\phi_u$,
which is also the only critical point of $\phi_u$ on
$E_\lm^-$.

Notice that
\[
\aligned
0&\leq\cl_p(u+h_p(u))-\cl_p(u)\\
&=\frac12\big( \|u\|^2-\|h_p(u)\|^2 \big)
-\frac1p\int_{S^m}H(\xi)|u+h_p(u)|^pd\vol_{\ig_{S^m}}
-\frac12\|u\|^2\\
&\qquad+\frac1p\int_{S^m}H(\xi)|u|^p
d\vol_{\ig_{S^m}},
\endaligned
\]
we therefore have
\[
\|h_p(u)\|^2\leq\frac2p\int_{S^m}H(x)|u|^p
d\vol_{\ig_{S^m}}.
\]

We define $\bt: E^+\times E^-\to E^-$ by
\[
\bt(u,v)=\phi_u'(v)=\cl_p'(u+v)\big|_{E^-},
\]
where we have identified $E^-$ with its dual space.
Observe that, for every $u\in E^+$, we have
\[
\phi_u'(h_p(u))[w]=\cl_p'(u+h_p(u))[w]=0, \quad
\forall w\in E^-.
\]
This implies $\bt(u,h_p(u))=0$ for all $u\in E^+$.
Notice that $\pa_v\bt(u,h_p(u))=\phi_u''(h_p(u))$ is a
bilinear form on $E^-$ which is bounded and anti-coercive.
Hence $\pa_v\bt(u,h_p(u))$ is an isomorphism. And
therefore, by the implicit function theorem, we can infer
that the uniquely determined map $h_p: E^+\to E^-$
is of $C^1$ smooth with its derivative given by
\begin{\equ}\label{h-derivative}
h_p'(u)=-\pa_v\bt(u,h_p(u))^{-1}\circ\pa_u\bt(u,h_p(u)),
\end{\equ}
this completes the proof of the first statement.

To prove \eqref{it}, it sufficient to show that
\begin{\equ}\label{I-equivalent}
\textit{If } u\in E^+\setminus\{0\} \textit{ satisfies }
I_p'(u)[u]=0, \textit{ then } I_p''(u)[u,u]<0.
\end{\equ}
Now, for simplicity,  let us denote
$\cg_p: E\to\R$ by $\cg_p(\psi)=\frac1p
\int_{S^m}H(\xi)|\psi|^pd\vol_{\ig_{S^m}}$
and set $z=u+h_p(u)$ and $w=h_p'(u)[u]-h_p(u)$.
By using $\cl_p'(u+h_p(u))|_{E^-}\equiv0$, we have
\eqref{I-equivalent} is a consequence of the following computation
\begin{eqnarray}\label{eee1}
I_p''(u)[u,u]&=&\cl_p''(z)[u+h_p'(u)[u],u]=\cl_p''(z)[z+w,z+w]  \nonumber\\
&=&\cl_p''(z)[z,z]+2\cl_p''(z)[z,w]
+\cl_p''(z)[w,w]  \nonumber\\
&=&I_p'(u)[u]+\big(\cg_p'(z)[z]-\cg_p''(z)[z,z]\big)
+2\big(\cg_p'(z)[w]-\cg_p''(z)[z,w] \big) \nonumber\\
& &\qquad -\cg_p''(z)[w,w]
+\int_{S^m}(Dw,w)d\vol_{\ig_{S^m}} \nonumber\\
& \leq& I_p'(u)[u]-\frac{p-2}{p-1}\int_{S^m}H(\xi)|z|^p
 d\vol_{\ig_{S^m}}-\|w\|^2.
\end{eqnarray}
%
%.
%Then it follows directly that, for any $z\in E\setminus\{0\}$
%and $w\in E$,
%\begin{eqnarray}\label{tech1}
%&&\big( \cg_p''(z)[z,z]-\cg_p'(z)[z] \big)
%+2\big( \cg_p''(z)[z,w]-\cg_p'(z)[w] \big)
%+\cg_p''(z)[w,w]   \nonumber\\
%&&\qquad\geq \, \int_{S^m}Q(\xi)
%|z|^{p-2}|w|^2d\vol_{\ig_{S^m}}
%+ (p-2)\int_{S^m}Q(\xi)|z|^{p-2} \bigg( |z| +
%\frac{\real (z ,w)}{|z|} \bigg)^2d\vol_{\ig_{S^m}}
%\nonumber\\
%&&\qquad\geq \,\frac{p-2}{p-1}\int_{S^m}Q(\xi)|z|^p
% d\vol_{\ig_{S^m}}\,.
%\end{eqnarray}
%
%Recall that $h_p:E^+\to E^-$ is $C^1$ then,
%on account of \eqref{hlm}, we can take the derivative with
%respect to $u$ to deduce that
%\begin{\equ}\label{sec-derivative2}
%\cl_p''(u+h_p(u))[u+h_p'(u)u,v]=0 \quad
%\forall v\in E^-.
%\end{\equ}
%Here and subsequently, to shorten notation, let us denote by
%$z=u+h_p(u)$ and $w=h_p'(u)u-h_p(u)$.
%Remark that, by \eqref{hlm}, we have
%$I_p'(u)[u]=\cl_p'(u+h_p(u))[u]$, hence
%\[
%I_p''(u)[u,u]=\cl_p''(z)[z+w,u].
%\]
%And this together with \eqref{sec-derivative2} implies
%$I_p''(u)[u,u]=\cl_p''(z)[z+w,z+w]$,
%where we have substituted
%$v=h_p'(u)u$ into \eqref{sec-derivative2}. Therefore,
%by \eqref{tech1}, we have
%\begin{eqnarray}\label{eee1}
%I_p''(u)[u,u]&=&\cl_p''(z)[z,z]+2\cl_p''(z)[z,w]
%+\cl_p''(z)[w,w]  \nonumber\\
%&=&I_p'(u)[u]+\big(\cg_p'(z)[z]-\cg_p''(z)[z,z]\big)
%+2\big(\cg_p'(z)[w]-\cg_p''(z)[z,w] \big) \nonumber\\
%& &\qquad -\cg_p''(z)[w,w]
%+\int_{S^m}(Dw,w)d\vol_{\ig_{S^m}} \nonumber\\
%& \leq& I_p'(u)[u]-\frac{p-2}{p-1}\int_{S^m}Q(\xi)|z|^p
% d\vol_{\ig_{S^m}}-\|w\|^2,
%\end{eqnarray}
%which proves \eqref{I-equivalent}.
\end{proof}

In the sequel, we shall call $(h_p,I_p)$
the reduction couple for $\cl_p$ on $E^+$.
It is all clear that critical points of $I_p$ and $\cl_p$ are
in one-to-one correspondence via the injective map
$u\mapsto u+h_p(u)$ from $E^+$ to $E$.
Let us set
\begin{\equ}\label{nehari}
\msn_p=\big\{u\in E^+\setminus\{0\}:\,
I_p'(u)[u]=0 \big\}.
\end{\equ}
By Proposition \ref{reduction}, we have $\msn_p$
is a smooth manifold of codimension $1$ in $E^+$
and is a natural constraint for the problem of finding non-trivial
critical points of $I_p$.
Furthermore, the function $t\mapsto I_p(tu)$
attains its unique critical point at $t=  t(u)>0$ (such that
$t(u)u\in\msn_p$) which realizes its maximum. And
at this point, we have
\[
\max_{t>0}I_p(tu)=\max_{\psi\in W(u)}\cl_p(\psi).
\]

\subsection{Continuity with respect to the perturbation}
Now we will work with a convergent sequence
$\{p_n\}$ in $(2,2^*]$ and we allow the case
$p_n\equiv p$. The main result here
is the following:
\begin{Prop}\label{continuity prop}
Let $p_n\to p$ in $(2,2^*]$ as $n\to\infty$ and
$c_1,c_2>0$.
For any $\theta>0$, there exists $\al>0$ such
that for all large $n$ and $\psi\in E$ satisfying
\[
c_1\leq\cl_{p_n}(\psi)\leq c_2 \quad \text{and} \quad
\|\cl_{p_n}'(\psi)\|_{E^*}\leq\al
\]
we have
\[
\max_{t>0}I_{p_n}(t\psi^+)\leq \cl_{p_n}(\psi)+\theta.
\]
%Particularly, $\theta\to0$, we also have
%$\al\leq O(\theta^{\frac12})$.
\end{Prop}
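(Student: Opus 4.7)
The plan is to argue by contradiction: assuming the conclusion fails produces sequences $\alpha_n \downarrow 0$, $p_n \to p$ in $(2, 2^*]$, and $\psi_n \in E$ with $c_1 \leq \cl_{p_n}(\psi_n) \leq c_2$, $\|\cl_{p_n}'(\psi_n)\|_{E^*} \leq \alpha_n$, yet $\max_{t>0} I_{p_n}(t\psi_n^+) > \cl_{p_n}(\psi_n) + \theta_0$ for some fixed $\theta_0 > 0$. Standard application of the identity $\cl_{p_n}(\psi_n) - \tfrac{1}{2}\cl_{p_n}'(\psi_n)[\psi_n] = (\tfrac{1}{2} - \tfrac{1}{p_n})\int_{S^m} H|\psi_n|^{p_n}\, d\vol_{\ig_{S^m}}$ and the estimates $|\cl_{p_n}'(\psi_n)[\psi_n^{\pm}]| \leq \alpha_n \|\psi_n^{\pm}\|$, together with the Sobolev embedding $E\hookrightarrow L^{p_n}$ (with constant uniform in $p_n$ since $p_n \leq 2^*$), give uniform boundedness of $\{\psi_n\}$ in $E$; the inequality $\cl_{p_n}(\psi_n) \leq \tfrac12\|\psi_n^+\|^2$ yields the lower bound $\|\psi_n^+\| \geq \sqrt{2c_1}$.

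Next I would exploit the $1$-strong concavity of $\phi_{\psi_n^+}$ on $E^-$ recorded in \eqref{concave}. Since $\phi_{\psi_n^+}'(\psi_n^-)|_{E^-} = \cl_{p_n}'(\psi_n)|_{E^-}$ has operator norm at most $\alpha_n$, the standard inequality for $1$-strongly concave functionals gives both $\|\psi_n^- - h_{p_n}(\psi_n^+)\| \leq \alpha_n$ and
\[
0 \leq I_{p_n}(\psi_n^+) - \cl_{p_n}(\psi_n) \leq \tfrac{\alpha_n^2}{2}.
\]
To transfer the near-criticality of $\psi_n$ into near-criticality of $\psi_n^+$ with respect to the reduced functional, I would write $I_{p_n}'(\psi_n^+)[\psi_n^+] = \cl_{p_n}'(\psi_n^+ + h_{p_n}(\psi_n^+))[\psi_n^+]$ and compare with $\cl_{p_n}'(\psi_n)[\psi_n^+]$; a H\"older inequality combined with the local Lipschitz control of $\phi \mapsto |\phi|^{p_n-2}\phi$ and the bound $\|\psi_n^- - h_{p_n}(\psi_n^+)\| \leq \alpha_n$ passed through the $L^{p_n}$-Sobolev embedding yields $|I_{p_n}'(\psi_n^+)[\psi_n^+]| = O(\alpha_n)$, i.e.\ $f_n'(1) \to 0$ where $f_n(t) := I_{p_n}(t\psi_n^+)$.

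The remaining step is to analyze the Nehari scaling. Let $t_n>0$ be the unique positive critical point of $f_n$, so $t_n\psi_n^+ \in \msn_{p_n}$; anti-coercivity of $\cl_{p_n}$ on $W(\psi_n^+)$ together with $\|\psi_n^+\| \geq \sqrt{2c_1}$ yields $T_0 \leq t_n \leq T_1$ uniformly. The contradiction hypothesis forces $f_n(t_n) \geq c_1 + \theta_0$, and the estimate \eqref{eee1} applied at $u = t_n\psi_n^+$ (where $I_{p_n}'(u)[u] = 0$), combined with the Pohozaev-type identity $f_n(t_n) = (\tfrac12 - \tfrac1{p_n})\int_{S^m} H|t_n\psi_n^+ + h_{p_n}(t_n\psi_n^+)|^{p_n}\,d\vol_{\ig_{S^m}}$, gives uniform strict concavity at the maximum,
\[
f_n''(t_n) \leq -\frac{2p_n}{(p_n-1)t_n^2}\,f_n(t_n) \leq -C_0 < 0.
\]
Combining this with $f_n'(1) \to 0$, $f_n'(t_n) = 0$, and uniform $C^2$ bounds for $f_n$ on bounded intervals (from the uniform $E$-bounds on $\psi_n^+$ and $h_{p_n}(t\psi_n^+)$), I would force $t_n \to 1$, after which $f_n(t_n) - f_n(1) = \int_1^{t_n}f_n'(s)\,ds \to 0$ together with the reduction estimate above completes the contradiction. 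The hardest step will be extracting $t_n \to 1$: the pointwise bound $f_n''(t_n) \leq -C_0$ must be promoted to a uniform neighborhood of $t_n$, which in the critical case $p = 2^*$ (where compactness of $E \hookrightarrow L^{2^*}$ fails) requires uniform equicontinuity of $f_n''$ on bounded intervals rather than the compactness-based continuity of the Nehari projection that is available in the subcritical case.
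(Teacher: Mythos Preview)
Your proposal is correct and follows essentially the same architecture as the paper: contradiction, boundedness, the reduction estimate $\|\psi_n^- - h_{p_n}(\psi_n^+)\| = O(\alpha_n)$, transfer to $I_{p_n}'(\psi_n^+)[\psi_n^+] \to 0$, and a Nehari-scaling argument to force $t_n \to 1$. The only substantive difference is in how you extract $t_n \to 1$. The paper introduces the auxiliary functional $\ck_p(u) = I_p'(u)[u]$ and records the inequality (an immediate consequence of \eqref{eee1}, valid for \emph{every} $u \in E^+$, not only on $\msn_p$)
\[
\ck_p'(u)[u] \leq 2\ck_p(u) - \frac{p-2}{p-1}\int_{S^m} H|u + h_p(u)|^p\, d\vol_{\ig_{S^m}}.
\]
Setting $g_n(t) = \ck_{p_n}(t\psi_n^+) = t f_n'(t)$ and Taylor-expanding the right-hand side about $t=1$ (which needs only uniform \emph{boundedness} of $g_n'$, i.e.\ of $I_{p_n}''$ on bounded sets), one gets $t g_n'(t) \leq 2g_n(1) - c_0 + C|t-1|$ with $c_0 > 0$ coming from the integral lower bound and $g_n(1) = I_{p_n}'(\psi_n^+)[\psi_n^+] \to 0$. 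Thus $g_n'(t) < -\delta$ on a fixed interval $(1-\delta, 1+\delta)$, and the inverse function theorem yields $|t_n - 1| = O(|g_n(1)|) = O(\alpha_n)$ directly. Your worry about uniform equicontinuity of $f_n''$ in the critical regime is therefore unnecessary: since \eqref{eee1} already holds at \emph{all} $t$, you can replace your pointwise bound $f_n''(t_n) \leq -C_0$ by the differential inequality above and close the argument using only uniform $C^2$ bounds, which you already have.
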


This result provides a kind of continuity of energy levels
with respect to the exponents $p_n$. This is due to the
fact, by Proposition \ref{reduction}, we always have
$\cl_{p_n}(\psi)\leq\max_{t>0}I_{p_n}(t\psi^+)$.

The proof of Proposition \ref{continuity prop} is divided
into several steps. First of all, let $p_n\to p$ in $(2,2^*]$
and suppose that there exists $\{\psi_n\}\subset E$
be a sequence such that
\begin{\equ}\label{contradiction-assumption}
c_1\leq\cl_{p_n}(\psi_n)\leq c_2 \quad \text{and} \quad
\cl_{p_n}'(\psi_n)\to0
\end{\equ}
for some constants $c_1,c_2>0$

\begin{Lem}\label{boundedness}
Under \eqref{contradiction-assumption},
$\{\psi_n\}$ is bounded in $E$.
\end{Lem}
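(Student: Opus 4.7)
The plan is a standard Palais--Smale type boundedness argument, but adapted to the strongly indefinite nature of $\cl_{p_n}$: because the quadratic part $\frac12\int(D\psi,\psi)$ is neither bounded above nor below, we cannot simply combine $\cl_{p_n}(\psi_n)$ with $\cl_{p_n}'(\psi_n)[\psi_n]$ to recover $\|\psi_n\|^2$. Instead we will extract two separate pieces of information from \eqref{contradiction-assumption}: an $L^{p_n}$ bound on $\psi_n$, and a bound on $\|\psi_n\|$ in terms of that $L^{p_n}$ norm. Combining them will give boundedness by a bootstrap.

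First, I would exploit the identity
\[
\cl_{p_n}(\psi_n)-\tfrac12\cl_{p_n}'(\psi_n)[\psi_n]
=\Big(\tfrac12-\tfrac1{p_n}\Big)\int_{S^m}H(\xi)|\psi_n|^{p_n}\,d\vol_{\ig_{S^m}}.
\]
Since $p_n\to p>2$, eventually $\frac12-\frac1{p_n}$ is bounded away from zero, so from $\cl_{p_n}(\psi_n)\le c_2$ and $|\cl_{p_n}'(\psi_n)[\psi_n]|\le\vr_n\|\psi_n\|$ with $\vr_n:=\|\cl_{p_n}'(\psi_n)\|_{E^*}\to0$, I obtain
\[
\int_{S^m}H(\xi)|\psi_n|^{p_n}\,d\vol_{\ig_{S^m}}\le C_1\big(1+\vr_n\|\psi_n\|\big),
\]
and hence, using $H\ge H_{min}>0$, the same bound on $|\psi_n|_{p_n}^{p_n}$.

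Next, I would test the derivative against $\psi_n^+-\psi_n^-$. Using the splitting \eqref{decomposition1} and the fact that $\int(D\psi_n,\psi_n^+-\psi_n^-)\,d\vol_{\ig_{S^m}}=\|\psi_n^+\|^2+\|\psi_n^-\|^2=\|\psi_n\|^2$, the identity $\cl_{p_n}'(\psi_n)[\psi_n^+-\psi_n^-]$ rearranges to
\[
\|\psi_n\|^2\le\vr_n\|\psi_n\|+H_{max}\int_{S^m}|\psi_n|^{p_n-1}|\psi_n^+-\psi_n^-|\,d\vol_{\ig_{S^m}}.
\]
Hölder and the Sobolev embedding $E\hookrightarrow L^{p_n}(S^m,\mbs(S^m))$ (with a constant that is uniform in $n$ since $p_n$ lies in a compact subinterval of $(2,2^*]$) bound the last integral by $C\,|\psi_n|_{p_n}^{p_n-1}\,\|\psi_n\|$, giving
\[
\|\psi_n\|\le \vr_n + C_2\,|\psi_n|_{p_n}^{p_n-1}.
\]

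Finally, I would insert the $L^{p_n}$ estimate from the first step into this inequality, yielding $\|\psi_n\|\le \vr_n+C_3(1+\vr_n\|\psi_n\|)^{(p_n-1)/p_n}$. Since $(p_n-1)/p_n<1$ is bounded away from $1$, a direct bootstrap (assume for contradiction $\|\psi_n\|\to\infty$; then the right-hand side grows at most like $\vr_n^{(p_n-1)/p_n}\|\psi_n\|^{(p_n-1)/p_n}$, forcing $\|\psi_n\|^{1/p_n}\lesssim\vr_n^{(p_n-1)/p_n}\to0$) yields a contradiction. The main obstacle, and the reason the indefinite structure of $\cl_{p_n}$ does not cause trouble, is precisely the use of the sign-reversed test function $\psi_n^+-\psi_n^-$; once this is in place, the argument is routine, modulo checking that all constants can be chosen uniformly in $n$ as $p_n\to p$.
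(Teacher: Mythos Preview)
Your proof is correct and follows essentially the same approach as the paper: both combine the identity $\cl_{p_n}(\psi_n)-\tfrac12\cl_{p_n}'(\psi_n)[\psi_n]=(\tfrac12-\tfrac1{p_n})\int H|\psi_n|^{p_n}$ with the test function $\psi_n^+-\psi_n^-$ to control $\|\psi_n\|^2$, and then close the loop via H\"older and Sobolev. The only cosmetic difference is that the paper keeps the $H$-weight inside the H\"older step rather than pulling out $H_{max}$, but the structure and all key ideas are identical.
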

\begin{proof}
Since $\cl_{p_n}'(\psi_n)\to0$, we have
\begin{\equ}\label{ee1}
c_2+o(\|\psi_n\|)\geq\cl_{p_n}(\psi_n)-\frac12
\cl_{p_n}'(\psi_n)[\psi_n]=\Big(\frac12-\frac1{p_n}\Big)
\int_{S^m}H(\xi)|\psi_n|^{p_n}d\vol_{\ig_{S^m}}.
\end{\equ}
We also have
\[
o(\|\psi_n\|)=\cl_{p_n}'(\psi_n)[\psi_n^+-\psi_n^-]
=\|\psi_n\|^2-\real\int_{S^m}H(\xi)|\psi_n|^{p_n-2}
(\psi_n,\psi_n^+-\psi_n^-)d\vol_{\ig_{S^m}}.
\]
From this and the H\"older and Sobolev inequalities, we
obtain
\begin{eqnarray}\label{e3}
\|\psi_n\|^2&\leq& \int_{S^m}H(\xi)|\psi_n|^{p_n-1}
|\psi_n^+-\psi_n^-|d\vol_{\ig_{S^m}}
+ o(\|\psi_n\|) \nonumber\\
&\leq&\Big( \int_{S^m}H(\xi)|\psi_n|^{p_n}d\vol_{
\ig_{S^m}} \Big)^{\frac{p_n-1}{p_n}}
\Big( \int_{S^m}H(\xi)|\psi_n^+-\psi_n^-|^{p_n}
d\vol_{\ig_{S^m}} \Big)^{\frac{1}{p_n}}
+ o(\|\psi_n\|) \nonumber\\
&\leq& C|H|_{\infty}^{\frac1{2^*}}\cdot
\big(1+o(\|\psi_n\|)\big)^{\frac{p_n-1}{p_n}}\cdot
|\psi_n^+-\psi_n^-|_{2^*}
+ o(\|\psi_n\|) \nonumber\\
&\leq& C'\big(1+o(\|\psi_n\|)\big)^{
\frac{p_n-1}{p_n}}\cdot\|\psi_n\|+ o(\|\psi_n\|).
\end{eqnarray}
Since $p_n\to p>2$, it follows from \eqref{e3} that
$\{\psi_n\}$ is bounded in $E$. For further reference,
we mention that \eqref{ee1} and \eqref{e3} show that
$\psi_n\to0$ if and only if $\cl_{p_n}(\psi_n)\to0$, and
hence $\|\psi_n\|$ should be bounded away from zero
under the assumption \eqref{contradiction-assumption}.
\end{proof}

\begin{Lem}\label{PS approx}
Let $\{\psi_n\}\subset E$ be a sequence such that
$\cl_{p_n}'(\psi_n)|_{E^-}=o_n(1)$, i.e.,
\begin{\equ}\label{d1}
\sup_{v\in E^-,\, \|v\|=1}\cl_{p_n}'(\psi_n)[v]=o_n(1).
\end{\equ}
Then
\[
\|\psi_n^--h_{p_n}(\psi_n^+)\|\leq
O\big(\big\|\cl_{p_n}'(\psi_n)|_{E^-}\big\|\big).
\]
In particular, if $\{\psi_n\}$ is such that
$\cl_{p_n}'(\psi_n)=o_n(1)$, then
$I_{p_n}'(\psi_n^+)=o_n(1)$.
\end{Lem}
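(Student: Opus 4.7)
The plan is to exploit the strong concavity of the auxiliary functional $\phi_u$ established in Proposition \ref{reduction}, which forces the deviation of $\psi_n^-$ from the unique maximizer $h_{p_n}(\psi_n^+)$ to be quantitatively controlled by the $E^-$--residual of $\cl_{p_n}'(\psi_n)$.

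Write $u_n := \psi_n^+$ and $\bar v_n := h_{p_n}(u_n)$. Since $\bar v_n$ is the unique critical point of $\phi_{u_n}:E^-\to\R$, we have $\phi_{u_n}'(\bar v_n) = 0$, while $\phi_{u_n}'(\psi_n^-)[w] = \cl_{p_n}'(\psi_n)[w]$ for every $w\in E^-$. First I would apply the fundamental theorem of calculus along the segment $s\mapsto \bar v_n + s(\psi_n^- - \bar v_n)$ in $E^-$ and then the concavity bound \eqref{concave}:
$$\cl_{p_n}'(\psi_n)[\psi_n^- - \bar v_n] = \int_0^1 \phi_{u_n}''\bigl(\bar v_n + s(\psi_n^- - \bar v_n)\bigr)[\psi_n^- - \bar v_n, \psi_n^- - \bar v_n]\,ds \leq -\|\psi_n^- - \bar v_n\|^2.$$
Rearranging and bounding the left-hand side by the operator norm of $\cl_{p_n}'(\psi_n)|_{E^-}\in(E^-)^*$ times $\|\psi_n^- - \bar v_n\|$ yields the first assertion directly, with implicit constant equal to $1$.

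For the second claim, \eqref{hlm} combined with the chain rule gives
$$I_{p_n}'(u)[w] = \cl_{p_n}'(u + h_{p_n}(u))\bigl[w + h_{p_n}'(u)[w]\bigr] = \cl_{p_n}'(u + h_{p_n}(u))[w]$$
for every $w\in E^+$, since $h_{p_n}'(u)[w]\in E^-$ while $\cl_{p_n}'(u + h_{p_n}(u))$ vanishes on $E^-$. Under the stronger hypothesis $\cl_{p_n}'(\psi_n)=o_n(1)$, the first part yields $\bar v_n - \psi_n^- \to 0$ in $E$, and it remains to estimate
$$I_{p_n}'(\psi_n^+)[w] = \cl_{p_n}'(\psi_n)[w] + \bigl(\cl_{p_n}'(\psi_n^+ + \bar v_n) - \cl_{p_n}'(\psi_n)\bigr)[w].$$
The first summand is $o_n(1)\|w\|$ by assumption; the second is handled by the local Lipschitz continuity of $\cl_{p_n}'$ on bounded subsets of $E$, via the pointwise bound $\bigl|\,|a|^{q}a - |b|^{q}b\,\bigr| \leq C_q(|a|^{q}+|b|^{q})|a-b|$ with $q = p_n-2$, followed by Hölder's inequality and the Sobolev embedding $E\hookrightarrow L^{p_n}$.

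The main obstacle I anticipate is the uniformity of this Lipschitz estimate as $p_n$ varies: one needs boundedness of $\{\psi_n\}$ in $E$, which is not assumed a priori in the statement but follows from Lemma \ref{boundedness} in the energy-bounded regime where the conclusion is actually applied. Beyond this mild technicality, the proof is a mechanical consequence of the strong concavity of $\phi_u$ and the implicit definition of the reduction map $h_{p_n}$.
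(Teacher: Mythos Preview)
Your argument is correct and essentially the same as the paper's. The paper also tests against $v_n:=\psi_n^--h_{p_n}(\psi_n^+)$ and obtains $\|v_n\|^2\le\big|\cl_{p_n}'(\psi_n)[v_n]\big|$; the only cosmetic difference is that the paper reaches this via the monotonicity of the gradient of the convex map $\psi\mapsto\int H|\psi|^{p_n}$ (subtracting $\cl_{p_n}'(z_n)[v_n]=0$ from $\cl_{p_n}'(\psi_n)[v_n]$ directly), whereas you integrate the Hessian bound \eqref{concave} along the segment---these are two phrasings of the same strong concavity. For the second assertion the paper likewise passes from $\cl_{p_n}'(\psi_n)\to0$ and $\|v_n\|\to0$ to $\cl_{p_n}'(z_n)\to0$ without further comment, so your remark that this step tacitly uses boundedness of $\{\psi_n\}$ (supplied by Lemma~\ref{boundedness} in the applications) is a fair observation that applies equally to the paper's own proof.
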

\begin{proof}
For simplicity of notation, let us denote $z_n=\psi_n^+
+h_{p_n}(\psi_n^+)$ and
$v_n=\psi_n^--h_p(\psi_n^+)$. Then we have $v_n\in E^-$
and, by definition of $h_{p_n}$,
\[
0=\cl_{p_n}'(z_n)[v_n]=\real\int_{S^m}(Dz_n,v_n)
d\vol_{\ig_{S^m}}-\real\int_{S^m}H(\xi)|z_n|^{p_n-2}
(z_n,v_n)d\vol_{\ig_{S^m}}.
\]
By \eqref{d1}, it follows that
\[
o(\|v_n\|)=\cl_{p_n}'(\psi_n)[v_n]=\real\int_{S^m}(D\psi_n,v_n)
d\vol_{\ig_{S^m}}-\real\int_{S^m}H(\xi)|\psi_n|^{p_n-2}
(\psi_n,v_n)d\vol_{\ig_{S^m}}.
\]
And hence we have
\begin{\equ}\label{e1}
\aligned
o(\|v_n\|)&=-\int_{S^m}(Dv_n,v_n)d\vol_{\ig_{S^m}}
+\real\int_{S^m}H(\xi)|\psi_n|^{p_n-2}
(\psi_n,v_n)d\vol_{\ig_{S^m}} \\
&\qquad -\real\int_{S^m}H(\xi)|z_n|^{p_n-2}
(z_n,v_n)d\vol_{\ig_{S^m}}.
\endaligned
\end{\equ}
Remark that the functional $\psi\mapsto|\psi|_p^p$
is convex for any $p\in[2,2^*]$, we have
\[
\real\int_{S^m}H(\xi)|\psi_n|^{p_n-2}
(\psi_n,v_n)d\vol_{\ig_{S^m}}
-\real\int_{S^m}H(\xi)|z_n|^{p_n-2}
(z_n,v_n)d\vol_{\ig_{S^m}}\geq0.
\]
Thus, from \eqref{e1} and $v_n\in E^-$, we can infer that
\begin{\equ}\label{e2}
\big|\cl_{p_n}'(\psi_n)[v_n]\big|\geq-\int_{S^m}(Dv_n,v_n)d\vol_{\ig_{S^m}}
=\|v_n\|^2.
\end{\equ}
And therefore we conclude $\|v_n\|\leq
O\big(\big\|\cl_{p_n}'(\psi_n)|_{E^-}\big\|\big).$

If $\{\psi_n\}$ is such that $\cl_{p_n}'(\psi_n)=o_n(1)$,
then estimate \eqref{e2} implies that $\{z_n\}$ is also
satisfying $\cl_{p_n}'(z_n)=o_n(1)$. Thus, we have
$I_{p_n}'(\psi_n^+)=o_n(1)$.
\end{proof}

Now, let us introduce the functional $\ck_p: E^+\to\R$ by
\[
\ck_p(u)=I_p'(u)[u],  \quad u\in E^+.
\]
It is clear that $\ck_p$ is $C^1$ and its derivative is given
by the formula
\[
\ck_p'(u)[w]=I_p'(u)[w]+I_p''(u)[u,w]
\]
for all $u,w\in E^+$. We also have
$\msn_p=\ck_p^{-1}(0)\setminus\{0\}$.

\begin{Lem}\label{Hp}
For any $u\in E^+$ and $p\in(2,2^*]$, we have
\[
\ck_p'(u)[u]\leq 2\ck_p(u)-\frac{p-2}{p-1}
\int_{S^m}H(\xi)|u+h_p(u)|^p d\vol_{\ig_{S^m}}.
\]
\end{Lem}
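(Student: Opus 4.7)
The plan is to reduce the claim to the second-order estimate \eqref{eee1} that is already established inside the proof of Proposition \ref{reduction}. First I would invoke the derivative formula for $\ck_p$ stated just before the Lemma, namely $\ck_p'(u)[w] = I_p'(u)[w] + I_p''(u)[u,w]$, and specialize it to $w = u$. Using $\ck_p(u) = I_p'(u)[u]$, this gives the identity
\[
\ck_p'(u)[u] = I_p'(u)[u] + I_p''(u)[u,u] = \ck_p(u) + I_p''(u)[u,u],
\]
so the inequality to prove is equivalent to
\[
I_p''(u)[u,u] \leq \ck_p(u) - \frac{p-2}{p-1}\int_{S^m}H(\xi)|u+h_p(u)|^p d\vol_{\ig_{S^m}}
 = I_p'(u)[u] - \frac{p-2}{p-1}\int_{S^m}H(\xi)|u+h_p(u)|^p d\vol_{\ig_{S^m}}.
\]

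But this last inequality is precisely the estimate displayed in \eqref{eee1} once the nonpositive term $-\|w\|^2$ (with $w = h_p'(u)[u] - h_p(u) \in E^-$) is discarded. Thus the Lemma is an immediate corollary of the computation already performed in Proposition \ref{reduction}; adding $\ck_p(u)=I_p'(u)[u]$ to both sides of \eqref{eee1} yields exactly the asserted inequality.

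In case one wanted to redo \eqref{eee1} from scratch, the procedure would be: write $z = u+h_p(u)$ and $w = h_p'(u)[u]-h_p(u)\in E^-$, expand $I_p''(u)[u,u] = \cl_p''(z)[z+w,z+w]$, and evaluate each of the three terms using the explicit formulas for $\cg_p'$ and $\cg_p''$ (where $\cg_p(\psi) = \tfrac{1}{p}\int H|\psi|^p$) together with $\cl_p'(z)|_{E^-}\equiv 0$ and $\int(Dw,w) = -\|w\|^2$ for $w\in E^-$. The only substantive ingredient is then a weighted Cauchy--Schwarz / AM--GM bound used to control the cross term $-2(p-2)\cg_p'(z)[w] - \cg_p''(z)[w,w]$ from above by $\tfrac{(p-2)^2}{p-1}\int H|z|^p$; this is where the sharp constant $\tfrac{p-2}{p-1}$ arises, from the pointwise lower bound $\cg_p''(z)[w,w] \geq (p-1)\int H|z|^{p-4}\real(z,w)^2$. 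Everything else is bookkeeping under the splitting $E = E^+\oplus E^-$. Since \eqref{eee1} is already available, however, no new work is required and the proof is essentially one line; there is no real obstacle.
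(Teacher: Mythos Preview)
Your proposal is correct and matches the paper's own proof, which simply says ``This estimate follows immediately from a similar estimate as \eqref{eee1}.'' Your reduction $\ck_p'(u)[u]=\ck_p(u)+I_p''(u)[u,u]$ followed by dropping the $-\|w\|^2$ term in \eqref{eee1} is exactly the intended argument, and your additional explanation of how the constant $\tfrac{p-2}{p-1}$ arises from completing the square is a welcome clarification the paper omits.
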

\begin{proof}
This estimate follows immediately from a similar estimate as \eqref{eee1}.
\end{proof}

We next have:

\begin{Lem}\label{key1}
Let $p_n\to p$ in $(2,2^*]$ as $n\to\infty$,
if $\{u_n\}\subset E^+$ is bounded,
$\displaystyle\liminf_{n\to\infty}I_{p_n}(u_n)>0$ and
$I_{p_n}'(u_n)\to0$ as $n\to\infty$, then there exists
a sequence $\{t_n\}\subset\R$ such that
$t_nu_n\in\msn_{p_n}$ and $|t_n-1|\leq O\big(
\big\| I_{p_n}'(u_n) \big\|\big)$.
\end{Lem}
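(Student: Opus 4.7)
By Proposition~\ref{reduction}, for each $n$ there exists a unique $t_n > 0$ with $t_n u_n \in \msn_{p_n}$ -- namely the unique positive critical point of $\phi_n(t) := I_{p_n}(tu_n)$, at which $\phi_n''(t_n) < 0$. I would first show $\{t_n\}$ lies in a compact subinterval $[a,b] \subset (0,\infty)$ independent of $n$. The bound $I_{p_n}(u_n) \leq \tfrac{1}{2}\|u_n\|^2$ combined with $\liminf I_{p_n}(u_n) > 0$ gives $\|u_n\| \geq \delta_0 > 0$; if $t_n \to 0$, the estimate $\|h_p(u)\|^2 \leq \tfrac{2}{p}\int H|u|^p\, d\vol_{\ig_{S^m}}$ from Proposition~\ref{reduction} forces $I_{p_n}(t_n u_n) \to 0$, contradicting the maximum property $I_{p_n}(t_n u_n) \geq I_{p_n}(u_n) \geq c_0$; if $t_n \to \infty$, the anti-coercivity of $\cl_{p_n}$ on $W(u_n)$ from Lemma~\ref{anti-coercive} gives a contradiction.

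The core of the argument is the auxiliary scalar function
\[
\Lambda_n(t) := \frac{I_{p_n}'(tu_n)[u_n]}{t} = \frac{\ck_{p_n}(tu_n)}{t^2}, \qquad t>0,
\]
which satisfies $\Lambda_n(t_n) = 0$ (since $t_n u_n \in \msn_{p_n}$) and $\Lambda_n(1) = \ck_{p_n}(u_n) = I_{p_n}'(u_n)[u_n]$. Applying the second-order estimate derived in the proof of Proposition~\ref{reduction} (the bound leading to~\eqref{eee1}) at $u = tu_n$ gives
\[
t^2 I_{p_n}''(tu_n)[u_n, u_n] \leq \ck_{p_n}(tu_n) - \frac{p_n - 2}{p_n - 1}\int_{S^m} H \bigabs{tu_n + h_{p_n}(tu_n)}^{p_n} d\vol_{\ig_{S^m}},
\]
and a short computation (using $\ck_{p_n}(tu_n)/t = \phi_n'(t)$ to cancel one term) yields
\[
\Lambda_n'(t) \leq -\frac{p_n - 2}{(p_n - 1)\, t^3}\int_{S^m} H \bigabs{tu_n + h_{p_n}(tu_n)}^{p_n} d\vol_{\ig_{S^m}}.
\]

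The main technical step -- and the chief obstacle -- is to prove that this integral is bounded below by a positive constant uniformly in $n$ and $t \in [a,b]$. For this I would use the identity $2I_p(u) - \ck_p(u) = (1 - 2/p)\int H|u + h_p(u)|^p\, d\vol_{\ig_{S^m}}$ (obtained directly from the definition of $h_p$ together with $\cl_p'(u+h_p(u))[h_p(u)] = 0$), applied at $u = tu_n$: on the segment between $1$ and $t_n$, the maximum property of $\phi_n$ gives $I_{p_n}(tu_n) \geq I_{p_n}(u_n) \geq c_0$, while Lemma~\ref{Hp} combined with the uniform lower bound $\|tu_n\| \geq a\delta_0$ controls $\ck_{p_n}(tu_n)$ from above. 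Once this is established, $\Lambda_n'(t) \leq -c_1 < 0$ uniformly on $[a,b]$, and the mean value theorem gives $|\ck_{p_n}(u_n)| = |\Lambda_n(1) - \Lambda_n(t_n)| \geq c_1\,|t_n - 1|$. Combined with $|\ck_{p_n}(u_n)| \leq \|I_{p_n}'(u_n)\|\,\|u_n\| \leq C\|I_{p_n}'(u_n)\|$, this yields $|t_n - 1| \leq O(\|I_{p_n}'(u_n)\|)$, as required.
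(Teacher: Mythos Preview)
Your overall strategy is sound and takes a genuinely different route from the paper. The paper works \emph{locally}: it sets $g_n(t)=\ck_{p_n}(tu_n)$, combines Lemma~\ref{Hp} with a first-order Taylor expansion at $t=1$ (using that the integral $\int H|u_n+h_{p_n}(u_n)|^{p_n}$ at $t=1$ is bounded below, via the identity you also found) to obtain $g_n'(t)<-\delta$ on a fixed interval $(1-\delta,1+\delta)$, and then applies the inverse function theorem. No a~priori bound on $t_n$ is needed---the local zero of $g_n$ is automatically the unique one from Proposition~\ref{reduction}. Your route via $\Lambda_n(t)=\ck_{p_n}(tu_n)/t^2$ is more global and has the attractive feature that your displayed inequality gives $\Lambda_n'\le 0$ outright (the integral being nonnegative), so $\Lambda_n$ is monotone.

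There is, however, a genuine gap in your a~priori step. Your argument that $t_n$ stays bounded above appeals to Lemma~\ref{anti-coercive}, but that lemma is stated for \emph{fixed} $u$, and the rate of anti-coercivity in its proof depends on $|u|_p$; you have not shown $|u_n|_{p_n}$ is bounded away from zero (only $\|u_n\|$ is). Likewise, your proposed control of $\ck_{p_n}(tu_n)$ ``via Lemma~\ref{Hp} and the lower bound on $\|tu_n\|$'' does not produce the needed upper bound. Both issues are fixable using the monotonicity of $\Lambda_n$ that you already have: on the segment between $1$ and $t_n$ one gets $0\le \ck_{p_n}(tu_n)=t^2\Lambda_n(t)\le t^2\,\ck_{p_n}(u_n)$ (say in the case $t_n\ge 1$), so your identity yields $\bigl(1-\tfrac{2}{p_n}\bigr)\int H|\cdot|^{p_n}\ge 2c_0 - t^2\ck_{p_n}(u_n)$. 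Integrating $\Lambda_n'(t)\le -C_0/t^3$ on $[1,\min(t_n,T_n)]$ with $T_n=\sqrt{c_0/\ck_{p_n}(u_n)}\to\infty$ rules out $t_n>T_n$ (it would force $\Lambda_n(T_n)<0$, contradicting $\Lambda_n(T_n)\ge\Lambda_n(t_n)=0$) and in the remaining case gives $1-t_n^{-2}\le C\,\ck_{p_n}(u_n)$ directly, which is the desired bound. So your plan works once you replace the anti-coercivity step by this monotonicity argument; compared to the paper, you trade Taylor expansion and the inverse function theorem for an ODE-type estimate on $\Lambda_n$.
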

\begin{proof}
Since $\displaystyle\liminf_{n\to\infty}I_{p_n}(u_n)>0$,
we have
\[
\liminf_{n\to\infty}\int_{S^m}H(\xi)
|u_n+h_{p_n}(u_n)|^{p_n}d\vol_{\ig_{S^m}}\geq c_0
\]
for some constant $c_0>0$. Now, for $n\in\N$, let us
set $g_n:(0,+\infty)\to\R$ by
\[
g_n(t)=\ck_{p_n}(tu_n).
\]
We then have $tg_n'(t)=\ck_{p_n}'(tu_n)[tu_n]$
for all $t>0$ and $n\in\N$. Hence, by Lemma \ref{Hp},
Taylor's formula and the uniform boundedness of
$g_n'(t)$ on bounded intervals, we get
\[
tg_n'(t)\leq 2g_n(1)-\frac{p_n-2}{p_n-1}\int_{S^m}
H(\xi)|u_n+h_{p_n}(u_n)|^{p_n} d\vol_{\ig_{S^m}}
+C |t-1|
\]
for $t$ close to $1$ with $C>0$ independent of $n$.
Notice that $g_n(1)=I_{p_n}'(u_n)[u_n]\to0$, thus
there exists a small constant $\de>0$ such that
\[
g'(t)<-\de \text{ for all } t\in(1-\de,1+\de) \text{ and }
n \text{ large enough}.
\]
Moreover, we have $g_n(1-\de)>0$ and $g_n(1+\de)<0$.
Then, by Inverse Function Theorem,
\[
\tilde u_n:=g_n^{-1}(0)u_n\in\msn_{p_n}
\cap \span\{u_n\}
\]
is well-defined for all $n$ large enough. Furthermore,
since $|g_n'(t)^{-1}|$ is bounded by a constant, say $c_\de>0$, on $(1-\de,1+\de)$, we consequently get
\[
\|u_n-\tilde u_n\|%=|g_n^{-1}(0)-1|\cdot\|u_n\|
=|g_n^{-1}(0)-g_n^{-1}(\ck_{p_n}(u_n))|\cdot\|u_n\|
\leq c_\de|\ck_{p_n}(u_n)|\cdot\|u_n\|.
\]
Now the conclusion follows from
$\ck_{p_n}(u_n)\leq O(\|I_{p_n}'(u_n)\|)$.
\end{proof}

\begin{Cor}\label{key2}
Let $\{\psi_n\}$ satisfies \eqref{contradiction-assumption},
then there exists $\{\tilde u_n\}\subset\msn_{p_n}$
such that
$\|\psi_n-\tilde u_n-h_{p_n}(\tilde u_n)\|\leq
O(\|\cl_{p_n}'(\psi_n)\|)$.
In particular
\[
\max_{t>0}I_{p_n}(t\psi_n^+)=I_{p_n}(\tilde u_n)\leq
\cl_{p_n}(\psi_n)+O\big(\|\cl_{p_n}'(\psi_n)\|_{E^*}^2\big).
\]
\end{Cor}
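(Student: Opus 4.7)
The plan is to apply Lemma \ref{key1} with the test sequence $u_n := \psi_n^+$ and then compare the energy of the resulting Nehari point $\tilde u_n$ with that of $\psi_n$ via two Taylor expansions with quadratic remainders.

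First I would verify the hypotheses of Lemma \ref{key1} for $u_n = \psi_n^+$. Boundedness of $\{\psi_n^+\}$ in $E^+$ is immediate from Lemma \ref{boundedness}. Since $\cl_{p_n}'(\psi_n) \to 0$, Lemma \ref{PS approx} yields $I_{p_n}'(\psi_n^+) = o_n(1)$ together with the one-sided control $\|w_n\| \le O(\|\cl_{p_n}'(\psi_n)\|)$ for $w_n := \psi_n^- - h_{p_n}(\psi_n^+)$; a standard computation using orthogonality of $E^+$ and $E^-$ under the bilinear form $\int_{S^m} (D\cdot,\cdot)\,d\vol_{\ig_{S^m}}$ upgrades this to the quantitative bound $\|I_{p_n}'(\psi_n^+)\| = O(\|\cl_{p_n}'(\psi_n)\|)$. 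The required lower bound $\liminf I_{p_n}(\psi_n^+) > 0$ follows from the fact that $\psi_n^+ + h_{p_n}(\psi_n^+)$ maximizes $v \mapsto \cl_{p_n}(\psi_n^+ + v)$ over $E^-$, so $I_{p_n}(\psi_n^+) \ge \cl_{p_n}(\psi_n) \ge c_1 > 0$ by \eqref{contradiction-assumption}.

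Lemma \ref{key1} now produces $t_n \in \R$ with $\tilde u_n := t_n \psi_n^+ \in \msn_{p_n}$ and $|t_n - 1| \le O(\|\cl_{p_n}'(\psi_n)\|)$, and the identity $\max_{t>0} I_{p_n}(t\psi_n^+) = I_{p_n}(\tilde u_n)$ is exactly the maximization property recorded right after Proposition \ref{reduction}. To estimate $\|\psi_n - \tilde u_n - h_{p_n}(\tilde u_n)\|$ I would split
\[
\psi_n - \tilde u_n - h_{p_n}(\tilde u_n) = (1-t_n)\psi_n^+ + w_n + \bigl(h_{p_n}(\psi_n^+) - h_{p_n}(t_n\psi_n^+)\bigr),
\]
controlling the first term by $|t_n - 1|$ times the bounded norm of $\psi_n^+$, the middle term by Lemma \ref{PS approx}, and the last term by local Lipschitz continuity of the $C^1$ map $h_{p_n}$ (its derivative is uniformly bounded on bounded sets by formula \eqref{h-derivative}). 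All three are $O(\|\cl_{p_n}'(\psi_n)\|)$.

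Finally, the energy inequality combines two Taylor expansions. Setting $g_n(t) := I_{p_n}(t\psi_n^+)$, the Nehari condition forces $g_n'(t_n) = 0$, so expansion from $t=t_n$ to $t=1$ gives $I_{p_n}(\tilde u_n) - I_{p_n}(\psi_n^+) = -\tfrac12 g_n''(\xi_n)(1-t_n)^2 = O(\|\cl_{p_n}'(\psi_n)\|^2)$ once $g_n''$ is controlled uniformly in $n$ near $t=1$. For the second expansion, $h_{p_n}(\psi_n^+)$ is the critical point of the concave functional $\phi_{\psi_n^+}$ on $E^-$, and its Hessian $\phi_{\psi_n^+}''$ has uniformly bounded operator norm on bounded sets, so integrating the Taylor remainder from $v_0 = h_{p_n}(\psi_n^+)$ to $v_1 = \psi_n^-$ yields $I_{p_n}(\psi_n^+) - \cl_{p_n}(\psi_n) \le O(\|w_n\|^2) = O(\|\cl_{p_n}'(\psi_n)\|_{E^*}^2)$. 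Adding the two pieces delivers the claimed bound. The main obstacle I expect is the uniform-in-$n$ control of the second derivatives feeding both Taylor expansions, since the exponents $p_n$ vary; this should follow from the uniform boundedness of $\{\psi_n^+\}$ and of $\{h_{p_n}(\psi_n^+)\}$ (via the norm bound in Proposition \ref{reduction}) together with $p_n \to p > 2$, but requires careful bookkeeping of constants.
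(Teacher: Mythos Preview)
Your proposal is correct and follows essentially the same approach as the paper: verify the hypotheses of Lemma~\ref{key1} for $u_n=\psi_n^+$ via Lemmas~\ref{boundedness} and~\ref{PS approx}, split $\psi_n-\tilde u_n-h_{p_n}(\tilde u_n)$ into the same three pieces, and control each using $|t_n-1|\le O(\|I_{p_n}'(\psi_n^+)\|)$ together with the quantitative bound $\|I_{p_n}'(\psi_n^+)\|=O(\|\cl_{p_n}'(\psi_n)\|_{E^*})$.

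The only organizational difference is in the final energy estimate. You run two separate second-order Taylor expansions (one for $t\mapsto I_{p_n}(t\psi_n^+)$ radially, one for $v\mapsto\cl_{p_n}(\psi_n^++v)$ in the $E^-$ direction) and add them. The paper instead performs a single Taylor expansion of $\cl_{p_n}$ at the point $\tilde u_n+h_{p_n}(\tilde u_n)$, evaluates it at $\psi_n$, and then kills the linear term in one stroke by observing that $\cl_{p_n}'(\tilde u_n+h_{p_n}(\tilde u_n))$ vanishes on $E^-$ (by definition of $h_{p_n}$) and on $\psi_n^+-\tilde u_n$ (since this vector is parallel to $\tilde u_n\in\msn_{p_n}$). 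Both routes rely on the same uniform bound on $\cl_{p_n}''$ over bounded sets; the paper's is slightly cleaner, yours makes the two sources of quadratic error more explicit. One small point: your phrase ``orthogonality of $E^+$ and $E^-$'' is not quite what delivers $\|I_{p_n}'(\psi_n^+)\|=O(\|\cl_{p_n}'(\psi_n)\|)$; rather it is the identity $I_{p_n}'(\psi_n^+)[w]=\cl_{p_n}'(z_n)[w]$ (from $\cl_{p_n}'(z_n)|_{E^-}=0$) combined with the $C^2$ bound $\|\cl_{p_n}'(z_n)-\cl_{p_n}'(\psi_n)\|\le C\|w_n\|$, exactly as in the paper.
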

\begin{proof}
According to Lemma \ref{PS approx}, by setting
$z_n=\psi_n^++h_{p_n}(\psi_n^+)$, we have
\[
\|\psi_n-z_n\|\leq O(\|\cl_{p_n}'(\psi_n)\|_{E^*})
\]
and $\{\psi_n^+\}\subset E^+$ is a sequence satisfying
the assumptions of Lemma \ref{key1}. Hence, there exits
$t_n>0$ such that $\tilde u_n:=t_n\psi_n^+\in \msn_{p_n}$
and
\begin{\equ}\label{ee2}
\aligned
\|\psi_n-\tilde u_n-h_{p_n}(\tilde u_n)\|&\leq
\|\psi_n-z_n\|+|t_n-1|\cdot\|\psi_n^+\|
+\|h_{p_n}(\tilde u_n)-h_{p_n}(\psi_n^+)\|\\[0.4em]
&\leq O(\|\cl_{p_n}'(\psi_n)\|_{E^*}) +
O(\|I_{p_n}'(\psi_n^+)\|)
\endaligned
\end{\equ}
where we have used an easily checked inequality
\[
\|h_{p_n}(\tilde u_n)-h_{p_n}(\psi_n^+)\|\leq
\|h_{p_n}'(\tau u_n)\|\cdot \|\tilde u_n-\psi_n^+\|
=O(\|\tilde u_n-\psi_n^+\|).
\]
for some $\tau$ between $t_n$ and $1$.
Remark that
$\|I_{p_n}'(\psi_n^+)\|=\|\cl_{p_n}'(z_n)\|_{E^*}$ and,
by using the $C^2$ smoothness of $\cl_{p_n}$, we have
\[
\|I_{p_n}'(\psi_n^+)\|=
\|\cl_{p_n}'(z_n)\|\leq\|\cl_{p_n}'(\psi_n)\|
+O(\|\psi_n-z_n\|)=O(\|\cl_{p_n}'(\psi_n)\|_{E^*})
\]
This together with \eqref{ee2} implies
\[
\|\psi_n-\tilde u_n-h_{p_n}(\tilde u_n)\|\leq
O(\|\cl_{p_n}'(\psi_n)\|_{E^*}).
\]

Now, by Taylor's formula and the uniform boundedness of $\cl_{p_n}''(\psi_n)$,
we can obtain
\[
\aligned
\cl_{p_n}(\psi_n)&=\cl_{p_n}(\tilde u_n+h_{p_n}
(\tilde u_n))+\cl_{p_n}'(\tilde u_n+h_{p_n}
(\tilde u_n))[\psi_n-\tilde u_n-h_{p_n}
(\tilde u_n)]+O(\|\cl_{p_n}'(\psi_n)\|_{E^*}^2)\\
&=I_{p_n}(\tilde u_n)+I_{p_n}'(\tilde u_n)
[\psi_n^+-\tilde u_n]+O(\|\cl_{p_n}'(\psi_n)\|_{E^*}^2).
\endaligned
\]
Notice that $\tilde u_n=t_n\psi_n^+\in\msn_{p_n}$,
we have $I_{p_n}'(\tilde u_n)[\psi_n^+-\tilde u_n]\equiv0$
and this implies the last estimate of the corollary.
\end{proof}

We are now in a position to complete the proof of
Proposition \ref{continuity prop}.

\begin{proof}[Proof of Proposition \ref{continuity prop}]
We proceed by contradiction. Assume to the contrary that
there exist $\theta_0>0$, $\al_n\to0$ and
$\{\psi_n\}\subset E$ such that
\[
c_1\leq\cl_{p_n}(\psi_n)\leq c_2 \quad \text{and} \quad
\|\cl_{p_n}'(\psi_n)\|_{E^*}\leq \al_n
\]
and
\begin{\equ}\label{contradiction}
\max_{t>0}I_{p_n}(t\psi_n^+)>\cl_{p_n}(\psi_n)+\theta_0.
\end{\equ}
Then it is clear that $\{\psi_n\}$ satisfies
\eqref{contradiction-assumption}.
Therefore, by Corollary \ref{key2}, we should have that
\[
\max_{t>0}I_{p_n}(t\psi_n^+)=I_{p_n}(\tilde u_n)\leq
\cl_{p_n}(\psi_n)+O\big(\|\cl_{p_n}'(\psi_n)\|_{E^*}^2\big).
\]
This contradicts \eqref{contradiction}.
\end{proof}

\subsection{A Rayleigh type quotient}

For any $p\in(2,2^*]$ we define the functional
\[
\rr_{p}: E\setminus\{0\}\to\R, \quad
\psi\mapsto
\frac{\int_{S^m}(D\psi,\psi)d\vol_{\ig_{S^m}}}
{\big(\int_{S^m}H(\xi)|\psi|^pd\vol_{\ig_{S^m}}
\big)^{\frac2p}}.
\]
Here we remark that $\rr_{p}$ is a differentiable
functional and its derivation is given by
\begin{\equ}\label{rr-derivative}
\rr_{p}'(\psi)[\va]=\frac2{A(\psi)^{\frac2p}}\Big[
\real\int_{S^m}(D\psi,\va)d\vol_{\ig_{S^m}}
-\frac{\rr_{p}(\psi)}p A(\psi)^{\frac{2-p}p}\cdot
A'(\psi)[\va] \Big]
\end{\equ}
where (for simplicity) we have used the notations
\[
A(\psi):=\int_{S^m}H(\xi)|\psi|^pd\vol_{\ig_{S^m}}
\quad \text{and} \quad
A'(\psi)[\va]=p\,\real\int_{S^m}H(\xi)|\psi|^{p-2}(\psi,\va)
d\vol_{\ig_{S^m}}.
\]
Let $u\in E^+\setminus\{0\}$, we define
\[
\pi_u: E^-\to\R, \quad v\mapsto \rr_{p}(u+v)
=\frac{\|u\|^2-\|v\|^2}{A(u+v)^{\frac2p}}.
\]
Then we see that $\sup_{v\in E^-}\pi_u(v)>0$ is attained by some $v_u\in E^-$.
Notice that for any positive $c>0$ the set $\{v\in E^-: \pi_u(v)\ge c\}$ is strictly convex because the map
\[
  v\mapsto \|u\|^2 - \|v\|^2  - cA(u+v)^{\frac2p}
\]
is strictly concave on $E^-$. Hence the maximum point $v_u\in E^-$ is uniquely determined.

%Let $w\in E^-$ with $\pi_u(w)>0$ and $\pi_u'(w)=0$.
%An elementary calculation shows that, for $v\in E^-$,
%\begin{eqnarray}\label{second-derivative of pi}
%\pi_u''(w)[v,v]&=&\frac2{A(u+w)^{\frac2p}}\bigg[
%\int_{S^m}(Dv,v)d\vol_{\ig_{S^m}}-\frac{\pi_u(w)}p
%A(u+w)^{\frac{2-p}p}A''(u+w)[v,v]  \nonumber\\
%& &\qquad
%-\frac{2-p}{p^2}\pi_u(w)A(u+w)^{\frac{2-2p}p}\big(
%A'(u+w)[v]\big)^2
%\bigg].
%\end{eqnarray}
%Notice that
%\[
%A'(u+w)[v]=p\,\real\int_{S^m}H(\xi)|u+w|^{p-2}(u+w,v)
%d\vol_{\ig_{S^m}}
%\]
%and
%\[
%\aligned
%A''(u+w)[v,v]&=p\,\real\int_{S^m}H(\xi)|u+w|^{p-2}|v|^2
%d\vol_{\ig_{S^m}}\\
%&\qquad
%+p\,(p-2)\int_{S^m}H(\xi)|u+w|^{p-4}
%\big( \real(u+w,v) \big)^2 d\vol_{\ig_{S^m}}.
%\endaligned
%\]
%Hence, by H\"older inequality, we get
%\[
%\aligned
%\frac{p-2}{p^2}\big( A'(u+w)[v] \big)^2
%&=(p-2)\Big( \real\int_{S^m}H(\xi)|u+w|^{p-2}(u+w,v)
%d\vol_{\ig_{S^m}}\Big)^2 \\
%&\leq(p-2)A(u+w)\cdot
%\int_{S^m}H(\xi)|u+w|^{p-4}\big( \real(u+w,v) \big)^2
%d\vol_{\ig_{S^m}}\\
%&\leq\frac1p A(u+w)A''(u+w)[v,v].
%\endaligned
%\]
%This and \eqref{second-derivative of pi} imply that
%\[
%\pi_u''(w)[v,v]\leq-\frac2{A(u+w)^{\frac2p}}\|v\|^2.
%\]
%And therefore, the maximum point $v_u\in E^-$ is uniquely
%determined. \todo{super-level sets of $\pi_u\geq c\geq0$ are bounded convex sets}

Now, let's define the map
\[
\msj_{p}: E^+\setminus\{0\}\to E^-, \quad u\mapsto v_u
\text{ which is the maximum point of } \pi_u
\]
and the functional $\msf_{p}: E^+\setminus\{0\}\to\R$
\[
\msf_{p}(u)=\rr_{p}(u+\msj_{p}(u))=\max_{v\in E^-}
\rr_{p}(u+v).
\]
Remark that $\rr_{p}(t\psi)\equiv\rr_{p}(\psi)$
for all $t>0$, thus we have $\msj_{p}(tu)=t\msj_{p}(u)$.
Moreover, since $\msf_p(u)>0$, we must have
$\|\msj_p(u)\|<\|u\|$ for all $u\in E^+\setminus\{0\}$.

\begin{Lem}\label{msf=I}
$\msf_p(u)=\big(\frac{2p}{p-2}I_p(u)\big)^{\frac{p-2}p}$
for $u\in\msn_p$.
\end{Lem}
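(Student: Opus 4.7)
The plan is to squeeze both $I_p(u)$ and $\msf_p(u)$ through the common intermediate quantity $\max_{\psi \in W(u)} \cl_p(\psi)$, exploiting the scale-invariance of $\rr_p$ on one side and the explicit scaling of $\cl_p$ along rays on the other.

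First, I would perform a one-variable calculation: for any $\psi \in E$ with $\int_{S^m}(D\psi, \psi)\, d\vol_{\ig_{S^m}} > 0$, the polynomial $t \mapsto \cl_p(t\psi) = \tfrac{t^2}{2}\int(D\psi,\psi)\,d\vol_{\ig_{S^m}} - \tfrac{t^p}{p}\int H|\psi|^p\,d\vol_{\ig_{S^m}}$ is maximized on $[0,\infty)$ at $t_* = \bigl(\int(D\psi,\psi)/\int H|\psi|^p\bigr)^{1/(p-2)}$, and substituting back yields the algebraic bridge
\[
\max_{t\ge 0}\cl_p(t\psi) \;=\; \frac{p-2}{2p}\,\rr_p(\psi)^{p/(p-2)}.
\]

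Second, I would identify $\sup_{\psi \in W(u)\setminus\{0\}}\rr_p(\psi) = \msf_p(u)$. By scale invariance $\rr_p(s\psi) = \rr_p(\psi)$ for $s \neq 0$, every ray in $W(u)$ with a non-zero $E^+$-component has a unique representative of the form $u + w$ with $w \in E^-$, contributing $\sup_{w \in E^-}\rr_p(u+w) = \msf_p(u) > 0$; directions lying entirely in $E^-$ give $\rr_p(v) < 0$ and are dominated. Since $\max_{\psi \in W(u)} \cl_p(\psi) > 0$ by Proposition \ref{reduction}, the maximum is attained in the region where $\rr_p(\psi) > 0$, so combining with the first step and the monotonicity of $x \mapsto x^{p/(p-2)}$ on $x > 0$ I obtain
\[
\max_{\psi\in W(u)}\cl_p(\psi) \;=\; \sup_{\psi \in W(u)\setminus\{0\}}\max_{t>0}\cl_p(t\psi) \;=\; \frac{p-2}{2p}\,\msf_p(u)^{p/(p-2)}.
\]

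Finally, by the closing assertion of Proposition \ref{reduction}, $\max_{\psi \in W(u)}\cl_p(\psi) = \max_{t>0} I_p(tu)$, and for $u \in \msn_p$ the characterization $t(u)u \in \msn_p$ of the unique positive critical point of $t \mapsto I_p(tu)$ forces $t(u) = 1$, making this maximum equal to $I_p(u)$. Equating the two expressions and solving for $\msf_p(u)$ yields the stated formula. The main bookkeeping obstacle I anticipate is the second step: verifying that the supremum of $\rr_p$ over the full subspace $W(u)\setminus\{0\}$ -- including negative-$t$ rays and the pure $E^-$ slice -- genuinely collapses onto the affine supremum $\msf_p(u)$, and that the interchange $\sup_{\psi \in W(u)\setminus\{0\}}\max_{t>0}$ is lossless since the positive optimizer activates the formula of the first step.
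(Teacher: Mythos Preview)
Your argument is correct. The scale-invariance of $\rr_p$, the one-variable formula $\max_{t>0}\cl_p(t\psi)=\tfrac{p-2}{2p}\rr_p(\psi)^{p/(p-2)}$ for $\rr_p(\psi)>0$, and the identity $\max_{t>0}I_p(tu)=\max_{\psi\in W(u)}\cl_p(\psi)$ combine cleanly to yield the result. The worry you flag in the second step is harmless: rays with negative $E^+$-coefficient are handled by scale-invariance (set $s<0$), and pure $E^-$ directions contribute negative values of $\rr_p$, hence $\max_{t\ge0}\cl_p(t\psi)=0$ there, which is dominated by the positive value $I_p(u)$.

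The paper takes a shorter and more direct route: rather than compare maximal \emph{values} over $W(u)$, it identifies the \emph{maximizers}. For $u\in\msn_p$ the two conditions $\cl_p'(u+h_p(u))|_{E^-}=0$ and $I_p'(u)[u]=0$ plug into the derivative formula \eqref{rr-derivative} to give $\rr_p'(u+h_p(u))|_{E^-}=0$, which by uniqueness of the maximizer of $\pi_u$ forces $\msj_p(u)=h_p(u)$. Then $\msf_p(u)=\rr_p(u+h_p(u))$ is computed directly from $\int(D(u+h_p(u)),u+h_p(u))=\int H|u+h_p(u)|^p$. This buys the extra information that the two reduction maps $h_p$ and $\msj_p$ coincide on $\msn_p$, which is used later (e.g.\ in Lemma~\ref{msj converge}). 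Your approach, by contrast, is self-contained at the level of values and avoids checking the critical-point equation for $\rr_p$, but does not make the identification $\msj_p=h_p$ explicit.
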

\begin{proof}
Let $u\in\msn_p$, then
\[
0=I_p'(u)[u]=\int_{S^m}\big( D(u+h_p(u)),u+h_p(u) \big)
d\vol_{\ig_{S^m}}-\int_{S^m}H(\xi)|u+h_p(u)|^p
d\vol_{\ig_{S^m}}.
\]
Hence $I_p(u)=I_p(u)-\frac12I_p'(u)[u]=\frac{p-2}{2p}
\int_{S^m}H(\xi)|u+h_p(u)|^p d\vol_{\ig_{S^m}}$.

On the other hand, by \eqref{rr-derivative}, we have
\[
\rr_p'(u+h_p(u))[v]\equiv0 \quad \forall v\in E^-.
\]
This, together with the fact $\rr_p(u+h_p(u))>0$, suggests
that $\msj_p(u)=h_p(u)$ for $u\in\msn_p$. And therefore
we get
\[
\msf_p(u)=\Big( \int_{S^m}H(\xi)|u+h_p(u)|^p
d\vol_{\ig_{S^m}} \Big)^{1-\frac2p}=
\Big( \frac{2p}{p-2}I_p(u) \Big)^{\frac{p-2}p}
\]
\end{proof}

By Lemma \ref{msf=I} and the fact
\[
\forall u\in E^+\setminus\{0\} \text{ there uniquely exists }
t=t(u)>0 \text{ such that }  t(u)u\in\msn_p,
\]
we have critical points of $\msf_p$ and $I_p$ are in
one-to-one correspondence via the map $u\mapsto t(u)u$
from $E^+\setminus\{0\}$ to $\msn_p$.

Next, for any $p\in(2,2^*]$, we define
\begin{\equ}\label{tau}
\tau_p=\inf_{u\in E^+\setminus\{0\}}\msf_p(u).
\end{\equ}
By Lemma \ref{msf=I}, we have $\tau_p\in(0,+\infty)$.
In order to show properties of the map $p\mapsto \tau_p$,
 we shall first prove the following:

\begin{Lem}\label{msj converge}
Let $\{p_n\}\subset(2,2^*)$ be an increasing sequence that
converges to $p\leq2^*$.
For each $u\in E^+\setminus\{0\}$, we have
$\msj_n(u):=\msj_{p_n}(u)\to\msj_p(u)$ as $n\to\infty$.
\end{Lem}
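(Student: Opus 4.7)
The plan is to exploit the uniqueness of the maximizer of $\pi_u$ on $E^-$ (granted by the strict concavity of its superlevel sets, as stated in the excerpt) and to pass to the limit in the defining maximization inequality. Writing $v_n := \msj_n(u)$ and $A_{r}(\psi) := \int_{S^m} H(\xi)|\psi|^{r}d\vol_{\ig_{S^m}}$, I proceed in three steps.

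\textbf{Step 1 (boundedness and weak limit).} Since $\msf_{p_n}(u) = \rr_{p_n}(u+v_n) > 0$ and the denominator $A_{p_n}(u+v_n)^{2/p_n}$ is strictly positive, we must have $\|v_n\|^2 < \|u\|^2$ for every $n$. Thus $\{v_n\}$ is bounded in $E^-$, and after extracting a subsequence, $v_n \rightharpoonup v_*$ weakly in $E^-$ for some $v_*\in E^-$.

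\textbf{Step 2 (convergence of the nonlinear term).} By the compact embedding $E \hookrightarrow L^{q}$ for every $q<2^*$, we may pass to a further subsequence with $v_n \to v_*$ strongly in $L^{q}$ and a.e.\ on $S^m$. Combined with $p_n \to p$ and continuity of the integrand in the exponent, this yields
\[
A_{p_n}(u+v_n) \longrightarrow A_p(u+v_*),
\]
either by dominated convergence (when $p<2^*$, taking $q$ between $p$ and $2^*$) or by Vitali's theorem (at the endpoint $p=2^*$, using the uniform Sobolev bound $|v_n|_{2^*}\le C\|u\|$ and the fact that $p_n<2^*$ strictly, which rules out concentration of $|u+v_n|^{p_n}$).

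\textbf{Step 3 (identification and strong convergence).} By maximality of $v_n$ for $\pi_u$ at exponent $p_n$, testing against $\msj_p(u)\in E^-$ gives
\[
\frac{\|u\|^2 - \|v_n\|^2}{A_{p_n}(u+v_n)^{2/p_n}} \;\geq\; \frac{\|u\|^2 - \|\msj_p(u)\|^2}{A_{p_n}(u+\msj_p(u))^{2/p_n}}.
\]
Taking the $\limsup$ on the left with $\|v_*\|^2 \leq \liminf\|v_n\|^2$ together with Step~2, and the (full) limit on the right, we obtain $\rr_p(u+v_*) \geq \rr_p(u+\msj_p(u))$. By uniqueness of the maximizer, $v_* = \msj_p(u)$. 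Moreover, the chain of inequalities must become an equality in the limit, which forces $\|v_n\| \to \|v_*\|$; combined with weak convergence in the Hilbert space $E^-$ this upgrades to strong convergence. Since every weakly convergent subsequence has the same limit, the full sequence converges to $\msj_p(u)$.

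\textbf{Main obstacle.} The delicate point is Step~2 when $p=2^*$: the embedding $E\hookrightarrow L^{2^*}$ is not compact, so one cannot simply take $q=p$ in the compactness argument. One must exploit $p_n<2^*$ strictly (guaranteed by the monotonicity hypothesis $p_n\nearrow p$) together with the uniform bound $|v_n|_{2^*}\le C\|u\|$ to establish equi-integrability of $\{|u+v_n|^{p_n}\}$ and apply Vitali. This is exactly where the subcriticality $\{p_n\}\subset(2,2^*)$ really enters; were $p_n=2^*$ allowed, concentration bubbles could obstruct the convergence $A_{p_n}(u+v_n)\to A_p(u+v_*)$ and, with it, the entire argument.
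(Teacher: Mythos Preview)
Your overall strategy is sound and in fact more direct than the paper's, but Step~2 at the endpoint $p=2^*$ has a gap. You assert that equi-integrability of $\{H|u+v_n|^{p_n}\}$ follows from the uniform $L^{2^*}$ bound together with $p_n<2^*$; it does not. The only H\"older estimate available is
\[
\int_A |u+v_n|^{p_n}\,d\vol_{\ig_{S^m}} \leq C\,|A|^{1-p_n/2^*},
\]
and the exponent $1-p_n/2^*\to 0$, so this gives no uniform control as $n\to\infty$. Weak $H^{1/2}$-convergence of $v_n$ does not by itself preclude $L^{2^*}$-concentration of $u+v_n$, and nothing you cite rules it out. Since you flag this step as the ``main obstacle'' and then claim Vitali handles it, this is a genuine (if easily repaired) gap.

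The fix is to drop Vitali in favor of Fatou: a.e.\ convergence and $H>0$ give $\liminf_n A_{p_n}(u+v_n)\geq A_p(u+v_*)$, and this one-sided bound is all Step~3 really needs. With $N_n:=\|u\|^2-\|v_n\|^2$ and $D_n:=A_{p_n}(u+v_n)^{2/p_n}$ one has $\limsup N_n\leq N_*$ (weak lower semicontinuity of the norm) and $\liminf D_n\geq D_*>0$ (Fatou), whence $\limsup(N_n/D_n)\leq N_*/D_*=\rr_p(u+v_*)$; the comparison with the right-hand side (where the spinor is fixed and $r\mapsto A_r$ is continuous) then forces $v_*=\msj_p(u)$ by uniqueness. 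The chain collapses to equalities, and since every subsequential limit $(N',D')$ of $(N_n,D_n)$ satisfies $N'/D'=N_*/D_*$ with $N'\leq N_*$, $D'\geq D_*$, $N_*>0$, $D_*>0$, one gets $N'=N_*$, hence $\|v_n\|\to\|v_*\|$ and strong convergence.

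By way of comparison, the paper proceeds quite differently: it introduces the Nehari scaling $t_n>0$ with $t_nu\in\msn_{p_n}$, identifies $t_n\msj_{p_n}(u)=h_{p_n}(t_nu)$, passes to the limit in the Euler--Lagrange equation on $W(u)$ to identify the weak limit, and then obtains strong convergence by a Fatou-based contradiction. Your approach bypasses the reduction couple $(h_p,I_p)$ and the Nehari structure entirely, working only with the maximality inequality and uniqueness; once Step~2 is repaired as above, this is a cleaner and more self-contained argument.
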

\begin{proof}
We fix $u\in E^+\setminus\{0\}$ and, by noting that
$\|\msj_n(u)\|<\|u\|$, we can assume without loss of generality
that $\msj_n(u)\rightharpoonup v\in E^-$ as $n\to\infty$.
Moreover, up to a subsequence, we may have
\[
\lim_{n\to\infty}\Big(\int_{S^m}H(\xi)|u+\msj_n(u)|^{p_n}
d\vol_{\ig_{S^m}}\Big)^{\frac1{p_n}}=\ell>0
\]

Remark that for each $\psi\in E$
\[
q\mapsto\Big(\int_{S^m}H(\xi)|\psi|^q d\vol_{\ig_{S^m}}\Big)^{\frac1q}
\quad \text{is nondecreasing}
\]
as we have assumed $\int_{S^m}H(\xi)d\vol_{\ig_{S^m}}=1$.
Then
\begin{\equ}\label{msf-p-decreasing}
\aligned
\msf_{p_{n+1}}(u)&=
\rr_{p_{n+1}}(u+\msj_{n+1}(u))\leq
\rr_{p_{n}}(u+\msj_{n+1}(u))\\
&\leq\rr_{p_{n}}(u+\msj_{n}(u))=\msf_{p_{n}}(u).
\endaligned
\end{\equ}
Hence we have $\{\msf_{p_n}(u)\}$ is a non-increasing
sequence and $\msf_{p_n}(u)\to\tau>0$ as $n\to\infty$.

Choose $t_n>0$ such that $t_nu\in\msn_{p_n}$,
as was argued in Lemma \ref{msf=I},
we shall have $t_n\msj_n(u)=\msj_n(t_nu)
=h_{p_n}(t_nu)$. And hence, we can deduce
\[
\msf_{p_n}(u)=\rr_{p_{n}}(t_nu+t_n\msj_{n}(u))
=\Big( \int_{S^m}H(\xi)|t_nu+t_n\msj_n(u)|^{p_n}
d\vol_{\ig_{S^m}}\Big)^{\frac{p_n-2}{p_n}}.
\]
And therefore we have
\[
\lim_{n\to\infty}t_n=t_0:=\frac{\tau^{\frac1{p-2}}}\ell>0
\]
and $t_n\msj_n(u)\rightharpoonup t_0v$ as $n\to\infty$.

Now, let us take arbitrarily $w\in W(u)=\span\{u\}\op E^-$.
Since $t_nu\in\msn_{p_n}$, we get
\[
\aligned
&\int_{S^m}(D(t_0u+t_0v),w)d\vol_{\ig_{S^m}}
-\real\int_{S^m}H(\xi)|t_0u+t_0v|^{p-2}(t_0u+t_0v,w)
d\vol_{\ig_{S^m}}\\
&\qquad=\lim_{n\to\infty}\bigg[
\int_{S^m}(D(t_nu+t_n\msj_n(u)),w)d\vol_{\ig_{S^m}}\\
&\qquad \qquad \qquad
-\real\int_{S^m}H(\xi)|t_nu+t_n\msj_n(u)|^{p_n-2}
(t_nu+t_n\msj_n(u),w) d\vol_{\ig_{S^m}} \bigg] \\
&\qquad=\lim_{n\to\infty}I_{p_n}(t_nu)[w]=0
\endaligned
\]
This implies, by using the fact $t_0>0$, $t_0v=h_p(t_0u)$ and
$t_0u\in\msn_p$. And thus $v=\msj_p(u)$ and
$\msj_n(u)\rightharpoonup\msj_p(u)$ as $n\to\infty$.

To complete the proof, let us now assume to the contrary that
$\msj_n(u)\nrightarrow\msj_p(u)$ (up to any subsequence).
Then we must have
$\displaystyle\varliminf_{n\to\infty}\|\msj_n(u)\|>\|\msj_p(u)\|$.
And hence we get
\begin{eqnarray*}
\int_{S^m}H(\xi)|t_0u+t_0\msj_p(u)|^p d\vol_{\ig_{S^m}}
&=&\int_{S^m}(D(t_0u+t_0\msj_p(u)),t_0u+t_0\msj_p(u))
d\vol_{\ig_{S^m}}\\
&=&\|t_0u\|^2-\|t_0\msj_p(u)\|^2\\
&>&\varliminf_{n\to\infty}\big(\|t_nu\|^2-\|t_n\msj_n(u)\|^2\big)\\
&=&\varliminf_{n\to\infty}
\int_{S^m}H(\xi)|t_nu+t_n\msj_n(u)|^{p_n}
d\vol_{\ig_{S^m}}\\
&\geq&\int_{S^m}H(\xi)|t_0u+t_0\msj_p(u)|^p d\vol_{\ig_{S^m}}
\end{eqnarray*}
where the last inequality follows from Fatou's lemma. And
this estimate is obviously impossible.
\end{proof}

\begin{Prop}\label{key3}
The function $(2,2^*]\to(0,+\infty)$, $p\mapsto\tau_p$ is
\begin{itemize}
\item[$(1)$] non-increasing,

\item[$(2)$] continuous from the left.
\end{itemize}
\end{Prop}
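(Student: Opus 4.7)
The key analytic input for both parts is the normalization hypothesis $\int_{S^m}H\,d\vol_{\ig_{S^m}}=1$, which makes $H\,d\vol_{\ig_{S^m}}$ a probability measure. By Jensen's inequality (the standard monotonicity of $L^q$-norms on a probability space), the map $q\mapsto \big(\int_{S^m}H|\psi|^q\,d\vol_{\ig_{S^m}}\big)^{1/q}$ is non-decreasing for each fixed $\psi\in E$.

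For part (1), I fix $u\in E^+\setminus\{0\}$ and $2<p<q\leq 2^*$. Since $\msf_q(u)>0$, the numerator $\|u\|^2-\|\msj_q(u)\|^2$ of $\rr_q(u+\msj_q(u))$ is strictly positive, so replacing the $q$-norm in the denominator by the smaller $p$-norm can only decrease the ratio:
\[
\msf_q(u)=\rr_q(u+\msj_q(u))\leq \rr_p(u+\msj_q(u))\leq \sup_{v\in E^-}\rr_p(u+v)=\msf_p(u).
\]
Taking the infimum over $u\in E^+\setminus\{0\}$ yields $\tau_q\leq\tau_p$, which is (1).

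For part (2), I fix $q\in(2,2^*]$ and consider any increasing sequence $p_n\nearrow q$ inside $(2,2^*)$. By part (1), $\lim_{n\to\infty}\tau_{p_n}\geq\tau_q$ is automatic. For the reverse inequality, given $\eps>0$, I choose $u\in E^+\setminus\{0\}$ with $\msf_q(u)<\tau_q+\eps$. By Lemma \ref{msj converge}, $\msj_{p_n}(u)\to\msj_q(u)$ strongly in $E^-$, hence $u+\msj_{p_n}(u)\to u+\msj_q(u)$ in $E$ and therefore in every $L^r$ with $r\in[1,2^*]$. A straightforward dominated-convergence argument, combined with $p_n\to q$, then gives
\[
\int_{S^m}H|u+\msj_{p_n}(u)|^{p_n}\,d\vol_{\ig_{S^m}}\ \longrightarrow\ \int_{S^m}H|u+\msj_q(u)|^{q}\,d\vol_{\ig_{S^m}},
\]
while the numerator $\|u\|^2-\|\msj_{p_n}(u)\|^2$ converges by the strong $E^-$-convergence. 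Hence $\msf_{p_n}(u)\to\msf_q(u)$, so $\tau_{p_n}\leq\msf_{p_n}(u)<\tau_q+2\eps$ for all large $n$. Letting $\eps\to 0$ gives $\lim_{n\to\infty}\tau_{p_n}=\tau_q$, which is (2).

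\textbf{Expected difficulty.} Both claims rest on Lemma \ref{msj converge}, which has already been established; with that convergence in hand, (1) reduces to a one-line application of monotonicity of $L^q$-norms on a probability space, and (2) reduces to continuity of the generalized Rayleigh quotient under the joint convergence of the spinor and of the exponent. The only mildly delicate point is the denominator convergence with a varying exponent $p_n$, which is handled by a standard argument using that $u+\msj_{p_n}(u)$ is bounded in $E\hookrightarrow L^{2^*}$.
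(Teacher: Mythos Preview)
Your proof is correct and follows essentially the same approach as the paper: part~(1) is the monotonicity of $L^q$-norms on the probability space $(S^m,H\,d\vol_{\ig_{S^m}})$, and part~(2) uses Lemma~\ref{msj converge} to pass to the limit in the Rayleigh quotient for a near-minimizer $u$. The only cosmetic difference is that the paper rewrites $\msf_{p'}(u)$ as a ratio times $\rr_p(u+\msj_{p'}(u))$ and bounds the latter by $\msf_p(u)$, whereas you argue directly that both numerator and denominator of $\msf_{p_n}(u)$ converge; the underlying ingredients are identical.
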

\begin{proof}
Since $(1)$ is evident as was already
shown in \eqref{msf-p-decreasing},
we only need to prove $(2)$.

Given $p\in(2,2^*]$, we choose $u\in E^+\setminus\{0\}$
such that $\msf_p(u)\leq\tau_p+\epsilon$. Observe that
for all $p'\leq p$
\[
\msf_{p'}(u)=\rr_{p'}(u+\msj_{p'}(u))
=\frac{\Big( \int_{S^m}H(\xi)|u+\msj_{p'}(u)|^p
d\vol_{\ig_{S^m}}
\Big)^{\frac2p}}{\Big( \int_{S^m}H(\xi)|u+\msj_{p'}(u)|^{p'}
d\vol_{\ig_{S^m}}
\Big)^{\frac2{p'}}}\rr_{p}(u+\msj_{p'}(u)).
\]
Thanks to Lemma \ref{msj converge}, the function
\[
p'\mapsto\Big( \int_{S^m}H(\xi)|u+\msj_{p'}(u)|^{p'}
d\vol_{\ig_{S^m}}\Big)^{\frac1{p'}}
\]
is continuous from the left. Hence, if $p'$ is sufficiently close
to $p$, then
\[
\tau_{p'}\leq\msf_{p'}(u)\leq\rr_{p}(u+\msj_{p'}(u))+\epsilon
\leq\tau_p+2\epsilon.
\]
Because $p\mapsto\tau_p$ is non-increasing, the statement
follows.
\end{proof}

\begin{Rem}\label{ground state value}
Recall that the sphere of constant sectional curvature $1$
carries a Killing spinor $\psi^*$ with length $1$
to the constant $-\frac12$, that is, $\psi^*$ satisfies
$|\psi^*|\equiv1$ and
\[
\nabla_X\psi^*=-\frac12 X\cdot\psi^*, \quad
\forall X\in\Ga(TM)
\]
where $\cdot$ denotes the Clifford multiplication.
Therefore we have  $D\psi^*=\frac m2\psi^*$.
And it follows from \cite[Section 4]{Ammann2009} that
\[
\inf_{u\in E^+\setminus\{0\}}\max_{v\in E^-}
\frac{\int_{S^m}(D(u+v),u+v)d\vol_{\ig_{S^m}}}
{\big(\int_{S^m}|u+v|^{2^*}d\vol_{\ig_{S^m}}
\big)^{\frac2{2^*}}}
=\frac{\int_{S^m}(D\psi^*,\psi^*)d\vol_{\ig_{S^m}}}
{\big(\int_{S^m}|\psi^*|^{2^*}d\vol_{\ig_{S^m}}
\big)^{\frac2{2^*}}}
=(\frac m2)\om_m^{\frac1m}.
\]
Then, we can derive that
$\tau_{2^*}\geq\big(\frac1{H_{max}} \big)^{\frac{m-1}m}\big(
\frac m2\big)\om_m^{\frac1m}$.
Thanks to the technical argument in \cite{AGHM}, we know
that $\tau_{2^*}\leq \big(\frac1{H_{max}} \big)^{\frac{m-1}m}\big(
\frac m2\big)\om_m^{\frac1m}$.
Hence we have
\begin{\equ}\label{tau value}
\tau_{2^*}=
 \Big(\frac1{H_{max}} \Big)^{\frac{m-1}m}\big(
\frac m2\big)\om_m^{\frac1m}.
\end{\equ}
\end{Rem}
%Moreover, since $H$ satisfies hypothesis $(H)$,
%it follows that $\tau_{2^*}$ is never achieved.
%Indeed, if $\tau_{2^*}$ is attained by  some spinor
%$\psi\in E$, then $\psi$ satisfies
%\[
%D\psi=H(\xi)|\psi|^{2^*-2}\psi \quad \text{and}\quad
%\int_{S^m}H(\xi)|\psi|^{2^*}d\vol_{\ig_{S^m}}=
%(\tau_{2^*})^m.
%\]
%And hence
%\[
%\tau_{2^*}\leq\max_{v\in E^-}
%\frac{\int_{S^m}(D(\psi^++v),\psi^++v)d\vol_{\ig_{S^m}}}
%{\big(\int_{S^m}H_{max}|\psi^++v|^{2^*}d\vol_{\ig_{S^m}}
%\big)^{\frac2{2^*}}}
%\leq\msf_{2^*}(\psi^+)=\tau_{2^*}.
%\]
%In particular, there exists $v_{\psi}\in E^-$ realizing
%the maximum in the above inequality. Analogous to
%\eqref{rr-derivative}, we have $\psi^++v_\psi$
%satisfies
%\[
%D(\psi^++v_\psi)=\lm H_{max}|\psi^++v_\psi|^{2^*-2}
%(\psi^++v_\psi)  \quad \text{on } S^m
%\]
%for some constant $\lm>0$. On the other hand,
%by the weak unique
%continuation property \cite[Theorem 2.1]{BBMW},
%we have $\psi^++v_\psi$ can not vanish on any
%non-empty open set. And hence, due to the fact
%$H\neq constant$,
%\[
%\tau_{2^*}=
%\frac{\int_{S^m}(D(\psi^++v_\psi),\psi^++v_\psi)
%d\vol_{\ig_{S^m}}}
%{\big(\int_{S^m}H_{max}|\psi^++v_\psi|^{2^*}
%d\vol_{\ig_{S^m}}
%\big)^{\frac2{2^*}}}
%<\msf_{2^*}(\psi^+)=\tau_{2^*}
%\]
%which is impossible.
%\end{Rem}

\section{Blow-up analysis}\label{Blow-up section}
In this section we choose $\{p_n\}$ to be an strictly increasing
sequence such that $\displaystyle\lim_{n\to\infty}p_n=2^*$.
In what follows, we shall investigate the possible convergent
properties of solutions to the equation
\begin{\equ}\label{pn-equ}
D\psi=H(\xi)|\psi|^{p_n-2}\psi \quad \text{on } S^m
\end{\equ}
with some specific energy constraints.  Our arguments will be
divided into three parts. In Subsection \ref{alternative sec}
we establish a kind of alternative behavior for solutions of
\eqref{pn-equ}, which shows either compactness or
blow-up phenomenon. In Subsection \ref{blow-up sec},
we describe the specific blow-up phenomenon which appears
if we exclude the compactness. And in
Subsection \ref{stereo sec} we deal with the stereographic
projected view of the blow-up behavior.

\subsection{An alternative property}\label{alternative sec}
As before, we will denote $\cl_{p_n}$ the energy functional
associated to Equation \eqref{pn-equ} and
$(h_{p_n},I_{p_n})$
the reduction couple for $\cl_{p_n}$. Our alternative
result comes as follows:

\begin{Prop}\label{alternative prop}
Suppose $\{\psi_n\}\subset E$ is a sequence such that
\begin{\equ}\label{key-assumption}
\frac1{2m}(\tau_{2^*})^m\leq\cl_{p_n}(\psi_n)
\leq\frac1{m}(\tau_{2^*})^m-\theta \quad \text{and} \quad
\cl_{p_n}'(\psi_n)\to0
\end{\equ}
for some constant $\theta>0$. Then, up to a subsequence,
either $\psi_n\rightharpoonup 0$ or
$\psi_n\to\psi_0\neq0$
in $E$.
\end{Prop}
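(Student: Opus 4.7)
The plan is a concentration--compactness alternative organized around the bubble threshold $\frac{1}{2m}\tau_{2^*}^m$: the ceiling $\cl_{p_n}(\psi_n)\le\frac{1}{m}\tau_{2^*}^m-\theta$ forbids coexistence of one nontrivial weak limit and a non-vanishing residual, since together they would carry at least two bubbles' worth of energy and overshoot the ceiling. By Lemma \ref{boundedness}, $\|\psi_n\|\le C$, so up to a subsequence $\psi_n\rightharpoonup\psi_0$ in $E$, with $\psi_n\to\psi_0$ a.e.\ and in every $L^q$, $q<2^*$, by Rellich--Kondrakov. Testing $\cl_{p_n}'(\psi_n)\to 0$ on fixed smooth spinors (the linear term passes by weak convergence, the nonlinearity by Vitali using $p_n\nearrow 2^*$ and uniform $L^{2^*}$ bounds) yields $D\psi_0=H|\psi_0|^{2^*-2}\psi_0$, i.e.\ $\cl'(\psi_0)=0$. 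If $\psi_0=0$ the first alternative holds. If $\psi_0\neq 0$, set $u_0=\psi_0^+$; uniqueness in Proposition \ref{reduction} forces $\psi_0^-=h_{2^*}(u_0)$ and $u_0\in\msn_{2^*}$, so Lemma \ref{msf=I} together with $\msf_{2^*}(u_0)\ge\tau_{2^*}$ gives $\cl(\psi_0)=I_{2^*}(u_0)=\frac{1}{2m}\msf_{2^*}(u_0)^m\ge \frac{1}{2m}\tau_{2^*}^m$.

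\emph{Splitting and residual bubble.} Let $\varphi_n=\psi_n-\psi_0\rightharpoonup 0$. Weak convergence splits the indefinite quadratic form $\int_{S^m}(D\cdot,\cdot)\,d\vol_{\ig_{S^m}}$, a Brezis--Lieb argument (a.e.\ convergence, uniform $L^{2^*}$ bounds, $p_n\to 2^*$) splits the nonlinear term, and the same splitting applied to the derivative gives $\cl_{p_n}'(\psi_0)\to\cl'(\psi_0)=0$ in $E^*$; altogether
\[
  \cl_{p_n}(\psi_n)=\cl_{p_n}(\psi_0)+\cl_{p_n}(\varphi_n)+o(1),\qquad \cl_{p_n}'(\varphi_n)\to 0\ \text{in}\ E^*.
\]
Suppose toward a contradiction $\varphi_n\not\to 0$ in $E$. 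The closing remark in the proof of Lemma \ref{boundedness} then forces $\liminf_{n\to\infty}\cl_{p_n}(\varphi_n)>0$, and Corollary \ref{key2} produces $\tilde u_n\in\msn_{p_n}$ with $I_{p_n}(\tilde u_n)\le \cl_{p_n}(\varphi_n)+O\bigl(\|\cl_{p_n}'(\varphi_n)\|_{E^*}^2\bigr)$. Combining Lemma \ref{msf=I} with definition \eqref{tau} of $\tau_{p_n}$, the bound $\tau_{p_n}\ge\tau_{2^*}$, and the left-continuity of Proposition \ref{key3},
\[
  I_{p_n}(\tilde u_n)\ \ge\ \frac{p_n-2}{2p_n}\,\tau_{p_n}^{\frac{p_n}{p_n-2}}\ \longrightarrow\ \frac{1}{2m}\tau_{2^*}^m,
\]
so $\cl_{p_n}(\varphi_n)\ge \frac{1}{2m}\tau_{2^*}^m+o(1)$. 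On the other hand, the energy splitting and the upper bound in \eqref{key-assumption} give
\[
  \cl_{p_n}(\varphi_n)=\cl_{p_n}(\psi_n)-\cl_{p_n}(\psi_0)+o(1)\ \le\ \tfrac{1}{m}\tau_{2^*}^m-\theta-\tfrac{1}{2m}\tau_{2^*}^m+o(1)=\tfrac{1}{2m}\tau_{2^*}^m-\theta+o(1),
\]
a contradiction for $n$ large. Hence $\varphi_n\to 0$ in $E$, i.e.\ $\psi_n\to\psi_0$ strongly, which is the second alternative.

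\emph{Main obstacle.} The analytic crux is to carry out a Brezis--Lieb type splitting simultaneously for three moving pieces: the indefinite quadratic form $\int(D\cdot,\cdot)$, the slightly subcritical nonlinearity with moving exponent $p_n\nearrow 2^*$, and the differential $\cl_{p_n}'$ in the \emph{strong} dual norm $E^*$ (rather than only pointwise on a dense family of test spinors). Once this and the convergence $\tau_{p_n}\to\tau_{2^*}$ supplied by Proposition \ref{key3} are in hand, the two-bubble energy ceiling closes the argument mechanically.
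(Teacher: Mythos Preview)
Your proposal is correct and follows essentially the same route as the paper: weak limit $\psi_0$ solves the critical equation, the residual $\varphi_n=\psi_n-\psi_0$ is a Palais--Smale sequence for $\cl_{p_n}$, and if both $\psi_0\neq0$ and $\varphi_n\not\to0$ then each carries at least $\frac{1}{2m}\tau_{2^*}^m$ of energy, overshooting the ceiling. The only cosmetic difference is that the paper writes out the strong $E^*$-splitting of the nonlinearity via an explicit Egorov argument (its estimate around \eqref{integral1}) rather than invoking a Brezis--Lieb/Vitali principle by name, and it phrases the lower bound for $\psi_0$ directly through the inequality $\tau_{2^*}\big(\int H|\psi_0|^{2^*}\big)^{2/2^*}\le\int(D\psi_0,\psi_0)$ instead of routing through $\msn_{2^*}$ and Lemma~\ref{msf=I}.
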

\begin{proof}
Notice that $\{\psi_n\}$ is bounded in $E$ (by
Lemma \ref{boundedness}), and we may then
assume that $\psi_n\rightharpoonup \psi_0$ in $E$ as
$n\to\infty$ with some $\psi_0$ satisfying the equation
\begin{\equ}\label{eq0}
D\psi_0=H(\xi)|\psi_0|^{2^*-2}\psi_0 \quad \text{on } S^m.
\end{\equ}

Set $\bar\psi_n=\psi_n-\psi_0$. Then we have $\bar\psi_n$
satisfy
\[
\aligned
D\bar\psi_n&=H(\xi)|\psi_n|^{p_n-2}\psi_n
-H(\xi)|\bar\psi_n|^{p_n-2}\bar\psi_n
-H(\xi)|\psi_0|^{p_n-2}\psi_0 \\[0.4em]
&\qquad +H(\xi)|\psi_0|^{p_n-2}\psi_0
-H(\xi)|\psi_0|^{2^*-2}\psi_0\\[0.4em]
&\qquad +H(\xi)|\bar\psi_n|^{p_n-2}\bar\psi_n+o_n(1)
\endaligned
\]
where $o_n(1)\to0$ as $n\to\infty$ in $E^*$.

To proceed, we set
\[
\Phi_n=H(\xi)|\psi_n|^{p_n-2}\psi_n
-H(\xi)|\bar\psi_n|^{p_n-2}\bar\psi_n
-H(\xi)|\psi_0|^{p_n-2}\psi_0
\]
It is easy to see that there exists $C>0$ (independent of $n$)
such that
\begin{\equ}\label{e4}
|\Phi_n|\leq C|\bar\psi_n|^{p_n-2}|\psi_0|
+C|\psi_0|^{p_n-2}|\bar\psi_n|.
\end{\equ}
Thanks to Egorov theorem, for any $\epsilon>0$, there exists
$\Om_\epsilon\subset S^m$ such that
$\meas\{S^m\setminus \Om_\epsilon\}<\epsilon$ and
$\bar\psi_n\to0$ uniformly on $\Om_\epsilon$ as
$n\to\infty$. Therefore, by \eqref{e4} and the H\"older
inequality, we have
\begin{eqnarray}\label{integral1}
\real\int_{S^m}(\Phi_n,\va)d\vol_{\ig_{S^m}}&=&
\real\int_{S^m\setminus \Om_\epsilon}
(\Phi_n,\va)d\vol_{\ig_{S^m}}
+\real\int_{\Om_\epsilon}(\Phi_n,\va)d\vol_{\ig_{S^m}}
\nonumber\\
&\leq&C\Big(\int_{S^m\setminus \Om_\epsilon}
|\bar\psi_n|^{2^*}d\vol_{\ig_{S^m}}
\Big)^{\frac{p_n-2}{2^*}}
\Big(\int_{S^m\setminus \Om_\epsilon}
|\psi_0|^{2^*}d\vol_{\ig_{S^m}}
\Big)^{\frac1{2^*}} \|\va\|  \nonumber\\
& & +\,C\Big(\int_{S^m\setminus \Om_\epsilon}
|\psi_0|^{2^*}d\vol_{\ig_{S^m}}
\Big)^{\frac{p_n-2}{2^*}}
\Big(\int_{S^m\setminus \Om_\epsilon}
|\bar\psi_n|^{2^*}d\vol_{\ig_{S^m}}
\Big)^{\frac1{2^*}} \|\va\|  \nonumber\\
& &+\int_{\Om_\epsilon}|\Phi_n|\cdot|\va|
d\vol_{\ig_{S^m}}.
\end{eqnarray}
for arbitrary $\va\in E$ with $\|\va\|\leq1$. It is
evident that
the last integral in \eqref{integral1} converges to $0$ as
$n\to\infty$ and the remaining integrals tends to $0$
uniformly in $n$ as $\epsilon\to0$. Thus, we get
$\Phi_n=o_n(1)$ in $E^*$. Noting that
$q\mapsto H(\cdot)|\psi_0|^{q-2}\psi_0$ is continuous
in $E^*$, hence we have
\begin{\equ}\label{eq1}
D\bar\psi_n=H(\xi)|\bar\psi_n|^{p_n-2}\bar\psi_n+o_n(1)
\quad \text{in } E^*.
\end{\equ}

Now assume $\psi_0\neq0$. If there exists a subsequence
such that $\cl_{p_n}(\bar\psi_n)\to0$, then it follows from
the proof of Lemma \ref{boundedness} that
we must have $\bar\psi_n\to0$. So we now assume that,
up to any subsequence, $\cl_{p_n}(\bar\psi_n)\not\to0$.

Since $\psi_0$ is a non-trivial solution to \eqref{eq0}, by
Lemma \ref{msf=I} and the definition of $\tau_{2^*}$
(c.f. \eqref{tau}), we have
\[
\tau_{2^*}\Big( \int_{S^m}H(\xi)|\psi_0|^{2^*}
d\vol_{\ig_{S^m}} \Big)^{\frac2{2^*}}\leq
\int_{S^m}(D\psi_0,\psi_0)d\vol_{\ig_{S^m}}
=\int_{S^m}H(\xi)|\psi_0|^{2^*}
d\vol_{\ig_{S^m}}
\]
and thus
\begin{\equ}\label{e5}
\int_{S^m}(D\psi_0,\psi_0)d\vol_{\ig_{S^m}}
\geq(\tau_{2^*})^{\frac{2^*}{2^*-2}}
\end{\equ}
On the other hand, by \eqref{eq1} and
$\cl_{p_n}(\bar\psi_n)\not\to0$, we have
\[
c_1\leq\cl_{p_n}(\bar\psi_n)\leq c_2 \quad \text{and} \quad
\cl_{p_n}'(\bar\psi_n)\to0
\]
for some $c_1,c_2>0$. Therefore, by Corollary \ref{key2},
Lemma \ref{msf=I} and the uniform boundedness of
the second derivatives of $\cl_{p_n}$ near $\bar\psi_n$,
we can conclude
\[
\tau_{p_n}\leq\msf_{p_n}(\bar\psi_n^+)=\max_{t>0}
\Big( \frac{2p_n}{p_n-2} I_{p_n}(t\bar\psi_n^+)
\Big)^{\frac{p_n-2}{p_n}}
\leq\Big( \frac{2p_n}{p_n-2} \cl_{p_n}(\bar\psi_n)+o_n(1)
\Big)^{\frac{p_n-2}{p_n}}.
\]
This, together with Proposition \ref{key3}, implies
\begin{\equ}\label{e6}
\int_{S^m}(D\bar\psi_n,\bar\psi_n)d\vol_{\ig_{S^m}}
\geq\big(\tau_{p_n}\big)^{\frac{p_n}{p_n-2}}+o_n(1)
=(\tau_{2^*})^{\frac{2^*}{2^*-2}}+o_n(1).
\end{\equ}
And we thus have
\[
\aligned
\cl_{p_n}(\psi_n)&=\frac{p_n-2}{2p_n}
\int_{S^m}(D\psi_n,\psi_n)d\vol_{\ig_{S^m}} + o_n(1)\\
&=\frac{p_n-2}{2p_n}
\int_{S^m}(D\bar\psi_n,\bar\psi_n)d\vol_{\ig_{S^m}}
+\frac{p_n-2}{2p_n}
\int_{S^m}(D\psi_0,\psi_0)d\vol_{\ig_{S^m}}
+ o_n(1) \\
&\geq \frac1m(\tau_{2^*})^{m}+o_n(1)
\endaligned
\]
where the last inequality follows from \eqref{e5} and \eqref{e6}.
This contradicts \eqref{key-assumption}.
\end{proof}

\subsection{Blow-up phenomenon}\label{blow-up sec}

Let $\{\psi_n\}\subset E$ fulfill the assumption
of Proposition \ref{alternative prop}, that is
\eqref{key-assumption}. If $\{\psi_n\}$ has a subsequence
which is compact in $E$, then the same subsequence converges
and the limit spinor $\psi_0$ is a non-trivial solution
to \eqref{eq0}.
Thus we are interested in the case where any subsequence
of $\{\psi_n\}$ does not converge. From now on,
by Proposition \ref{alternative prop}, we may assume
$\psi_n\rightharpoonup0$ in $E$ as $n\to\infty$.

To begin with, we shall first introduce an useful concept
of blow-up set of $\{\psi_n\}$:
\[
\Ga:=\Big\{
a\in M:\, \varliminf_{r\to0}\varliminf_{n\to\infty}
\int_{B_r(a)}|\psi_n|^{p_n}d\vol_{\ig_{S^m}}\geq \de_0
\Big\}
\]
where $B_r(a)\subset S^m$ is the distance ball of radius $r$
with respect to the metric $\ig_{S^m}$ and $\de_0>0$
is a positive constant. The value of $\de_0$ can be determined
in the sense of the following lemma:

\begin{Lem}\label{blow-up set def}
Let $\{\psi_n\}$ be as above. Then there exists $\de_0>0$
such that $\Ga\neq\emptyset$.
\end{Lem}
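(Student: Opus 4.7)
The plan is a contradiction argument: suppose $\Gamma=\emptyset$ for every $\delta_0>0$ and derive a contradiction with the lower energy bound in \eqref{key-assumption}. First, I would record a uniform positive lower bound
\[
\int_{S^m}|\psi_n|^{p_n}\,d\vol_{\ig_{S^m}}\ge c_0>0
\]
for large $n$, which follows from the identity $\cl_{p_n}(\psi_n)-\tfrac12\cl_{p_n}'(\psi_n)[\psi_n]=(\tfrac12-\tfrac{1}{p_n})\int_{S^m}H(\xi)|\psi_n|^{p_n}\,d\vol_{\ig_{S^m}}$ together with $\cl_{p_n}(\psi_n)\ge\frac{1}{2m}(\tau_{2^*})^m$, $\cl_{p_n}'(\psi_n)\to 0$ in $E^*$, the bound on $\|\psi_n\|$ from Lemma \ref{boundedness}, and $\frac12-\frac1{p_n}\to\frac{1}{2m}$.

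Next, after extracting a subsequence (still denoted $\psi_n$), the uniformly bounded Radon measures $\mu_n:=|\psi_n|^{p_n}\,d\vol_{\ig_{S^m}}$ converge weakly-$*$ to a finite positive Borel measure $\mu$ on $S^m$ with $\mu(S^m)\ge c_0$. A standard Portmanteau computation (using lower semicontinuity on the open $B_r(a)$, upper semicontinuity on the closed $\overline{B_r(a)}$, and $\mu(\overline{B_r(a)})\to\mu(\{a\})$ as $r\to 0^+$) gives
\[
\varliminf_{r\to 0}\varliminf_{n\to\infty}\mu_n(B_r(a))=\mu(\{a\}),\qquad a\in S^m.
\]
The contradiction hypothesis therefore forces $\mu$ to be non-atomic, so $M(r):=\sup_{a\in S^m}\mu(\overline{B_r(a)})\to 0$ as $r\to 0^+$ (otherwise a limit point of maximizers would itself carry an atom).

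The decisive step is an $\epsilon$-regularity estimate for \eqref{pn-equ}: there exists $\epsilon_0>0$, independent of $n$, such that whenever $\mu_n(B_r(a))<\epsilon_0$ for all large $n$, the spinors $\{\psi_n\}$ are uniformly bounded on $B_{r/2}(a)$; combined with $\psi_n\rightharpoonup 0$ in $E$ and the compact embedding $E\hookrightarrow L^q$ for $q<2^*$, this forces $|\psi_n|\to 0$ uniformly on $B_{r/2}(a)$. Granted this, I would choose $r$ so small that $M(2r)<\epsilon_0/2$, perturb the radii so that $\mu(\partial B_{r_i}(a_i))=0$, and take a finite cover $\{B_{r/2}(a_i)\}_{i=1}^N$ of $S^m$; weak-$*$ convergence yields $\mu_n(\overline{B_{r_i}(a_i)})<\epsilon_0$ for large $n$, whence $\mu_n(S^m)\le\sum_i\mu_n(B_{r/2}(a_i))\to 0$, contradicting Step 1 and providing any threshold $\delta_0\in(0,\epsilon_0/2]$. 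The principal obstacle is the $\epsilon$-regularity estimate itself; the natural route is to pass to local coordinates via the Bourguignon--Gauduchon trivialization (Subsection \ref{BG sec}), reduce \eqref{pn-equ} to a perturbed Euclidean Dirac equation, and run a Moser/Brezis--Kato iteration in which the small-$L^{p_n}$-mass hypothesis is used to absorb the critical nonlinear term into the linear part.
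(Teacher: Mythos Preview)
Your contradiction strategy is sound and ultimately works, but it is considerably heavier than what the paper does, and one link in your chain is stated too loosely.

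The paper bypasses the measure--theoretic layer (weak-$*$ limits, atoms, Portmanteau) and the $L^\infty$ $\epsilon$-regularity entirely. It works directly at the $H^{1/2}$ level: for each point $a$ one takes a cutoff $\eta$ supported in $B_{2r_0}(a)$, writes $D(\eta\psi_n)=\eta H|\psi_n|^{p_n-2}\psi_n+\nabla\eta\cdot\psi_n+o_n(1)$, and uses the elementary estimate $\|\psi\|\le C\|D\psi\|_{E^*}+C|\psi|_2$. The nonlinear term is then controlled in $E^*$ by H\"older and Sobolev as
\[
\big\|\eta H|\psi_n|^{p_n-2}\psi_n\big\|_{E^*}\le C\,\delta_0^{(p_n-2)/p_n}\,\|\eta\psi_n\|,
\]
so for $\delta_0$ small it is absorbed into the left-hand side; the remaining terms $|\nabla\eta\cdot\psi_n|_2$ and $|\eta\psi_n|_2$ vanish by $\psi_n\rightharpoonup 0$ and the compact embedding $E\hookrightarrow L^2$. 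Hence $\|\eta\psi_n\|\to 0$ locally, and a finite cover gives $\psi_n\to 0$ in $E$, contradicting \eqref{key-assumption}. This is exactly the ``absorption'' you isolate as the decisive step, but carried out in $H^{1/2}$ rather than $L^\infty$, which makes the whole argument a few lines.

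The soft spot in your version is the inference ``uniform $L^\infty$ bound on $B_{r/2}(a)$ $+$ $\psi_n\rightharpoonup 0$ in $E$ $+$ compact $L^q$ embedding $\Rightarrow$ $|\psi_n|\to 0$ uniformly on $B_{r/2}(a)$''. An $L^\infty$ bound alone does not upgrade $L^q$ convergence to uniform convergence; you would either need to bootstrap to a uniform $C^{0,\alpha}$ bound (Arzel\`a--Ascoli) or, more simply, interpolate the $L^\infty$ bound with $L^2\to 0$ to get $\int_{B_{r/2}}|\psi_n|^{p_n}\to 0$, which is all you actually need for the final contradiction. Either fix is routine, but as written the step is a gap. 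Replacing your Moser/Brezis--Kato plan by the paper's $H^{1/2}$ absorption removes this issue and shortens the proof substantially.
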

\begin{proof}
Assume to the contrary that  $\Ga=\emptyset$ for any choice
of $\de_0$ (up to any subsequence of $\{\psi_n\}$).
Then we may fix $\de_0$ arbitrary small.
And we have, for any $a\in M$, there exists $r_0>0$ such that
\begin{\equ}\label{e7}
\int_{B_{2r_0}(a)}|\psi_n|^{p_n}
d\vol_{\ig_{S^m}}<\de_0.
\end{\equ}
for all $n$ large.

Let us take $\eta\in C^\infty(S^m)$ such that $\eta\equiv1$
on $B_{r_0}(a)$ and $\eta\equiv 0$ on
$S^m\setminus B_{2r_0}(a)$. Since
$\cl_{p_n}'(\psi_n)=o_n(1)$ in $E^*$, we can obtain
\begin{eqnarray*}
D(\eta \psi_n)&=& \eta D\psi_n+\nabla\eta\cdot\psi_n\\
&=&\eta(\xi) H(\xi)|\psi_n|^{p_n-2}\psi_n
+\nabla\eta\cdot\psi_n + o_n(1)
\end{eqnarray*}
where $\cdot$ denotes the Clifford multiplication and
$o_n(1)\to0$ in $E^*$ as $n\to\infty$.

Noting that there exists $C>0$ such that
\[
\|\psi\|\leq C\|D\psi\|_{E^*}+ C|\psi|_2 \quad
\forall \psi\in E.
\]
Thus, we have
\begin{eqnarray}\label{e8}
\|\eta\psi_n\|&\leq& C\|D(\eta\psi_n)\|_{E^*}
+C|\eta\psi_n|_2 \nonumber\\[0.4em]
&\leq& C\big\|\eta(\xi) H(\xi)|\psi_n|^{p_n-2}\psi_n
+\nabla\eta\cdot\psi_n \big\|_{E^*}+C|\eta\psi_n|_2
+o_n(1) \nonumber\\[0.4em]
&\leq& C\big\|\eta(\xi)
H(\xi)|\psi_n|^{p_n-2}\psi_n\big\|_{E^*}
+C\big\|\nabla\eta\cdot\psi_n \big\|_{E^*}
+C|\eta\psi_n|_2 +o_n(1) .
\end{eqnarray}
Remark that, by the Sobolev embedding
$L^2\hookrightarrow E^*$, we have
$\big\|\nabla\eta\cdot\psi_n \big\|_{E^*}\leq
C'|\nabla\eta\cdot\psi_n|_2$ for some constant $C'>0$.
Moreover, by the Sobolev embedding
$E\hookrightarrow L^{p_n}$ and the
H\"older inequality, there holds
\begin{eqnarray}\label{e9}
\real\int_{S^m}\eta(\xi)H(\xi)|\psi_n|^{p_n-2}(\psi_n,\va)
d\vol_{\ig_{S^m}}&\leq& |H|_\infty|\va|_{p_n}
|\eta\psi_n|_{p_n}\Big( \int_{B_{2r_0}(a)}|\psi_n|^{p_n}
d\vol_{\ig_{S^m}}\Big)^{\frac{p_n-2}{p_n}} \nonumber\\
&\leq& C'' |H|_\infty \de_0^{\frac{p_n-2}{p_n}}
\|\eta\psi_n\|\cdot\|\va\|
\end{eqnarray}
for all $\va\in E$,
where $C''>0$ depends only on $S^m$ and, in the last
inequality, we have used \eqref{e7}.

Recall that we have $\{p_n\}$ is a strictly increasing sequence
such that $p_n\to 2^*$ as $n\to\infty$.
Therefore, we may choose $\de_0$ so small that
$CC''|H|_\infty \de_0^{\frac{p_n-2}{p_n}}<\frac12$. Then
by \eqref{e8} and \eqref{e9}, we get
\[
\|\eta\psi_n\|\leq CC'|\nabla\eta\cdot\psi_n|_2
+C|\eta\psi_n|_2+o_n(1).
\]
Let us mention that we have assumed
$\psi_n\rightharpoonup0$. Hence, by the compact embedding
$E\hookrightarrow L^2$, we are arrived at
$\|\eta\psi_n\|=o_n(1)$ as $n\to\infty$.

Since $a\in  S^m$ is arbitrary and $S^m$ is compact, we can
conclude $\psi_n\to0$ in $E$ which contradicts
\eqref{key-assumption}.
\end{proof}

Another useful concept in this context is the concept of the
concentration function introduced in \cite{Lions1, Lions2, BC}.
For $r\geq0$, let us define
\[
\Theta_n(r)=\sup_{a\in S^m}\int_{B_r(a)}|\psi_n|^{p_n}
d\vol_{\ig_{S^m}}.
\]
Choose $\bar\de>0$ small, say $\bar\de<\de_0$ where
$\de_0$ is as in Lemma \ref{blow-up set def}. Then there
exist a decreasing sequence $\{R_n\}\subset\R$, $R_n\to0$
as $n\to\infty$ and $\{a_n\}\subset S^m$ such that
\begin{\equ}\label{choice of an}
\Theta_n(R_n)=\int_{B_{R_n}(a_n)}|\psi_n|^{p_n}
d\vol_{\ig_{S^m}}=\bar\de.
\end{\equ}
Up to a subsequence if necessary, we assume that
$a_n\to a\in S^m$ as $n\to\infty$.

Now, let us define the rescaled geodesic normal coordinates
near each $a_n$ via the formula
\[
\mu_n(x)=\exp_{a_n}(R_n x).
\]
Denoting $B_R^0=\big\{ x\in\R^m:\, |x|<R\big\}$, where
$|\cdot|$ is the Euclidean norm in $\R^m$, we have a
conformal equivalence $(B_R^0, \, R_n^{-2}\mu_n^*\ig_{S^m})
\cong (B_{R_nR}(a_n),\, \ig_{S^m})\subset S^m$ for all
large $n$.

For ease of notation, we set
$\ig_n=R_n^{-2}\mu_n^*\ig_{S^m}$. Writing the metric
$\ig_{S^m}$ in geodesic normal coordinates centered in $a$,
one immediately sees that, for any $R>0$, $\ig_n$ converges
to the Euclidean metric in $C^\infty(B_R^0)$ as $n\to\infty$.

Now, following Proposition \ref{conformal formula} and
the idea of local trivialization introduced
in Subsection \ref{BG sec},
we can conclude that the coordinate map $\mu_n$
induces a spinor identification
 $\ov{(\mu_n)}_*:\mbs_x(B_R^0,\ig_n)\to
\mbs_{\mu_n(x)}(B_{R_nR}(a_n),\ig_{S^m})$.
If we define spinors $\phi_n$ on $B_R^0$ by
\begin{\equ}\label{phi-n}
\phi_n=R_n^{\frac{m-1}2}\ov{(\mu_n)}_*^{\,-1}\circ\psi_n
\circ\mu_n,
\end{\equ}
then a straightforward calculation shows that
\begin{\equ}\label{c1}
D_{\ig_n}\phi_n=R_n^{\frac{m+1}2}\ov{(\mu_n)}_*^{\,-1}
\circ(D\psi_n)\circ\mu_n,
\end{\equ}
\begin{\equ}\label{c2}
\int_{B_R^0}(D_{\ig_n}\phi_n,\phi_n)d\vol_{\ig_n}
=\int_{B_{R_nR}(a_n)}(D\psi_n,\psi_n)d\vol_{\ig_{S^m}},
\end{\equ}
\begin{\equ}\label{c3}
\int_{B_R^0}|\phi_n|^{2^*}d\vol_{\ig_n}=
\int_{B_{R_nR}(a_n)}|\psi_n|^{2^*}d\vol_{\ig_{S^m}},
\end{\equ}
\begin{\equ}\label{c4}
\int_{B_R^0}|\phi_n|^{p_n}d\vol_{\ig_n}
=R_n^{-\frac{m-1}2(2^*-p_n)}
\int_{B_{R_nR}(a_n)}|\psi_n|^{p_n}d\vol_{\ig_{S^m}}.
\end{\equ}
Moreover, since $\{\psi_n\}$ is bounded in $E$, we have
\begin{\equ}\label{c5}
\sup_{n\geq1}\int_{B_R^0}|\phi_n|^{2^*}d\vol_{\ig_n}\leq
\sup_{n\geq1}\int_{S^m}|\psi_n|^{2^*}d\vol_{\ig_{S^m}}
<+\infty
\end{\equ}
for any $R>0$.

\begin{Lem}\label{concentration parameter}
There is $\bar\lm>0$ such that
\[
\bar\lm\leq\varliminf_{n\to\infty}R_n^{\frac{m-1}2(2^*-p_n)}
\leq\varlimsup_{n\to\infty}R_n^{\frac{m-1}2(2^*-p_n)}\leq1.
\]
\end{Lem}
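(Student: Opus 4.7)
The plan is to exploit the rescaling identity \eqref{c4} at the unit radius together with the concentration normalization \eqref{choice of an} and the uniform $L^{2^*}$ control \eqref{c5}.

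For the upper bound, I would observe that since $R_n \to 0$ we have $R_n \in (0,1)$ for all $n$ large, and the exponent $\frac{m-1}{2}(2^* - p_n)$ is strictly positive because $p_n \in (2,2^*)$. Hence $R_n^{\frac{m-1}{2}(2^*-p_n)} \leq 1$ eventually, giving the upper inequality.

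For the lower bound, I would take $R = 1$ in \eqref{c4} and combine with \eqref{choice of an} to get
\[
R_n^{-\frac{m-1}{2}(2^*-p_n)}\,\bar\de \;=\; \int_{B_1^0}|\phi_n|^{p_n}\,d\vol_{\ig_n}.
\]
Next, I would apply H\"older's inequality with conjugate exponents $2^*/p_n$ and $2^*/(2^*-p_n)$ on the right:
\[
\int_{B_1^0}|\phi_n|^{p_n}\,d\vol_{\ig_n} \;\leq\;
\Big(\int_{B_1^0}|\phi_n|^{2^*}\,d\vol_{\ig_n}\Big)^{\!p_n/2^*}\!
\big(\vol_{\ig_n}(B_1^0)\big)^{(2^*-p_n)/2^*}.
\]
By \eqref{c5}, the first factor is bounded uniformly in $n$. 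Since $\ig_n$ converges to the Euclidean metric in $C^\infty$ on $B_1^0$, $\vol_{\ig_n}(B_1^0)$ converges to the Euclidean volume of the unit ball, hence is uniformly bounded as well. Thus the right-hand side is bounded by some constant $K$ independent of $n$, and we conclude
\[
R_n^{\frac{m-1}{2}(2^*-p_n)} \;\geq\; \bar\de/K \;>\; 0,
\]
which yields the claim with $\bar\lm := \bar\de/K$ (or any smaller positive number).

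There is no real obstacle here; the only thing to check carefully is that the H\"older exponents and the conversion factors between $\ig_{S^m}$ and $\ig_n$ combine cleanly, which is exactly the content of \eqref{c3}--\eqref{c5}. The key conceptual point is that the concentration of $L^{p_n}$-mass at scale $R_n$ cannot vanish in the rescaled picture: if $R_n^{\frac{m-1}{2}(2^*-p_n)}$ were to go to zero, the rescaled spinors $\phi_n$ would carry a divergent amount of $L^{p_n}$-energy on $B_1^0$, contradicting the uniform $L^{2^*}$ bound via H\"older.
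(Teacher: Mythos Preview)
Your argument is correct and essentially matches the paper's proof: the lower bound is obtained identically via \eqref{c4}, \eqref{choice of an}, H\"older, and \eqref{c5}, while for the upper bound the paper phrases the same observation ($R_n<1$ and positive exponent force the power below $1$) as a short contradiction argument. Nothing needs to change.
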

\begin{proof}
Since we have
\[
\int_{B_{R_n}(a_n)}|\psi_n|^{p_n}
d\vol_{\ig_{S^m}}=\bar\de,
\]
it follows from \eqref{c4} and H\"older inequality that
\[
\bar\de=\int_{B_{R_n}(a_n)}|\psi_n|^{p_n}
d\vol_{\ig_{S^m}}
\leq\Big( \int_{B_1^0}|\phi_n|^{2^*}d\vol_{\ig_n}
\Big)^{\frac{p_n}{2^*}}\Big( \int_{B_1^0}d\vol_{\ig_n}
\Big)^{\frac{2^*-p_n}{2^*}}
R_n^{\frac{m-1}2(2^*-p_n)}.
\]
Noting that $\ig_n$ converges to the Euclidean metric
in the $C^\infty$-topology on $B_1^0$, we can conclude
immediately from $p_n\to2^*$ and \eqref{c5} that
\[
\bar\de\leq C \cdot R_n^{\frac{m-1}2(2^*-p_n)}
\]
for some constant $C>0$.

On the other hand, suppose there exists some $\de>0$
such that $R_n^{\frac{m-1}2(2^*-p_n)}\geq 1+\de$
for all large $n$. Then, we must have
\[
\ln R_n\geq \frac{2\ln(1+\de)}{(m-1)(2^*-p_n)}\to+\infty
\quad \text{as } n\to\infty.
\]
This implies $R_n\to+\infty$ which is absurd.
\end{proof}

Moreover, we have

\begin{Lem}\label{converge in H-loc}
Let $\{\phi_n\}$ be defined in \eqref{phi-n}. Define
\[
\bar L_n=D_{\ig_n}\phi_n-R_n^{\frac{m-1}2(2^*-p_n)}
H\circ\mu_n(\cdot)|\phi_n|^{p_n-2}\phi_n
\in H^{-\frac12}_{loc}(\R^m,\mbs_m).
\]
Then $\bar L_n\to0$ in $H^{-\frac12}_{loc}(\R^m,\mbs_m)$
in the sense that, for any $R>0$, there holds
\[
\sup\big\{ \inp{\bar L_n}{\va}:\, \va\in H^{\frac12}
(\R^m,\mbs_m) ,\, \supp\va\subset B_R^0,
\|\va\|_{H^{\frac12}}\leq1\big\}\to0
\]
as $n\to\infty$.
\end{Lem}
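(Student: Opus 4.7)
The heart of the matter is that $\bar L_n$ is nothing but the conformal--rescaling image of the "residual"
\[
L_n := D\psi_n - H(\xi)|\psi_n|^{p_n-2}\psi_n \in E^*,
\]
which tends to zero in $E^*=H^{-\frac12}(S^m,\mbs(S^m))$ (directly, since $\{\psi_n\}$ solves \eqref{pn-equ}, or, more generally, from the hypothesis $\cl'_{p_n}(\psi_n)\to 0$ used in the blow-up analysis). The plan is therefore to transfer the smallness of $L_n$ to $\bar L_n$ via the pair (Bourguignon--Gauduchon trivialization, conformal Dirac formula).

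First, using Proposition \ref{conformal formula} applied to $\ig_n=R_n^{-2}\mu_n^*\ig_{S^m}$ and the trivialization recalled in Subsection \ref{BG sec}, I verify the algebraic identity
\[
R_n^{\frac{m+1}{2}}\ov{(\mu_n)}_*^{\,-1}\!\circ\!\bigl(H|\psi_n|^{p_n-2}\psi_n\bigr)\!\circ\!\mu_n = R_n^{\frac{m-1}{2}(2^*-p_n)}\,H\!\circ\!\mu_n(\cdot)\,|\phi_n|^{p_n-2}\phi_n,
\]
by combining the spinor rescaling $\phi_n=R_n^{\frac{m-1}{2}}\ov{(\mu_n)}_*^{\,-1}\!\psi_n\circ\mu_n$ with \eqref{c1}; the exponent check reduces to the elementary equality $\frac{m+1}{2}-\frac{(m-1)(p_n-1)}{2}=\frac{m-1}{2}(2^*-p_n)$. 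Consequently
\[
\bar L_n = R_n^{\frac{m+1}{2}}\,\ov{(\mu_n)}_*^{\,-1}\!\circ L_n\circ\mu_n.
\]

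Next, for any test spinor $\va\in H^{\frac12}(\R^m,\mbs_m)$ with $\supp\va\subset B_R^0$, I introduce the push--forward
\[
\tilde\va_n(y) := R_n^{-\frac{m-1}{2}}\,\ov{(\mu_n)}_*\bigl(\va\circ\mu_n^{-1}(y)\bigr),\qquad y\in B_{R_nR}(a_n),
\]
extended by zero outside; a change of variable $y=\mu_n(x)$ using $d\vol_{\mu_n^*\ig_{S^m}}=R_n^m\,d\vol_{\ig_n}$ together with the exponent computation above shows
\[
\bigl\langle \bar L_n,\va\bigr\rangle_{\R^m}=\bigl\langle L_n,\tilde\va_n\bigr\rangle_{S^m}.
\]
Thus the goal reduces to the uniform bound $\|\tilde\va_n\|_{H^{\frac12}(S^m)}\leq C\|\va\|_{H^{\frac12}(\R^m)}$, with $C$ independent of $n$.

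This is where the main work lies, but it is precisely dictated by the conformal weights: the exponent $\frac{m-1}{2}$ in the definition of $\tilde\va_n$ is chosen so that the critical quantity $\int|D|^{\frac12}\cdot|^2$ is scale-invariant, and the test spinors can be taken smooth and compactly supported by density. I would bound the two pieces of $\|\tilde\va_n\|^2=\bigl(|D|^{\frac12}\tilde\va_n,|D|^{\frac12}\tilde\va_n\bigr)_2$ separately: the conformal principal part is handled by Proposition \ref{conformal formula} and the conformal rescaling formula for $D$, giving equality with the corresponding Euclidean quantity on $B_R^0$ (up to a factor $(1+o_n(1))$ since $\ig_n\to\ig_{\R^m}$ in $C^\infty(B_R^0)$); the zero-th order difference between $\|\cdot\|_{H^{\frac12}(S^m)}$ and the homogeneous version reduces to an $L^p$-estimate for some $p<2^*$, which in turn is controlled by $\|\va\|_{H^{\frac12}(\R^m)}$ with a constant depending only on $R$ (and not $n$), using that $R_n\to 0$. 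Combining these gives the uniform continuity of $\va\mapsto\tilde\va_n$, after which
\[
|\langle\bar L_n,\va\rangle|=|\langle L_n,\tilde\va_n\rangle|\leq \|L_n\|_{E^*}\|\tilde\va_n\|_E \leq C\,\|L_n\|_{E^*}\|\va\|_{H^{\frac12}(\R^m)}\longrightarrow 0
\]
uniformly over $\{\va:\supp\va\subset B_R^0,\,\|\va\|_{H^{\frac12}}\leq 1\}$. The main obstacle is the uniform $H^{\frac12}$ estimate for the rescaled test spinors, which requires carefully separating the conformally invariant leading term from the lower-order corrections produced by curvature of $S^m$ and the deviation of $\ig_n$ from the flat metric on $B_R^0$; once this is settled, the rest of the argument is routine.
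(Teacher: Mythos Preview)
Your proposal is correct and follows essentially the same route as the paper: identify $\bar L_n$ as the conformally rescaled image of $L_n=D\psi_n-H|\psi_n|^{p_n-2}\psi_n$, pair with a compactly supported test spinor, push the test spinor forward to $S^m$ with the weight $R_n^{-\frac{m-1}{2}}$, and reduce everything to the uniform bound $\|\tilde\va_n\|_{H^{1/2}(S^m)}\leq C$. The paper simply asserts this uniform bound without justification, whereas you outline a reasonable strategy (conformal invariance of the leading part plus lower-order control on $B_R^0$) for establishing it; otherwise the two arguments coincide.
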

\begin{proof}
According to \eqref{phi-n} and \eqref{c1}, we get
\[
\aligned
D_{\ig_n}\phi_n-R_n^{\frac{m-1}2(2^*-p_n)}
H\circ\mu_n(\cdot)|\phi_n|^{p_n-2}\phi_n
&=R_n^{\frac{m+1}2}\ov{(\mu_n)}_*^{\,-1}\circ\big(
D\psi_n-H(\cdot)|\psi_n|^{p_n-2}\psi_n \big)\circ\mu_n
\\[0.4em]
&=R_n^{\frac{m+1}2}\ov{(\mu_n)}_*^{\,-1}\circ L_n\circ
\mu_n
\endaligned
\]
where $L_n=D\psi_n-H(\cdot)|\psi_n|^{p_n-2}\psi_n
\in E^*$.

Let $\va\in H^{\frac12}(\R^m,\mbs_m)$ be such that
$ \supp\va\subset B_R^0$ and $\|\va\|_{H^{\frac12}}\leq1$.
Then, for all large $n$, we get $d\vol_{\ig_n}
=(1+o_n(1))d\vol_{\ig_{\R^m}}$ and
\begin{eqnarray}\label{ee3}
(1+o_n(1))\inp{\bar L_n}{\va}&=&\real\int_{B_{1/R_n}^0}
(\bar L_n,\va) d\vol_{\ig_n}  \nonumber\\
&=&\real\int_{B_{1/R_n}^0}
\big(\ov{(\mu_n)}_*^{\,-1}\circ L_n\circ \mu_n,
R_n^{\frac{m+1}2}\va \big) d\vol_{\ig_n}  \nonumber\\
&=&\real\int_{B_{1/R_n}^0}
\big(\ov{(\mu_n)}_*^{\,-1}\circ L_n\circ \mu_n,
R_n^{-\frac{m-1}2}\va \big) d\vol_{\mu^*\ig_{S^m}}
\nonumber\\
&=&\real\int_{B_1(a_n)}
\big( L_n,
R_n^{-\frac{m-1}2}\ov{(\mu_n)}_*\circ \va
\circ \mu_n^{-1} \big) d\vol_{\ig_{S^m}}.
\end{eqnarray}

Noting that $ \supp\va\subset B_R^0$ and
$\|\va\|_{H^{\frac12}}\leq1$, we can find a constant $C>0$
independent of $n$ and $\va$ such that
$\big\|R_n^{-\frac{m-1}2}\ov{(\mu_n)}_*\circ \va
\circ \mu_n^{-1}\big\|\leq C$. Thus,
by \eqref{key-assumption} and \eqref{ee3},
we can obtain the desired assertion.
\end{proof}

In what follows, by Lemma \ref{concentration parameter},
we may assume that, after taking a subsequence if necessary,
\[
R_n^{\frac{m-1}2(2^*-p_n)}\to
\lm\in[\bar\lm,1].
\]
Since $\{\phi_n\}$ is bounded in $H_{loc}^{\frac12}
(\R^m,\mbs_m)$, we can assume (up to a subsequence)
$\phi_n\rightharpoonup\phi_0$ in $H_{loc}^{\frac12}
(\R^m,\mbs_m)$. Thanks to the compact embedding
$H_{loc}^{\frac12}(\R^m,\mbs_m)
\hookrightarrow L^q(\R^m,\mbs_m)$ for $1\leq q<2^*$,
it is easy to see that $\phi_0\in L^{2^*}(\R^m,\mbs_m)$
satisfies
\[
D_{\ig_{\R^m}}\phi_0=\lm H(a)|\phi_0|^{2^*-2}\phi_0
\quad \text{on } \R^m.
\]
Furthermore, we have

\begin{Lem}\label{phi-n to phi0}
$\phi_n\to\phi_0$ in $H_{\loc}^{\frac12}(\R^m,\mbs_m)$
as $n\to\infty$.
\end{Lem}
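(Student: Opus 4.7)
The plan is to establish strong $H^{\frac12}_{\loc}$-convergence for the residual $\bar\phi_n := \phi_n - \phi_0$ via an $\vr$-regularity argument that exploits our freedom to shrink $\bar\de$. By Rellich, $\bar\phi_n \to 0$ strongly in $L^q_{\loc}$ for every $q \in [1,2^*)$ and pointwise a.e. Subtracting the limit equation for $\phi_0$ from the approximate equation of Lemma \ref{converge in H-loc}, and invoking (i) $\ig_n \to \ig_{\R^m}$ in $C^\infty_{\loc}$, (ii) $H \circ \mu_n \to H(a)$ uniformly on compacts, (iii) $R_n^{\frac{m-1}{2}(2^*-p_n)} \to \lambda$, and (iv) a Brezis--Lieb splitting of the nonlinearity $|\phi_n|^{p_n-2}\phi_n$ analogous to the treatment of $\Phi_n$ in the proof of Proposition \ref{alternative prop}, one obtains
\[
D_{\ig_n}\bar\phi_n = \lambda H(a)\,|\bar\phi_n|^{2^*-2}\bar\phi_n + r_n, \qquad r_n \to 0 \ \text{in}\ H^{-\frac12}_{\loc}(\R^m, \mbs_m).
\]

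Next, fix $R > 0$ and a cutoff $\eta \in C^\infty_c(B_{2R}^0)$ with $\eta \equiv 1$ on $B_R^0$. Testing the equation for $\bar\phi_n$ against $\eta^2(\bar\phi_n^+ - \bar\phi_n^-)$, where the spectral splitting is taken with respect to the Euclidean Dirac operator on a sufficiently large ambient ball, and absorbing the commutator $[D,\eta] = \nabla\eta\,\cdot$ (whose contribution is of order $\|\nabla\eta\|_\infty \|\bar\phi_n\|_{L^2(\supp \nabla\eta)} = o_n(1)$), one arrives at
\[
\|\eta\bar\phi_n\|_{H^{\frac12}}^2 \leq \lambda H(a) \int_{B_{2R}^0} \eta^2 |\bar\phi_n|^{2^*} + o_n(1) \leq C_0 \biggl(\int_{B_{2R}^0} |\bar\phi_n|^{2^*}\biggr)^{(2^*-2)/2^*} \|\eta\bar\phi_n\|_{H^{\frac12}}^2 + o_n(1),
\]
the last step by H\"older and the Sobolev embedding $H^{\frac12} \hookrightarrow L^{2^*}$. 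Hence $\|\eta\bar\phi_n\|_{H^{\frac12}} \to 0$ will follow once the local $L^{2^*}$-mass of $\bar\phi_n$ is smaller than the absorption threshold $(2 C_0 \lambda H(a))^{-2^*/(2^*-2)}$ for all large $n$.

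This smallness is obtained by the choice of $\bar\de$: the scaling \eqref{c4} together with $\Theta_n(R_n) = \bar\de$ yields, for every $y$ with $B_1(y) \subset B_{2R+1}^0$ and $n$ large,
\[
\int_{B_1(y)} |\phi_n|^{p_n}\, d\vol_{\ig_n} \leq R_n^{-\frac{m-1}{2}(2^*-p_n)} \bar\de \leq \frac{2\bar\de}{\lambda}.
\]
The main obstacle is then the $\vr$-regularity bootstrap upgrading this small local $L^{p_n}$-mass to a small local $L^{2^*}$-mass: since $p_n < 2^*$, no direct H\"older interpolation works, and one must iteratively improve $L^q$-bounds for $\phi_n$ using Sobolev and the subcritical equation, uniformly as $p_n \to 2^*$, in the spirit of a Moser iteration tailored to the Dirac nonlinearity. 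Once this is in hand, $\int_{B_{2R}^0} |\phi_n|^{2^*} \leq C \bar\de^\kappa$ for some $\kappa > 0$, and since $\phi_0 \in L^{2^*}(\R^m)$, the same smallness passes to $\bar\phi_n$. Choosing $\bar\de$ small enough at the outset (still below $\de_0$ from Lemma \ref{blow-up set def}) completes the proof.
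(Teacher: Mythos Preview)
Your overall architecture is close to the paper's, but you have manufactured the very obstacle you then fail to resolve. By rewriting the difference equation with the \emph{critical} nonlinearity $\lambda H(a)|\bar\phi_n|^{2^*-2}\bar\phi_n$, you force yourself to control the local $L^{2^*}$-mass of $\bar\phi_n$, and you then concede that upgrading the available $L^{p_n}$-mass bound to $L^{2^*}$ is ``the main obstacle'' requiring an unspecified Moser-type iteration. This is a genuine gap: you do not carry it out, and it is not clear such an iteration can be made uniform as $p_n\to 2^*$ without already knowing what you are trying to prove.

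The paper sidesteps this completely by \emph{not} passing to the limit in the exponent. It keeps the subcritical nonlinearity $R_n^{\frac{m-1}{2}(2^*-p_n)}(H\circ\mu_n)|z_n|^{p_n-2}z_n$ (after the Brezis--Lieb splitting), applies H\"older with the conjugate pair $(p_n/(p_n-1),p_n)$, and then uses the Sobolev embedding $H^{1/2}\hookrightarrow L^{p_n}$, which is perfectly valid and uniform since $p_n\le 2^*$. The absorption factor is then precisely
\[
C\,R_n^{\frac{m-1}{2}(2^*-p_n)\cdot\frac{p_n-2}{p_n}}\Big(\int_{B_1^0(y)}|z_n|^{p_n}\,d\vol_{\ig_n}\Big)^{\frac{p_n-2}{p_n}}\le C\,\bar\de^{\frac{p_n-2}{p_n}}+o_n(1),
\]
directly from \eqref{choice of an}--\eqref{c4} on a single unit ball $B_1^0(y)$, with $y$ arbitrary. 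Choosing $\bar\de$ small once, independently of any auxiliary radius, closes the argument; no $L^{p_n}\to L^{2^*}$ bootstrap is needed.

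Two further soft spots: your spectral splitting $\bar\phi_n^+-\bar\phi_n^-$ ``with respect to the Euclidean Dirac operator on a sufficiently large ambient ball'' is not well specified (boundary conditions, uniform equivalence with the $H^{1/2}$-norm); the paper instead uses the cleaner a priori estimate $\|\phi\|_{H^{1/2}}\le C\|D_{\ig_{\R^m}}\phi\|_{H^{-1/2}}+C|\phi|_2$ applied to $\beta^2 z_n$, which also makes the metric error $(D_{\ig_{\R^m}}-D_{\ig_n})$ transparent via an interpolation argument. And by working on $B_{2R}^0$ rather than a single unit ball, your smallness constant picks up an $R$-dependence through the covering, which you would then have to untangle; the paper's localisation to $B_1^0(y)$ for arbitrary $y$ avoids this.
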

\begin{proof}
For ease of notation, we shall set $z_n=\phi_n-\phi_0$.
Let us fix $y\in\R^m$ arbitrarily, then it follows from
\eqref{choice of an} and \eqref{c4} that
\begin{\equ}\label{cc1}
R_n^{\frac{m-1}2(2^*-p_n)}
\int_{B_1^0(y)}|\phi_n|^{p_n} d\vol_{\ig_n}
\leq\bar\de \quad
\text{for all } n \text{ large},
\end{\equ}
where $B_R^0(y)=\big\{x\in\R^m:\, |x-y|<R\big\}$
is the Euclidean ball centered at $y$ for any $R>0$.
And we can also conclude from Fatou lemma that
\begin{\equ}\label{ccc1}
\lm\int_{B_1^0(y)}|\phi_0|^{2^*}d\vol_{\ig_{\R^m}}
\leq\bar\de.
\end{\equ}

Taking a smooth function
$\bt:\R^m\to[0,1]$ such that
$\supp\bt\subset B_1^0(y)$. Since, for any
$\phi\in H^{\frac12}(\R^m,\mbs_m)$, we have the estimate
\[
\|\phi\|_{H^{1/2}}\leq C\|D_{\ig_{\R^m}}\phi
\|_{H^{-1/2}}+ C|\phi|_2
\]
for some constant $C>0$ depends only on $m$,
we soon get the estimate for $\bt^2z_n$ as
\begin{eqnarray}\label{bt zn}
\|\bt^2z_n\|_{H^{1/2}}&\leq&
C\big\|D_{\ig_{\R^m}}(\bt^2z_n)
\big\|_{H^{-\frac12}}+ C|\bt^2z_n|_2 \nonumber \\[0.4em]
&\leq& C\big\|D_{\ig_n}(\bt^2\phi_n)
-D_{\ig_{\R^m}}(\bt^2\phi_0)\big\|_{H^{-1/2}}
+C\big\|(D_{\ig_{\R^m}}-D_{\ig_n})(\bt^2\phi_n)
\big\|_{H^{-1/2}}  \nonumber\\[0.4em]
& & + C|\bt^2z_n|_2.
\end{eqnarray}
Noting that $z_n\to0$ in $L_{loc}^2(\R^m,\mbs_m)$,
we immediately have $|\bt^2z_n|_2=o_n(1)$
as $n\to\infty$. To estimate the second term, we employ
an argument of Isobe \cite[Lemma 5.5]{Isobe-JFA}:
we first observe that
\[
\inp{(D_{\ig_{\R^m}}-D_{\ig_n})(\bt^2\phi_n)}{\va}
=\inp{\bt\phi_n}{\bt(D_{\ig_{\R^m}}-D_{\ig_n}^*)\va}
\]
for any $\va\in H^{\frac12}(\R^m,\mbs_m)$, where
$D_{\ig_n}^*$ is the adjoint of $D_{\ig_n}$ with respect
to the metric $\ig_{\R^m}$.

By recalling that
$\ig_n$ converges to $\ig_{\R^m}$ in $C^\infty$-topology
on bounded domains in $\R^m$, we get
$\bt(D_{\ig_{\R^m}}-D_{\ig_n}^*): H^1(\R^m,\mbs_m)
\to L^2(\R^m,\mbs_m)$ satisfies
\begin{\equ}\label{inter1}
\big\|\bt(D_{\ig_{\R^m}}-D_{\ig_n}^*)  \big\|_{H^1\to L^2}
\to0
\end{\equ}
and $(D_{\ig_{\R^m}}-D_{\ig_n})\bt:H^1(\R^m,\mbs_m)
\to L^2(\R^m,\mbs_m)$ satisfies
\begin{\equ}\label{inter2}
\big\| (D_{\ig_{\R^m}}-D_{\ig_n})\bt \big\|_{H^1\to L^2}
\to0
\end{\equ}
as $n\to\infty$. Then, by taking the dual
of \eqref{inter2}, we get
$\bt(D_{\ig_{\R^m}}-D_{\ig_n}^*): L^2(\R^m,\mbs_m)
\to H^{-1}(\R^m,\mbs_m)$ satisfies
\begin{\equ}\label{inter3}
\big\|\bt(D_{\ig_{\R^m}}-D_{\ig_n}^*)
 \big\|_{L^2\to H^{-1}} \to0
\end{\equ}
as $n\to\infty$.

Therefore, interpolating \eqref{inter1} and \eqref{inter3},
we see that
\[
\big\|\bt(D_{\ig_{\R^m}}-D_{\ig_n}^*)
 \big\|_{H^{1/2}\to H^{-1/2}} \to0
\]
and
\[
\big\|(D_{\ig_{\R^m}}-D_{\ig_n})(\bt^2\phi_n)
\big\|_{H^{-1/2}}\leq\|\bt\phi_n\|_{H^{1/2}}
\big\|\bt(D_{\ig_{\R^m}}-D_{\ig_n}^*)
 \big\|_{H^{1/2}\to H^{-1/2}}\to0
\]
as $n\to\infty$.

To complete the proof, it remains to
estimate the first term in \eqref{bt zn}. Recall that,
by Lemma \ref{converge in H-loc}, we have
\[
D_{\ig_n}\phi_n=R_n^{\frac{m-1}2(2^*-p_n)}
H\circ\mu_n(\cdot)|\phi_n|^{p_n-2}\phi_n+\bar L_n
\]
and $\bar L_n\to0$ in $H^{\frac12}_{loc}(\R^m,\mbs_m)$
as $n\to\infty$. Hence, we deduce
\begin{\equ}\label{ee4}
\aligned
& \|D_{\ig_n}(\bt^2\phi_n)
-D_{\ig_{\R^m}}(\bt^2\phi_0)\big\|_{H^{-1/2}} \\[0.4em]
&\quad\leq
\big\| \bt^2R_n^{\frac{m-1}2(2^*-p_n)}H\circ\mu_n
|\phi_n|^{p_n-2}\phi_n-\bt^2\lm H(a)|\phi_0|^{2^*-2}
\phi_0 \big\|_{H^{-1/2}}\\[0.4em]
&\qquad + \big\| \nabla(\bt^2)\cdot_{\ig_n}\phi_n
-\nabla(\bt^2)\cdot_{\ig_{\R^m}}\phi_0\big\|_{H^{-1/2}}
+o_n(1),
\endaligned
\end{\equ}
where $\cdot_{\ig_n}$ and $\cdot_{\ig_{\R^m}}$
are Clifford multiplication with respect to the metrics
$\ig_n$ and $\ig_{\R^m}$, respectively.

Remark that $L^{\frac{2m}{m+1}}(\R^m,\mbs_m)
\hookrightarrow H^{-\frac12}(\R^m,\mbs_m)$, we have
\[
\big\| \nabla(\bt^2)\cdot_{\ig_n}\phi_n
-\nabla(\bt^2)\cdot_{\ig_{\R^m}}\phi_0\big\|_{H^{-1/2}}
\leq\big| \nabla(\bt^2)\cdot_{\ig_n}\phi_n
-\nabla(\bt^2)\cdot_{\ig_{\R^m}}\phi_0
\big|_{\frac{2m}{m+1}} \to 0
\]
as $n\to\infty$.

On the other hand, since we are working on the bounded
domain $B_1^0(y)\subset\R^m$,we can argue as
\eqref{integral1} to obtain
\[
\aligned
&\big\| \bt^2R_n^{\frac{m-1}2(2^*-p_n)}H\circ\mu_n
|\phi_n|^{p_n-2}\phi_n-\bt^2\lm H(a)|\phi_0|^{2^*-2}
\phi_0 \big\|_{H^{-1/2}} \\[0.3em]
& \quad =\big\|
\bt^2R_n^{\frac{m-1}2(2^*-p_n)}H\circ\mu_n
|z_n|^{p_n-2}z_n \big\|_{H^{-1/2}} + o_n(1)
\endaligned
\]
as $n\to\infty$. And thus, by Sobolev embedding and H\"older
inequality, we have
\begin{\equ}\label{cc2}
\aligned
&\big\| \bt^2R_n^{\frac{m-1}2(2^*-p_n)}H\circ\mu_n
|\phi_n|^{p_n-2}\phi_n-\bt^2\lm H(a)|\phi_0|^{2^*-2}
\phi_0 \big\|_{H^{-1/2}} \\[0.4em]
& \quad \leq C R_n^{\frac{m-1}2(2^*-p_n)}
 \Big(
 \int_{B_1^0(y)}|z_n|^{p_n} d\vol_{\ig_n}
 \Big)^{\frac{p_n-2}{p_n}}\|\bt^2z_n\|_{H^{1/2}}+o_n(1)
\endaligned
\end{\equ}
for some constant $C>0$. Moreover, by \eqref{cc1}
and \eqref{ccc1}, we can infer that
\begin{\equ}\label{cc3}
R_n^{\frac{m-1}2(2^*-p_n)\cdot\frac{1}{p_n}}\Big(
\int_{B_1^0(y)}|z_n|^{p_n} d\vol_{\ig_n}
\Big)^{\frac{1}{p_n}} \leq 2\bar\de^{\frac1{p_n}}+o_n(1).
\end{\equ}
Therefore, combining \eqref{bt zn}, \eqref{ee4}, \eqref{cc2}
and \eqref{cc3}, we can get
\[
\aligned
\|\bt^2z_n\|_{H^{1/2}} &\leq C\big\|
\bt^2R_n^{\frac{m-1}2(2^*-p_n)}H\circ\mu_n
|z_n|^{p_n-2}z_n \big\|_{H^{-1/2}} + o_n(1) \\[0.4em]
&\leq CR_n^{\frac{m-1}2(2^*-p_n)\cdot{\frac2{p_n}}}
\bar\de^{\frac{p_n-2}{p_n}}\|\bt^2z_n\|_{H^{1/2}}+o_n(1)
\\[0.4em]
&\leq C
\bar\de^{\frac{p_n-2}{p_n}}\|\bt^2z_n\|_{H^{1/2}}+o_n(1)
\endaligned
\]
as $n\to\infty$, where we have used
$R_n^{\frac{m-1}2(2^*-p_n)}\to\lm\leq1$ in the last
inequality. And if we fix $\bar\de$ small such that
$C\bar\de^{\frac1m}<\frac12$ (recall that
$p_n\to2^*=\frac{2m}{m-1}$), the above estimate implies
that $\bt^2z_n\to0$ in $H^{\frac12}(\R^m,\mbs_m)$.
Since $y\in\R^m$ and $\bt\in C_c^\infty(\R^m)$ with
$\supp\bt\subset B_1^0(y)$ are arbitrary, the conclusion
follows directly.
\end{proof}

By Lemma \ref{phi-n to phi0}, \eqref{choice of an}
and \eqref{c4}, we have
\[
\lm\int_{B_1^0}|\phi_0|^{2^*}d\vol_{\ig_{\R^m}}=\bar\de.
\]
This implies $\phi_0$ is a non-trivial solution of
\begin{\equ}\label{limit equ}
D_{\ig_{\R^m}}\phi_0=\lm H(a)|\phi_0|^{2^*-2}\phi_0
\quad \text{on } \R^m.
\end{\equ}
By the regularity results (see \cite{Ammann, Isobe-JFA}),
we have $\phi_0\in C^{1,\al}(\R^m,\mbs_m)$ for
some $0<\al<1$.
Since $\phi_0\in L^{2^*}(\R^m,\mbs_m)$ and
$\R^m$ is conformal equivalent to $S^m\setminus\{N\}$
(where $N\in S^m$ is the north pole), it is already known that
$\phi_0$ extends to a non-trivial solution $\bar\phi_0$
to the equation
\[
D\bar\phi_0
=\lm H(a)|\bar\phi_0|^{2^*-2}\bar\phi_0
\quad \text{on } S^m
\]
(cf. \cite[Theorem 5.1]{Ammann2009}, see also
\cite{Ammann}). Recall that
$\frac m2$ is the smallest positive eigenvalue of $D$
on $(S^m,\ig_{S^m})$, and it can be characterized
variationally as (see for instance
\cite{Ammann, AGHM, Ginoux})
\[
\frac{m}2 \om_m^{\frac1m}
=\frac m2 \vol(S^m,\ig_{S^m})^{\frac1m}
=\inf_{\psi}
\frac{\Big(
\int_{S^m}|D\psi|^{\frac{2m}{m+1}}d\vol_{\ig_{S^m}}
\Big)^{\frac{m+1}{m}}}{\int_{S^m}(D\psi,\psi)
d\vol_{\ig_{S^m}}}
\]
where the infimum is taken over the set of all smooth
spinor fields for which
\[
\int_{S^m}(D\psi,\psi)d\vol_{\ig_{S^m}}>0.
\]
Then we can conclude from the conformal transformation that
\begin{\equ}\label{blow-up energy}
\int_{S^m}|\bar\phi_0|^{2^*}d\vol_{\ig_{S^m}}
=\int_{\R^m}|\phi_0|^{2^*}dx
\geq\frac{1}{(\lm H(a))^{m}}\big(\frac m2\big)^m \om_m.
\end{\equ}

With these preparations out of the way, we may now choose
$\eta\in C^\infty(S^m)$ be such that $\eta\equiv1$ on
$B_r(a)$ and $\supp \eta\subset B_{2r}(a)$ for some
$r>0$ (for sure $r$ should not be large in the sense
that we shall assume $3r<inj_{S^m}$
where $inj_{S^m}$ denotes
the injective radius) and define a spinor field $z_n\in
C^\infty(S^m,\mbs(S^m))$ by
\[
z_n=R_n^{-\frac{m-1}2}\eta(\cdot)\ov{(\mu_n)}_*
\circ\phi_0\circ \mu_n^{-1}.
\]
Setting $\va_n=\psi_n-z_n$, we have

\begin{Lem}\label{z-n weakly 0}
$\va_n\rightharpoonup 0$ in $E$ as $n\to\infty$.
\end{Lem}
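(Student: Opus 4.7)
The plan is to reduce the claim to showing $z_n\rightharpoonup 0$ in $E$; this is equivalent, since $\psi_n\rightharpoonup 0$ is the standing assumption going into this subsection via Proposition~\ref{alternative prop}. My strategy is to establish two facts: (a) the sequence $\{z_n\}$ is bounded in $E$, and (b) $|z_n|_2\to 0$ as $n\to\infty$. Once these are in hand, the compactness of the embedding $E\hookrightarrow L^2$ together with the reflexivity of $E$ forces every weakly convergent subsequence of $\{z_n\}$ to have weak $E$-limit equal to its $L^2$-limit, namely $0$, so the full sequence converges weakly to zero in $E$ and the lemma follows.

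For (b), I would perform the change of variables $y=\mu_n(x)$ in the integral for $|z_n|_2^2$. The Jacobian contributes a factor $R_n^m(1+o(1))$ which combines with the prefactor $R_n^{-(m-1)}$ coming from $|z_n|^2$ to give
\[
|z_n|_2^2 = R_n\,(1+o(1))\int_{\R^m}(\eta\circ\mu_n)^2|\phi_0|^2\,dx.
\]
Because $\phi_0$ extends to a smooth spinor $\bar\phi_0$ on $S^m$ via stereographic projection (as discussed just before the lemma), one has the decay $|\phi_0(x)|\lesssim (1+|x|^2)^{-(m-1)/2}$, making the right-hand integral $O(1)$ when $m\ge 3$ and at most $O(\log(1/R_n))$ when $m=2$. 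Either way, $|z_n|_2\to 0$.

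For (a), the cleanest route is via the conformal covariance of $D$ (Proposition~\ref{conformal formula}) applied within the Bourguignon--Gauduchon trivialization of Section~\ref{BG sec}. Setting $\hat z_n:=R_n^{(m-1)/2}\,\overline{(\mu_n)}_*^{\,-1}\circ z_n\circ\mu_n=(\eta\circ\mu_n)\phi_0$ on $(B^0_{2r/R_n},\ig_n)$, the covariance identity relates $D_{\ig_{S^m}}z_n$ to $D_{\ig_n}\hat z_n$ via the factor $R_n^{-(m+1)/2}$. Taking the $L^{2m/(m+1)}$-norm is scale-invariant and reduces, modulo commutators with $\eta$ and metric perturbations, to $\int_{\R^m}|D_{\ig_{\R^m}}\phi_0|^{2m/(m+1)}dx$. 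Since $\phi_0$ solves~\eqref{limit equ}, $|D_{\ig_{\R^m}}\phi_0|^{2m/(m+1)}\sim|\phi_0|^{2^*}\in L^1(\R^m)$, so $|Dz_n|_{2m/(m+1)}$ stays bounded. Combining with $L^{2m/(m+1)}(S^m)\hookrightarrow E^*$ and the elliptic estimate $\|z_n\|\le C(\|Dz_n\|_{E^*}+|z_n|_2)$ already used in Lemma~\ref{blow-up set def}, together with (b), delivers the uniform bound in $E$.

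The main obstacle is step (a): the exponential chart $\mu_n$ is only conformal at the base point $a_n$, so one cannot invoke Proposition~\ref{conformal formula} literally and must absorb quadratic metric-perturbation errors of precisely the kind already handled in Lemmas~\ref{converge in H-loc}--\ref{phi-n to phi0}. The commutator contribution $R_n^{-(m-1)/2}\nabla\eta\cdot\overline{(\mu_n)}_*(\phi_0\circ\mu_n^{-1})$ is supported in the annulus $\{r<d(\cdot,a_n)<2r\}$, far from the concentration scale $R_n$, and therefore produces only lower-order $L^2$-terms that vanish by the same $R_n$-scaling computation used in (b).
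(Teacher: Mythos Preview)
Your approach is correct and structurally identical to the paper's: reduce to $z_n\rightharpoonup 0$, establish boundedness of $\{z_n\}$ in $E$, show $|z_n|_2\to 0$, and conclude via the compact embedding $E\hookrightarrow L^2$. The only substantive difference is in how you obtain $|z_n|_2\to 0$: you invoke the pointwise decay $|\phi_0(x)|\lesssim(1+|x|^2)^{-(m-1)/2}$ (legitimate once $\bar\phi_0\in C^{1,\alpha}(S^m)$ is in hand) and integrate directly, whereas the paper splits the integral over $B_{R_nR}(a_n)$ and its complement, handles the inner piece via the obvious $R_n$-scaling, bounds the outer annulus by H\"older against $\big(\int_{\R^m\setminus B_R^0}|\phi_0|^{2^*}\big)^{2/2^*}$ times a bounded geometric factor, and then sends $R\to\infty$. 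The paper's argument uses only $\phi_0\in L^{2^*}$ and avoids appealing to regularity; yours is more direct and avoids the double limit. For step~(a) the paper simply asserts boundedness with a one-line remark, while you give a genuine argument via the scale-invariant $L^{2m/(m+1)}$ bound on $Dz_n$; your caveat about the exponential chart not being exactly conformal is well placed but, as you note, the metric-perturbation errors are of the same type already absorbed in the surrounding lemmas. One minor correction: on the full expanding domain $B^0_{2r/R_n}$ the volume distortion $d\vol_{\ig_n}/d\vol_{\ig_{\R^m}}$ is merely bounded between positive constants (since $R_nx$ stays in a fixed ball of $T_{a_n}S^m$), not $1+o(1)$; this does not affect your conclusion.
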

\begin{proof}
Since we have assumed $\psi_n\rightharpoonup 0$
in $E$ as $n\to\infty$,
we only need to show that $z_n\rightharpoonup0$
in $E$. Remark that, through the conformal transformation
and the local trivialization, it is easy to check that
$\{z_n\}$ is bounded. And hence, by the Sobolev embedding,
this sequence is weakly compact in $E$ and compact
in $L^2$. So, it suffices to prove
\[
\int_{S^m}|z_n|^2d\vol_{\ig_{S^m}}\to0
\]
as $n\to\infty$.

Noting that, for arbitrary $R>0$, we have
\begin{\equ}\label{eee2}
\int_{B_{R_nR}(a_n)}|z_n|^2d\vol_{\ig_{S^m}}
=R_n^{-m+1}\int_{B_{R_nR}^0}|\phi_0|^2d\vol_{\mu_n^*\ig_{S^m}}
=R_n\int_{B_R^0}|\phi_0|^2d\vol_{\ig_n}.
\end{\equ}
And on the other hand, for all large $n$,
\begin{eqnarray*}
\int_{S^m\setminus B_{R_nR}(a_n)}|z_n|^2d\vol_{\ig_{S^m}}
&=&\int_{B_{3r}(a_n)\setminus B_{R_nR}(a_n)}
|z_n|^2d\vol_{\ig_{S^m}} \\
&\leq& C R_n\int_{B_{3r/R_n}^0\setminus B_{R}^0}
|\phi_0|^2 d\vol_{\ig_{\R^m}}  \\
&\leq& C R_n\Big( \int_{B_{3r/R_n}^0\setminus B_{R}^0}
|\phi_0|^{2^*} d\vol_{\ig_{\R^m}} \Big)^{\frac2{2^*}}
\Big(\big(\frac{3r}{R_n}\big)^m-R^m \Big)^\frac{2^*-2}{2^*},
\end{eqnarray*}
where we used $d\vol_{\ig_n}\leq C d\vol_{\ig_{\R^m}}$
on $B_{3r/R_n}^0$ for some constant $C>0$ (since $a_n\to a$
in $S^m$).
Recall that $2^*=\frac{2m}{m-1}$,
it follows from the above inequality that
\begin{\equ}\label{eee3}
\int_{S^m\setminus B_{R_nR}(a_n)}|z_n|^2d\vol_{\ig_{S^m}}
\leq C \Big( \int_{B_{3r/R_n}^0\setminus B_{R}^0}
|\phi_0|^{2^*} d\vol_{\ig_{\R^m}} \Big)^{\frac2{2^*}}
\big( (3r)^m-(R_nR)^m \big).
\end{\equ}

Combining \eqref{eee2} and \eqref{eee3}, we can infer that
\begin{eqnarray*}
\int_{S^m}|z_n|^2d\vol_{\ig_{S^m}}&\leq&
R_n\int_{B_R^0}|\phi_0|^2d\vol_{\ig_n} \\
 & & +C \Big( \int_{B_{3r/R_n}^0\setminus B_{R}^0}
|\phi_0|^{2^*} d\vol_{\ig_{\R^m}} \Big)^{\frac2{2^*}}
\big( (3r)^m-(R_nR)^m \big),
\end{eqnarray*}
which shows $|z_n|_2\to0$ as $n\to\infty$. This completes
the proof.
\end{proof}

Focusing on the description of  the new sequence $\{\va_n\}$,
we have the following result which yields the limiting behavior.

\begin{Lem}\label{va-n converge}
$\cl_{p_n}'(z_n)\to0$ and $\cl_{p_n}'(\va_n)\to0$ as
$n\to\infty$.
\end{Lem}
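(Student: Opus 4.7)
The plan is to work in the rescaled coordinates induced by $\mu_n$ and exploit that $\phi_0$ satisfies the limit equation on $\R^m$.

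For $\cl_{p_n}'(z_n)\to 0$ in $E^*$, I would fix $\va\in E$ with $\|\va\|\le 1$ and introduce the rescaled spinors $\tilde z_n(x):=\eta(\mu_n(x))\phi_0(x)$ and $\tilde\va_n(x):=R_n^{(m-1)/2}\ov{(\mu_n)}_*^{-1}\circ\va\circ\mu_n(x)$. Via the same scaling identities used to derive \eqref{c1}--\eqref{c4}, one checks that
\[
\cl_{p_n}'(z_n)[\va]=\real\int_{B_{3r/R_n}^0}\big(D_{\ig_n}\tilde z_n-R_n^{\frac{m-1}{2}(2^*-p_n)}H\circ\mu_n\,|\tilde z_n|^{p_n-2}\tilde z_n,\,\tilde\va_n\big)d\vol_{\ig_n}.
\]
For large $n$ and any fixed large $R$, $\eta\circ\mu_n\equiv 1$ on $B_R^0$, so $\tilde z_n=\phi_0$ there; combining $H\circ\mu_n\to H(a)$, $R_n^{(m-1)(2^*-p_n)/2}\to\lm$, $\ig_n\to\ig_{\R^m}$ in $C^\infty$, and the limit equation \eqref{limit equ} for $\phi_0$, the integrand on $B_R^0$ converges in $H^{-1/2}_{loc}$-norm (by an argument of Isobe as in Lemma \ref{converge in H-loc}) to zero. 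Pairing with $\tilde\va_n$, whose $H^{1/2}(B_R^0)$-norm is controlled by $\|\va\|$ through conformal invariance of the critical Sobolev embedding, makes this piece $o_n(1)$. The annular tail $B_{3r/R_n}^0\setminus B_R^0$ is handled by H\"older together with $\|\phi_0\|_{L^{2^*}(\R^m\setminus B_R^0)}\to 0$ as $R\to\infty$, which absorbs both the nonlinear integrand and the cutoff-gradient contributions coming from $D_{\ig_n}(\eta\circ\mu_n\cdot\phi_0)$.

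For $\cl_{p_n}'(\va_n)\to 0$, the linearity of $D$ together with $\psi_n=z_n+\va_n$ gives
\[
\cl_{p_n}'(\va_n)[\va]=\cl_{p_n}'(\psi_n)[\va]-\cl_{p_n}'(z_n)[\va]+\real\int_{S^m}H(\xi)\big(G_n,\va\big)d\vol_{\ig_{S^m}},
\]
where $G_n:=|\psi_n|^{p_n-2}\psi_n-|z_n|^{p_n-2}z_n-|\va_n|^{p_n-2}\va_n$. The first term vanishes by \eqref{key-assumption} and the second by the preceding step. For the Brezis--Lieb-type residual I would apply the pointwise bound $|G_n|\le C(|z_n|^{p_n-2}|\va_n|+|z_n||\va_n|^{p_n-2})$ and reduce by H\"older in the dual pair $L^{p_n/(p_n-1)}$--$L^{p_n}$ to showing $G_n\to 0$ in $L^{p_n/(p_n-1)}(S^m)$. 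Outside a small geodesic ball $B_\rho(a_n)$ the bubble $z_n$ tends to $0$ uniformly (since $\phi_0\in L^{2^*}(\R^m)$ localizes after rescaling), while inside, Lemma \ref{phi-n to phi0} yields $\phi_n-\phi_0\to 0$ in $L^{2^*}_{loc}$, and a cutoff-plus-Egorov argument as in Proposition \ref{alternative prop} supplies the required decay.

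The main obstacle is the first step: one must simultaneously absorb the geometric perturbation $\ig_n\to\ig_{\R^m}$, the subcritical-to-critical perturbation $p_n\to 2^*$ in the nonlinearity, and the non-compactness of $\R^m$, while keeping uniform control of the test spinor $\va$ via conformal rescaling. The cutoff $\eta$ is essential here: after rescaling it truncates $\phi_0$ at distance $\sim r/R_n\to\infty$ in $\R^m$, precisely in the region where the $L^{2^*}$-tail of $\phi_0$ is negligible, which is what allows the tail estimate to close the argument.
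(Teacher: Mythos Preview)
Your proposal is correct and follows essentially the same approach as the paper: conformal rescaling to $\R^m$, use of the limit equation \eqref{limit equ} on a large ball $B_R^0$ where $\eta\circ\mu_n\equiv 1$, tail control on the annulus via $\phi_0\in L^{2^*}(\R^m)$, and for $\cl_{p_n}'(\va_n)$ the same Brezis--Lieb residual bound $|G_n|\le C(|z_n|^{p_n-2}|\va_n|+|\va_n|^{p_n-2}|z_n|)$ handled by the inside/outside splitting together with Lemma~\ref{phi-n to phi0}. The only cosmetic difference is that the paper organizes the first step by separating the expression for $Dz_n$ into four terms $l_1,\dots,l_4$ (cutoff gradient, metric perturbation $D_{\ig_n}-D_{\ig_{\R^m}}$, main Dirac term, nonlinear term) and estimating each, whereas you split by region first; also, the paper estimates the residual $\Psi_n$ directly against $|\va|_{2^*}$ rather than passing through $L^{p_n/(p_n-1)}$, and does not invoke Egorov there.
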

\begin{proof}
Let $\va\in E$ be an arbitrary test spinor, it follows that
\begin{\equ}\label{dd1}
\cl_{p_n}(z_n)[\va]=\real\int_{S^m}(Dz_n,\va)d\vol_{\ig_{S^m}}
-\real\int_{S^m}H(\xi)|z_n|^{p_n-2}(z_n,\va)d\vol_{\ig_{S^m}}.
\end{\equ}
On the other hand, since $z_n=
R_n^{-\frac{m-1}2}\eta(\cdot)\ov{(\mu_n)}_*
\circ\phi_0\circ \mu_n^{-1}$, we have
\[
Dz_n=R_n^{-\frac{m-1}2}\nabla\eta\cdot_{\ig_{S^m}}
\ov{(\mu_n)}_*\circ\phi_0\circ\mu_n^{-1}+
R_n^{-\frac{m+1}2}\eta(\cdot)\ov{(\mu_n)}_*\circ
(D_{\ig_n}\phi_0)\circ\mu_n^{-1},
\]
where $\cdot_{\ig_{S^m}}$ is the Clifford multiplication
with respect to the metric $\ig_{S^m}$.
Substituting this into \eqref{dd1}, we have
\begin{\equ}\label{dd}
\cl_{p_n}'(z_n)[\va]=l_1+l_2+l_3-l_4
\end{\equ}
where (through the conformal transformation)
\begin{eqnarray*}
l_1&=&R_n^{-\frac{m-1}2}
\real\int_{S^m}\big(\nabla\eta\cdot_{\ig_{S^m}}
\ov{(\mu_n)}_*\circ\phi_0\circ\mu_n^{-1}, \va \big)
d\vol_{\ig_{S^m}}   \\
 &=&R_n^{\frac{m+1}2}\real\int_{B_{3r/R_n}^0}\big(
(\nabla\eta\circ\mu_n)\cdot_{\ig_n}\phi_0, \
 \ov{(\mu_n)}_*^{\,-1}\circ\va\circ\mu_n \big)
 d\vol_{\ig_n},
\end{eqnarray*}
\begin{eqnarray*}
l_2&=&R_n^{-\frac{m+1}2}\real\int_{S^m}\big(
 \eta(\cdot)\ov{(\mu_n)}_*\circ
(D_{\ig_n}\phi_0-D_{\ig_{\R^m}}\phi_0)
\circ\mu_n^{-1},\va\big)
d\vol_{\ig_{S^m}}   \\
&=&R_n^{\frac{m-1}2}\real\int_{B_{3r/R_n}^0}
(\eta\circ\mu_n)\big(D_{\ig_n}\phi_0-D_{\ig_{\R^m}}\phi_0, \
\ov{(\mu_n)}_*^{\,-1}\circ\va\circ\mu_n \big) d\vol_{\ig_n},
\end{eqnarray*}
\begin{eqnarray*}
l_3&=&R_n^{-\frac{m+1}2}\real\int_{S^m}\big(
 \eta(\cdot)\ov{(\mu_n)}_*\circ
(D_{\ig_{\R^m}}\phi_0)\circ\mu_n^{-1}, \va\big)
d\vol_{\ig_{S^m}}   \\
&=&R_n^{\frac{m-1}2}\real\int_{B_{3r/R_n}^0}
(\eta\circ\mu_n)\big(D_{\ig_{\R^m}}\phi_0, \
\ov{(\mu_n)}_*^{\,-1}\circ\va\circ\mu_n \big) d\vol_{\ig_n},
\end{eqnarray*}
and
\begin{eqnarray*}
l_4&=&R_n^{-\frac{m-1}2(p_n-1)}\real\int_{S^m}
H\cdot\eta^{\frac{m+1}{m-1}}\cdot
\big| \ov{(\mu_n)}_*\circ\phi_0\circ\mu_n^{-1} \big|^{p_n-2}
\big( \ov{(\mu_n)}_*\circ\phi_0\circ\mu_n^{-1}, \va\big)
d\vol_{\ig_{S^m}}  \\
&=&R_n^{\frac{m-1}2(2^*+1-p_n)}\real\int_{B_{3r/R_n}^0}
(H\circ\mu_n) (\eta\circ\mu_n)^{\frac{m+1}{m-1}}
|\phi_0|^{p_n-2}(\phi_0,\
\ov{(\mu_n)}_*^{\,-1}\circ\va\circ\mu_n)d\vol_{\ig_n}.
\end{eqnarray*}

We point out here that $l_1$ can be estimated similarly as
we have done in Lemma \ref{z-n weakly 0}. Indeed, by
H\"older inequality, we observe that
\begin{eqnarray}\label{l1}
|l_1|&\leq&  R_n\int_{B_{3r/R_n}^0}\big|(\nabla\eta\circ\mu_n)
\cdot_{\ig_n} \phi_0\big|\cdot\big| R_n^{\frac{m-1}2}
\ov{(\mu_n)}_*^{\,-1}\circ\va\circ\mu_n \big|
d\vol_{\ig_n}  \nonumber \\
&\leq& C R_n\Big( \int_{B_{3r/R_n}^0\setminus
B_{r/2R_n}^0} d\vol_{\ig_{\R^m}} \Big)^{\frac{2^*-2}{2^*}}
\Big( \int_{B_{3r/R_n}^0\setminus
B_{r/2R_n}^0} |\phi_0|^{2^*}d\vol_{\ig_{\R^m}} \Big)^{\frac1{2^*}}
|\va|_{2^*}  \nonumber\\
&\leq& C r^m\Big( \int_{B_{3r/R_n}^0\setminus
B_{r/2R_n}^0} |\phi_0|^{2^*}d\vol_{\ig_{\R^m}} \Big)^{\frac1{2^*}} \|\va\|,
\end{eqnarray}
where we have used the estimate
\[
\int_{B_{3r/R_n}^0}\big| R_n^{\frac{m-1}2}
\ov{(\mu_n)}_*^{\,-1}\circ\va\circ\mu_n \big|^{2^*}
d\vol_{\ig_n}=\int_{B_{3r}(a_n)}|\va|^{2^*}
d\vol_{\ig_{S^m}}\leq |\va|_{2^*}^{2^*}.
\]
Since $\phi_0\in L^{2^*}(\R^m,\mbs_m)$, we obtain
from \eqref{l1}
\begin{\equ}\label{l11}
|l_1|\leq o_n(1) \|\va\| \quad \text{as } n\to\infty.
\end{\equ}

For $l_2$, by H\"older inequality again, we have
\begin{eqnarray}\label{l2}
|l_2|&\leq&\int_{B_{3r/R_n}^0}\big| D_{\ig_n}\phi_0
-D_{\ig_{\R^m}}\phi_0 \big| \cdot
\big| R_n^{\frac{m-1}2}
\ov{(\mu_n)}_*^{\,-1}\circ\va\circ\mu_n \big|
d\vol_{\ig_n}  \nonumber \\
&\leq& C\Big( \int_{B_{3r/R_n}^0}\big| D_{\ig_n}\phi_0
-D_{\ig_{\R^m}}\phi_0 \big|^{\frac{2m}{m+1}}
d\vol_{\ig_{\R^m}} \Big)^{\frac{m+1}{2m}} \|\va\|.
\end{eqnarray}
Fix $R>0$ arbitrarily, we deduce that
\[
\aligned
&\int_{B_{3r/R_n}^0}\big| D_{\ig_n}\phi_0
-D_{\ig_{\R^m}}\phi_0 \big|^{\frac{2m}{m+1}}
d\vol_{\ig_{\R^m}} \\
&\qquad=\int_{B_{3r/R_n}^0\setminus B_R^0}
\big| D_{\ig_n}\phi_0
-D_{\ig_{\R^m}}\phi_0 \big|^{\frac{2m}{m+1}}
d\vol_{\ig_{\R^m}}
 +\int_{B_R^0}
\big| D_{\ig_n}\phi_0
-D_{\ig_{\R^m}}\phi_0 \big|^{\frac{2m}{m+1}}
d\vol_{\ig_{\R^m}}
\endaligned
\]
and, since $\nabla\phi_0\in L^{\frac{2m}{m+1}}
(\R^m,\mbs_m)$ and $\ig_n\to\ig_{\R^m}$
in $C^\infty(B_R^0)$ as $n\to\infty$, we can
get further from \eqref{l2} that
\begin{\equ}\label{l21}
|l_2|\leq o_n(1)\|\va\| \quad \text{as } n\to\infty.
\end{\equ}

Now, it remains to estimate $|l_3-l_4|$. Noting that
$\phi_0$ satisfies Eq. \eqref{limit equ}, we soon have
\begin{\equ}\label{l3}
l_3=\lm H(a)\,\real\int_{B_{3r/R_n}^0}(\eta\circ\mu_n)
|\phi_0|^{2^*-2}\big( \phi_0,\  R_n^{\frac{m-1}2}
\ov{(\mu_n)}_*^{\,-1}\circ\va\circ\mu_n\big)
d\vol_{\ig_n}.
\end{\equ}
Since the "blow-up points" $a_n\to a$ in $S^m$ and
$\eta\equiv1$ on $B_r(a)$, we have
$\eta\circ\mu_n\equiv1$ on $B_R^0$ for all large $n$
where $R>0$ is fixed. Therefore, by
$\displaystyle\lim_{n\to\infty}
R_n^{\frac{m-1}2(2^*-p_n)}=\lm$ and
$H\circ\mu_n\to H(a)$ uniformly on $B_R^0$ as
$n\to\infty$, we have
\begin{\equ}\label{l31}
\aligned
&R_n^{\frac{m-1}2(2^*-p_n)}\real\int_{B_R^0}
(H\circ\mu_n)|\phi_0|^{p_n-2}\big( \phi_0,\
R_n^{\frac{m-1}2}
\ov{(\mu_n)}_*^{\,-1}\circ\va\circ\mu_n \big)
d\vol_{\ig_n} \\
&\qquad =\lm H(a)
\real\int_{B_R^0}
|\phi_0|^{2^*-2}\big( \phi_0,\
R_n^{\frac{m-1}2}
\ov{(\mu_n)}_*^{\,-1}\circ\va\circ\mu_n \big)
d\vol_{\ig_n} +o_n(1)\|\va\|.
\endaligned
\end{\equ}
On the other hand, since $\phi_0\in L^{2^*}(\R^m,\mbs)$,
it follows that
\[
\aligned
&\int_{B_{3r/R_n}^0\setminus B_R^0}(\eta\circ\mu_n)
|\phi_0|^{2^*-1}\cdot \big| R_n^{\frac{m-1}2}
\ov{(\mu_n)}_*^{\,-1}\circ\va\circ\mu_n\big|
d\vol_{\ig_n} \\
&\qquad
\leq C \Big( \int_{B_{3r/R_n}^0\setminus B_R^0}
|\phi_0|^{2^*} d\vol_{\ig_{\R^m}} \Big)^{\frac{2^*-1}
{2^*}}\Big(\int_{B_{3r}(a_n)}|\va|^{2^*}d\vol_{\ig_{S^m}}
\Big)^{\frac1{2^*}}
\endaligned
\]
and similarly
\[
\aligned
&R_n^{\frac{m-1}2(2^*-p_n)}
\int_{B_{3r/R_n}^0\setminus B_R^0}(H\circ\mu_n)
(\eta\circ\mu_n)^{\frac{m+1}{m-1}}|\phi|^{p_n-1}
\cdot \big| R_n^{\frac{m-1}2}
\ov{(\mu_n)}_*^{\,-1}\circ\va\circ\mu_n\big|
d\vol_{\ig_n} \\
&\qquad\leq C R_n^{\frac{m-1}2(2^*-p_n)}
\Big( \int_{B_{3r/R_n}^0\setminus B_R^0}
d\vol_{\ig_{\R^m}} \Big)^{\frac{2^*-p_n}{2^*}}
\Big(\int_{B_{3r/R_n}^0\setminus B_R^0}
|\phi_0|^{2^*} d\vol_{\ig_{\R^m}}
\Big)^{\frac{p_n-1}{2^*}} \cdot|\va|_{2^*} \\[0.4em]
&\qquad\leq C \big( (3r)^m-(R_nR)^m
\big)^{\frac{2^*-p_n}{2^*}}
\Big(\int_{B_{3r/R_n}^0\setminus B_R^0}
|\phi_0|^{2^*} d\vol_{\ig_{\R^m}}
\Big)^{\frac{p_n-1}{2^*}} \cdot\|\va\|.
\endaligned
\]
Thus, combining \eqref{l3}, \eqref{l31} and the above
two estimates, we can conclude
\begin{\equ}\label{l3-l4}
|l_3-l_4|\leq o_n(1)\|\va\| \quad \text{as } n\to\infty.
\end{\equ}
And then, it follows from \eqref{l11}, \eqref{l21} and
\eqref{l3-l4} that $\cl_{p_n}'(z_n)\to0$ as $n\to\infty$.

\medskip

Now we turn to prove $\cl_{p_n}'(\va_n)\to0$
as $n\to\infty$.

Again, we choose $\va\in E$ be an arbitrary test spinor.
We then have
\begin{eqnarray}
\cl_{p_n}'(\va_n)[\va]&=&\real\int_{S^m}(D\va_n,\va)
d\vol_{\ig_{S^m}}-\real\int_{S^m}H(\xi)|\va_n|^{p_n-2}
(\va_n,\va)d\vol_{\ig_{S^m}}  \nonumber\\
&=&\cl_{p_n}'(\psi_n)[\va]-\cl_{p_n}'(z_n)[\va]
+\real\int_{S^m}(\Psi_n,\va)d\vol_{\ig_{S^m}},
\end{eqnarray}
where
\[
\Psi_n=H(\xi)|\psi_n|^{p_n-2}\psi_n-H(\xi)|z_n|^{p_n-2}
z_n-H(\xi)|\va_n|^{p_n-2}\va_n.
\]

Since we have assumed $\{\psi_n\}$ satisfies
\eqref{key-assumption}, it follows that we only
need to show that $\|\Psi_n\|_{E^*}\to0$ as
$n\to\infty$. Similarly as was argued in \eqref{integral1},
we will use the fact that there exists $C>0$
(independent of $n$) such that
\[
|\Psi_n|\leq C|z_n|^{p_n-2}|\va_n|
+C|\va_n|^{p_n-2}|z_n|.
\]
For any $R>0$, we first observe that for all $n$ large
\[
\aligned
&\int_{S^m\setminus B_{R_nR}(a_n)}|z_n|^{p_n-2}
\cdot|\va_n|\cdot|\va| d\vol_{\ig_{S^m}} \\
&\qquad \leq \om_m^{\frac{2^*-p_n}{2^*}}\Big(
\int_{S^m\setminus B_{R_nR}(a_n)}|z_n|^{2^*}
d\vol_{\ig_{S^m}}\Big)^{\frac{p_n-2}{2^*}} \Big(
\int_{S^m\setminus B_{R_nR}(a_n)}|\va_n|^{2^*}
d\vol_{\ig_{S^m}}\Big)^{\frac1{2^*}} |\va|_{2^*} \\
&\qquad \leq C \Big(
\int_{B_{3r/R_n}^0\setminus B_R^0}|\phi_0|^{2^*}
d\vol_{\ig_n} \Big)^{\frac{p_n-2}{2^*}}\|\va_n\|
\cdot \|\va\| = o_R(1)\|\va\|,
\endaligned
\]
and
\[
\aligned
&\int_{S^m\setminus B_{R_nR}(a_n)}|\va_n|^{p_n-2}
\cdot|z_n|\cdot|\va| d\vol_{\ig_{S^m}} \\
&\qquad \leq \om_m^{\frac{2^*-p_n}{2^*}}\Big(
\int_{S^m\setminus B_{R_nR}(a_n)}|\va_n|^{2^*}
d\vol_{\ig_{S^m}}\Big)^{\frac{p_n-2}{2^*}} \Big(
\int_{S^m\setminus B_{R_nR}(a_n)}|z_n|^{2^*}
d\vol_{\ig_{S^m}}\Big)^{\frac1{2^*}} |\va|_{2^*} \\
&\qquad \leq C \Big(
\int_{B_{3r/R_n}^0\setminus B_R^0}|\phi_0|^{2^*}
d\vol_{\ig_n} \Big)^{\frac1{2^*}}\|\va_n\|^{p_n-2}
\cdot \|\va\| = o_R(1)\|\va\|,
\endaligned
\]
where $\om_m$ stands for the volume of
$(S^m,\ig_{S^m})$ and $o_R(1)\to0$ as $R\to\infty$.

And on the other hand, inside $B_{R_nR}(a_n)$, we have
\[
\aligned
&\int_{B_{R_nR}(a_n)}|z_n|^{p_n-2}
\cdot|\va_n|\cdot|\va| d\vol_{\ig_{S^m}} \\
&\qquad \leq \om_m^{\frac{2^*-p_n}{2^*}}\Big(
\int_{B_{R_nR}(a_n)}|z_n|^{2^*}
d\vol_{\ig_{S^m}}\Big)^{\frac{p_n-2}{2^*}} \Big(
\int_{B_{R_nR}(a_n)}|\va_n|^{2^*}
d\vol_{\ig_{S^m}}\Big)^{\frac1{2^*}} |\va|_{2^*} \\
&\qquad \leq C \Big(
\int_{\R^m}|\phi_0|^{2^*} d\vol_{\ig_{\R^m}}
\Big)^{\frac{p_n-2}{2^*}} \Big(
\int_{B_R^0}|\phi_n-\phi_0|^{2^*}
d\vol_{\ig_n} \Big)^{\frac1{2^*}}
\cdot \|\va\| = o_n(1)\|\va\|
\endaligned
\]
and
\[
\aligned
&\int_{B_{R_nR}(a_n)}|\va_n|^{p_n-2}
\cdot|z_n|\cdot|\va| d\vol_{\ig_{S^m}} \\
&\qquad \leq \om_m^{\frac{2^*-p_n}{2^*}}\Big(
\int_{B_{R_nR}(a_n)}|\va_n|^{2^*}
d\vol_{\ig_{S^m}}\Big)^{\frac{p_n-2}{2^*}} \Big(
\int_{B_{R_nR}(a_n)}|z_n|^{2^*}
d\vol_{\ig_{S^m}}\Big)^{\frac1{2^*}} |\va|_{2^*} \\
&\qquad \leq C \Big(
\int_{B_R^0}|\phi_n-\phi_0|^{2^*} d\vol_{\ig_{\R^m}}
\Big)^{\frac{p_n-2}{2^*}} \Big(
\int_{\R^m}|\phi_0|^{2^*}
d\vol_{\ig_n} \Big)^{\frac1{2^*}}
\cdot \|\va\| = o_n(1)\|\va\|
\endaligned
\]
as $n\to\infty$, where we have used the fact
$\phi_n\to\phi_0$ in $H_{loc}^{\frac12}(\R^m,\mbs_m)$
(see Lemma \ref{phi-n to phi0}).
Therefore, we can conclude that $\Psi_n\to0$ in $E^*$
as $n\to\infty$ which completes the proof.
\end{proof}

At this point we have the following result which summarizes
the blow-up phenomenon.

\begin{Prop}\label{blow-up prop}
Let $\{\psi_n\}\subset E$ fulfill the assumption of
Proposition \ref{alternative prop}. If $\{\psi_n\}$
does not contain any compact subsequence.
Then, up to a subsequence
if necessary, there exist a convergent sequence
$\{a_n\}\subset S^m$, $a_n\to a$ as $n\to\infty$, a
sequence of radius $\{R_n\}$ converging to
$0$, a real number $\lm\in\big(2^{-\frac1{m-1}},1\big]$
and a non-trivial solution $\phi_0$ of
Eq. \eqref{limit equ} such that
\[
R_n^{\frac{m-1}2(2^*-p_n)}=\lm+o_n(1)
\]
and
\[
\psi_n=R_n^{-\frac{m-1}2}\eta(\cdot)\ov{(\mu_n)}_*
\circ\phi_0\circ \mu_n^{-1}+o_n(1) \quad \text{in } E
\]
as $n\to\infty$,
where $\mu_n(x)=\exp_{a_n}(R_nx)$
and $\eta\in C^\infty(S^m)$ is a cut-off function
such that $\eta(\xi)=1$ on $B_r(a)$ and
$\supp\eta\subset B_{2r}(a)$, some $r>0$. Moreover, we
have
\[
\cl_{p_n}(\psi_n)\geq\frac{1}{2m(\lm H(a))^{m-1}}
\big(\frac m2\big)^m \om_m+o_n(1)
\]
as $n\to\infty$.
\end{Prop}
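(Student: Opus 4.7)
The plan is to treat this proposition as the synthesis of the preceding lemmas, supplemented by one essential new ingredient: a Brezis--Lieb type energy splitting. The construction of $\{a_n\}$, $\{R_n\}$, $\lambda$, $\phi_0$, $z_n$, and $\varphi_n:=\psi_n-z_n$ is already complete through Lemmas \ref{blow-up set def}--\ref{va-n converge}: we have $\phi_n\to\phi_0$ in $H^{1/2}_{\loc}(\R^m,\mbs_m)$ with $\phi_0$ a nontrivial solution of \eqref{limit equ}, $R_n^{\frac{m-1}{2}(2^*-p_n)}\to\lambda\in[\bar\lm,1]$, and $\varphi_n\rightharpoonup 0$ in $E$ with $\cl_{p_n}'(\varphi_n)\to 0$. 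What remains is to upgrade this to the strong expansion $\psi_n=z_n+o_n(1)$ in $E$ and to read off the energy lower bound.

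The first step is to establish the energy splitting
\[
\cl_{p_n}(\psi_n)=\cl_{p_n}(z_n)+\cl_{p_n}(\varphi_n)+o_n(1).
\]
The quadratic part follows from $\varphi_n\rightharpoonup 0$ together with the fact that $Dz_n$ is (up to the cutoff error) a rescaling of $D_{\ig_{\R^m}}\phi_0$, so the cross term vanishes via test-function duality. The $L^{p_n}$ part is a Brezis--Lieb argument with varying exponents, using strong local convergence of $\phi_n$ to $\phi_0$ together with the concentration of $z_n$ around $a_n$ and the decay of $\phi_0$ at infinity; $\supp z_n\subset B_{2r}(a_n)$ keeps the overlap under control. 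In parallel I would compute
\[
\lim_{n\to\infty}\cl_{p_n}(z_n)=\frac12\int_{\R^m}(D_{\ig_{\R^m}}\phi_0,\phi_0)\,d\vol_{\ig_{\R^m}}-\frac{\lambda H(a)}{2^*}\int_{\R^m}|\phi_0|^{2^*}d\vol_{\ig_{\R^m}}=\frac{\lambda H(a)}{2m}\int_{\R^m}|\phi_0|^{2^*}d\vol_{\ig_{\R^m}},
\]
using the conformal/scaling change of variables (\eqref{c2}, \eqref{c4}), the convergence $\ig_n\to\ig_{\R^m}$, and that $\phi_0$ solves \eqref{limit equ}. By \eqref{blow-up energy} this is bounded below by $\frac{1}{2m(\lambda H(a))^{m-1}}\bigl(\frac{m}{2}\bigr)^m\om_m$.

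The second step is to rule out a secondary bubble. Suppose for contradiction that $\varphi_n\not\to 0$ in $E$ along any subsequence; by the remark in Lemma \ref{boundedness} this forces $\cl_{p_n}(\varphi_n)\geq c_1>0$. Since also $\cl_{p_n}(\varphi_n)\leq c_2$ (from the upper bound on $\cl_{p_n}(\psi_n)$ and the uniform bound on $\cl_{p_n}(z_n)$), Corollary \ref{key2} applies to $\{\varphi_n\}$, and combined with Lemma \ref{msf=I} and the left-continuity $\tau_{p_n}\to\tau_{2^*}$ from Proposition \ref{key3} yields
\[
\cl_{p_n}(\varphi_n)\geq\frac{p_n-2}{2p_n}\bigl(\tau_{p_n}\bigr)^{\frac{p_n}{p_n-2}}+o_n(1)=\frac{1}{2m}(\tau_{2^*})^m+o_n(1).
\]
Since $\lambda\leq 1$ and $H(a)\leq H_{\max}$, the bound for $\cl_{p_n}(z_n)$ also exceeds $\frac{1}{2m}(\tau_{2^*})^m$, so the splitting gives $\cl_{p_n}(\psi_n)\geq\frac{1}{m}(\tau_{2^*})^m+o_n(1)$, contradicting the hypothesis $\cl_{p_n}(\psi_n)\leq\frac{1}{m}(\tau_{2^*})^m-\theta$.

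Therefore $\varphi_n\to 0$ in $E$, which is the claimed expansion of $\psi_n$. The energy lower bound $\cl_{p_n}(\psi_n)\geq\frac{1}{2m(\lambda H(a))^{m-1}}\bigl(\frac{m}{2}\bigr)^m\om_m+o_n(1)$ follows by substituting $\cl_{p_n}(\varphi_n)=o_n(1)$ into the splitting. The upper bound $\lambda\leq 1$ is Lemma \ref{concentration parameter}; the strict lower bound $\lambda>2^{-1/(m-1)}$ is obtained by comparing this just-proved lower bound with the strict hypothesis $\cl_{p_n}(\psi_n)<\frac{1}{m}(\tau_{2^*})^m$, which forces $(\lambda H(a))^{m-1}>\frac{1}{2}H_{\max}^{m-1}$ and hence $\lambda>2^{-1/(m-1)}$ since $H(a)\leq H_{\max}$. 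The main obstacle I anticipate is the Brezis--Lieb type splitting with the varying exponents $p_n\to 2^*$: one must show simultaneously that the nonlinear cross terms vanish in $L^1$ and that the quadratic cross terms vanish in $E^*$-duality, uniformly as $p_n$ approaches criticality and the bubble concentrates.
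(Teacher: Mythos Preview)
Your overall strategy---energy splitting, then a contradiction via $\tau_{p_n}\to\tau_{2^*}$ to exclude a second bubble, then reading off the lower bound for $\lambda$---matches the paper's proof exactly. The one genuine methodological difference is in how the splitting
\[
\cl_{p_n}(\psi_n)=\cl_{p_n}(z_n)+\cl_{p_n}(\varphi_n)+o_n(1)
\]
is obtained. You propose to treat the quadratic and $L^{p_n}$ parts separately, and you flag the Brezis--Lieb splitting of $\int H|\psi_n|^{p_n}$ (with varying exponents) as the main obstacle. The paper sidesteps this entirely: since $\psi_n$, $z_n$, $\varphi_n$ are all approximate critical points (the last two by Lemma~\ref{va-n converge}), one has $\cl_{p_n}(w)=\frac{p_n-2}{2p_n}\int_{S^m}(Dw,w)\,d\vol_{\ig_{S^m}}+o_n(1)$ for each of the three sequences, so the full energy splitting is \emph{equivalent} to the single statement $\int_{S^m}(Dz_n,\varphi_n)\,d\vol_{\ig_{S^m}}=o_n(1)$. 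No nonlinear Brezis--Lieb is needed.

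One small imprecision: your justification of the quadratic cross term (``$\varphi_n\rightharpoonup 0$ \ldots\ via test-function duality'') is not the right mechanism, since $Dz_n$ concentrates and is not a fixed element of $E^*$. The paper instead pulls the cross term back to $\R^m$ via $\mu_n$, where it becomes $\int(D_{\ig_n}\phi_0,\phi_n-\phi_0)\,d\vol_{\ig_n}$, and then uses the \emph{strong} local convergence $\phi_n\to\phi_0$ on $B_R^0$ (Lemma~\ref{phi-n to phi0}) together with $\nabla\phi_0\in L^{2m/(m+1)}$ to control the tail. This is closer to what you hinted at with ``rescaling of $D_{\ig_{\R^m}}\phi_0$'' but worth making explicit.
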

\begin{proof}
Inherit from the previous lemmas, let us first set
$z_n=R_n^{-\frac{m-1}2}\eta(\cdot)\ov{(\mu_n)}_*
\circ\phi_0\circ \mu_n^{-1}$ and $\va_n=\psi_n-z_n$.
By Lemma \ref{va-n converge}, we have
$\cl_{p_n}'(z_n)\to0$ and $\cl_{p_n}'(\va_n)\to0$
as $n\to\infty$. Hence,
\[
\cl_{p_n}(z_n)+o_n(1)=
\cl_{p_n}(z_n)-\frac12\cl_{p_n}'(z_n)[z_n]=
\frac{p_n-2}{2p_n}\int_{S^m}H(\xi)|z_n|^{p_n}
d\vol_{\ig_{S^m}}\geq0
\]
and
\[
\cl_{p_n}(\va_n)+o_n(1)=
\cl_{p_n}(\va_n)-\frac12\cl_{p_n}'(\va_n)[\va_n]=
\frac{p_n-2}{2p_n}\int_{S^m}H(\xi)|\va_n|^{p_n}
d\vol_{\ig_{S^m}}\geq0
\]
We claim that
\begin{claim}\label{claim1}
$\cl_{p_n}(\psi_n)=\cl_{p_n}(z_n)+\cl_{p_n}(\va_n)
+o_n(1)$ as $n\to\infty$.
\end{claim}
\noindent
Assuming Claim \ref{claim1} for the moment, then we shall
get $\cl_{p_n}(\va_n)\to0$ as $n\to\infty$. Indeed,
suppose to the contrary that (up to a subsequence)
$\cl_{p_n}(\va_n)\geq c>0$, it follows from the
boundedness of $\{\va_n\}$ in $E$,
Corollary \ref{key2} and Lemma \ref{msf=I} that
\[
\tau_{p_n}\leq\msf_{p_n}(\va_n^+)=\max_{t>0}\Big(
\frac{2p_n}{p_n-2}I_{p_n}(t\va_n^+)
\Big)^{\frac{p_n-2}{p_n}}\leq\Big(\frac{2p_n}{p_n-2}
\cl_{p_n}(\va_n)+o_n(1) \Big)^{\frac{p_n-2}{p_n}}.
\]
Hence, by the left continuity of $p\mapsto \tau_p$
(see Proposition \ref{key3}), we get
\begin{\equ}\label{blow-up-cl1}
\cl_{p_n}(\va_n)\geq\frac{p_n-2}{2p_n}
(\tau_{p_n})^{\frac{p_n}{p_n-2}}
+o_n(1)=\frac1{2m}(\tau_{2^*})^m+o_n(1).
\end{\equ}
On the other hand,  we have
\begin{eqnarray*}
\cl_{p_n}(z_n)&=&\frac{p_n-2}{2p_n}
\int_{S^m}H(\xi)|z_n|^{p_n}d\vol_{\ig_{S^m}}+o_n(1)
\\[0.4em]
&=&\frac{p_n-2}{2p_n} R_n^{\frac{m-1}2(2^*-p_n)}
\int_{B_R^0}(H\circ\mu_n)|\phi_0|^{p_n} d\vol_{\ig_n}
+o_n(1)+o_R(1)\\[0.4em]
&=&\frac1{2m} \lm H(a)\int_{B_R^0}|\phi_0|^{2^*}
d\vol_{\ig_{\R^m}} + o_n(1) + o_R(1)
\end{eqnarray*}
for $R>0$ large. Thus, by \eqref{tau value},
\eqref{blow-up energy},
$\lm\in(0,1]$ and $H(a)\leq H_{max}$, we obtain
\begin{\equ}\label{blow-up-cl2}
\cl_{p_n}(z_n)\geq \frac1{2m(\lm H(a))^{m-1}}
\big(\frac m2\big)^m\om_m
+o_n(1)\geq\frac1{2m}(\tau_{2^*})^m+o_n(1).
\end{\equ}
Combining Claim \ref{claim1}, \eqref{blow-up-cl1} and
\eqref{blow-up-cl2}, we have $\cl_{p_n}(\psi_n)
\geq\frac1m(\tau_{2^*})^m+o_n(1)$ as $n\to\infty$
which contradicts to \eqref{key-assumption}.
Therefore, we have $\cl_{p_n}(\va_n)\to0$ as $n\to\infty$
and this, together with $\cl_{p_n}'(\va_n)\to0$, implies
$\va_n\to0$ in $E$ as $n\to\infty$. Moreover, we can
get a lower bound for $\lm$ since $H(a)\leq H_{max}$
and $\cl_{p_n}(\psi_n)<\frac1m(\tau_{2^*})^m$, i.e.
$\lm>2^{-\frac1{m-1}}$.

\medskip

Now it remains to prove Claim \ref{claim1}. We would like
to point out here that (thanks to Lemma \ref{va-n converge})
 this is equivalent to show
\begin{\equ}\label{claim1-1}
\int_{S^m}(D\psi_n,\psi_n)d\vol_{\ig_{S^m}}=
\int_{S^m}(Dz_n,z_n)d\vol_{\ig_{S^m}}+
\int_{S^m}(D\va_n,\va_n)d\vol_{\ig_{S^m}}+o_n(1).
\end{\equ}
And since $\va_n=\psi_n-z_n$, it suffices to prove
$\int_{S^m}(Dz_n,\va_n)d\vol_{\ig_{S^m}}=o_n(1)$
as $n\to\infty$. In fact, for arbitrary $R>0$, we have
\[
\aligned
\int_{S^m}(Dz_n,\va_n)d\vol_{\ig_{S^m}}
&=\int_{B_{R_nR}(a_n)}(Dz_n,\va_n)d\vol_{\ig_{S^m}}
+\int_{B_{3r}(a_n)\setminus B_{R_nR}(a_n)}
(Dz_n,\va_n)d\vol_{\ig_{S^m}}  \\[0.4em]
&=\int_{B_R^0}(D_{\ig_n}\phi_0,\phi_n-\phi_0)
d\vol_{\ig_n}+\int_{B_{3r/R_n}^0\setminus B_R^0}
(D_{\ig_n}\phi_0,\phi_n-\phi_0) d\vol_{\ig_n}.
\endaligned
\]
And for the first integral, by Lemma \ref{phi-n to phi0},
we can get
\begin{\equ}\label{blow-up-cl3}
\Big|\int_{B_R^0}(D_{\ig_n}\phi_0,\phi_n-\phi_0)
d\vol_{\ig_n}\Big|\leq C|\nabla\phi_0|_{\frac{2m}
{m+1}} \cdot \|\phi_n-\phi_0\|_{H^{1/2}_{loc}}\to0
\end{\equ}
as $n\to\infty$. Meanwhile to estimate the second integral,
we first observe that (through the conformal transformation)
\[
\sup_{n}\int_{B_{3r/R_n}^0}|\phi_n-\phi_0|^{2^*}
d\vol_{\ig_{\R^m}}\leq C\sup_n\int_{B_{3r}(a_n)}
|\psi_n-z_n|^{2^*}d\vol_{\ig_{S^m}}<+\infty
\]
for some $C>0$. Thus, by $d\vol_{\ig_n}\leq
C d\vol_{\ig_{\R^m}}$, we have
\begin{\equ}\label{blow-up-cl4}
\Big|\int_{B_{3r/R_n}^0\setminus B_R^0}
(D_{\ig_n}\phi_0,\phi_n-\phi_0) d\vol_{\ig_n}\Big|
\leq C\Big( \int_{B_{3r/R_n}^0\setminus B_R^0}
|\nabla \phi_0|^{\frac{2m}{m+1}}d\vol_{\ig_{\R^m}}
\Big)^{\frac{m+1}{2m}}\to0
\end{\equ}
as $R\to\infty$. Therefore, by \eqref{blow-up-cl3}
and \eqref{blow-up-cl4}, we obtain \eqref{claim1-1}
is valid and the proof is hereby completed.
\end{proof}

\subsection{Using the stereographic projection}\label{stereo sec}

According to Proposition \ref{blow-up prop}: any
non-compact sequence $\{\psi_n\}$ which satisfies
\eqref{key-assumption}, blows up around a point
$a\in S^m$. And due to the statement, it is natural to ask further questions:
\begin{itemize}
\item[1.] Where the blow-up point $a$ locates or whether $a$ has any relation with the function $H$ particularly when $H\not\equiv constant$?

\item[2.] Whether or not the value of $\lm$ can be fixed precisely?
\end{itemize}

We will show now that, if blow-up happens,
such $a\in S^m$ must be a critical point of $H$ and $\lm\equiv1$.
Before proving the results, we begin with some elementary
materials on stereographic projection.

First of all, for arbitrary $\xi\in S^m$, we can always embed
$S^m$ into $\R^{m+1}$ in the way that $\xi$ has the
coordinate $\xi=(0,\dots,0,-1)\in\R^{m+1}$, i.e. $\xi$ is the
South pole.
Denoting $\cs_\xi: S^m\setminus\{-\xi\}\to\R^m$
the stereographic projection from the new North pole $-\xi$,
we have $\cs_\xi(\xi)=0$. Moreover,
$S^m\setminus\{-\xi\}$ and $\R^m$ are conformally
equivalent due to the fact
$(\cs_\xi^{-1})^*\ig_{S^m}=f^2\ig_{\R^m}$ with
$f(x)=\frac2{1+|x|^2}$.

Recall the conformal transformation formula mentioned in
Proposition \ref{conformal formula}, there is an
isomorphism of vector bundles
$\iota:\mbs\big(\R^m, (\cs_\xi^{-1})^*\ig_{S^m}\big)\to
\mbs(\R^m,\ig_{\R^m})$ such that
\[
D_{\ig_{\R^m}}\big( \iota(\va) \big) = \iota\big( f^{\frac{m+1}2}
D_{(\cs_\xi^{-1})^*\ig_{S^m}} (f^{-\frac{m-1}2}\va)\big),
\]
where $D_{(\cs_\xi^{-1})^*\ig_{S^m}}$ is the Dirac
operator on $\R^m$ with respect to the metric
$(\cs_\xi^{-1})^*\ig_{S^m}$. Thus when $\psi$ is a
solution to the equation $D\psi=H(\xi)|\psi|^{p-2}\psi$
on $(S^m,\ig_{S^m})$ for some $p\in(2,2^*]$, then
$\phi:=\iota(f^{\frac{m-1}2}\psi\circ\cs_\xi^{-1})$ will
satisfies the transformed equation
\[
D_{\ig_{\R^m}}\phi=f^{\frac{m-1}2(2^*-p)}
(H\circ \cs_{\xi}^{-1})|\phi|^{p-2}\phi \quad
\text{on } (\R^m,\ig_{\R^m}).
\]
Moreover, since $d\vol_{(\cs_\xi^{-1})^*\ig_{S^m}}
=f^{m}d\vol_{\ig_{\R^m}}$, we have
\[
\int_{\R^m}(D_{\ig_{\R^m}}\phi,\phi)d\vol_{\ig_{\R^m}}
=\int_{S^m}(D\psi,\psi) d\vol_{\ig_{S^m}},
\]
\[
\int_{\R^m}f^{\frac{m-1}2(2^*-p)}|\phi|^{p}
d\vol_{\ig_{\R^m}}=\int_{S^m}|\psi|^{p}
d\vol_{\ig_{S^m}},
\]
and
\[
\int_{\R^m}|\phi|^{2^*}d\vol_{\ig_{\R^m}}=
\int_{S^m}|\psi|^{2^*}d\vol_{\ig_{S^m}}.
\]

\medskip

Returning to our case, let us assume $\{\psi_n\}\subset E$
be a sequence of solutions to the equations
\begin{\equ}\label{equs-n}
D\psi_n=H(\xi)|\psi_n|^{p_n-2}\psi_n \quad \text{on }
S^m, \quad n=1,2,\dots
\end{\equ}
and satisfying
\begin{\equ}\label{key-assumption1}
\frac1{2m}(\tau_{2^*})^m\leq\cl_{p_n}(\psi_n)
\leq\frac1m(\tau_{2^*})^m-\theta
\end{\equ}
for all $n$ large and some $\theta>0$. Then, it is
clear that $\cl_{p_n}'(\psi_n)\equiv 0$ for all $n$. And
hence $\{\psi_n\}$ fulfills the assumption of
Proposition \ref{alternative prop}. Moreover,
by the regularity results proved in \cite{Ammann}, these
solutions are in fact $C^{1,\al}$ for some $\al\in(0,1)$
and are classical solutions to \eqref{equs-n}.

\begin{Prop}\label{blow-up prop2}
Suppose $\{\psi_n\}$ satisfies \eqref{equs-n} and
\eqref{key-assumption1} and does not contain any compact
subsequence. Let $a\in S^m$ be the associate
blow-up point found in Proposition \ref{blow-up prop}
(up to a subsequence if necessary). Then $\nabla H(a)=0$.
\end{Prop}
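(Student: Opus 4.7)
The plan is to pass to $\R^m$ via the stereographic projection from the antipodal point $-a$, establish a Kazdan--Warner type identity for Dirac equations there, and extract from it in the blow-up limit the relation $\lambda\,\nabla H(a)\int_{\R^m}|\phi_0|^{2^*}dx=0$, which forces $\nabla H(a)=0$.

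First I would apply the stereographic projection $\cs:S^m\setminus\{-a\}\to\R^m$ chosen so that $\cs(a)=0$. Using the conformal covariance of $D$ in Proposition \ref{conformal formula} together with the bundle identification reviewed at the beginning of Subsection \ref{stereo sec}, each classical solution $\psi_n$ is transformed into a spinor $\tilde\psi_n\in C^{1,\al}(\R^m,\mbs_m)$ satisfying
\[
D_{\ig_{\R^m}}\tilde\psi_n = K_n\,|\tilde\psi_n|^{p_n-2}\tilde\psi_n,\qquad K_n(x):=f(x)^{\eps_n}\tilde H(x),
\]
where $f(x)=2/(1+|x|^2)$, $\tilde H:=H\circ\cs^{-1}$ and $\eps_n:=\frac{m-1}{2}(2^*-p_n)\to 0$. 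Because the conformal weight $f^{(m-1)/2}$ is absorbed into the transformation and $\psi_n$ is bounded in a neighborhood of $-a$, one gets the decay $|\tilde\psi_n(x)|=O(|x|^{-(m-1)})$ at infinity, which is more than enough to integrate by parts on all of $\R^m$ without boundary contributions.

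Next I would prove the Kazdan--Warner identity: if $\phi\in C^1\cap L^p(\R^m,\mbs_m)$ solves $D_{\ig_{\R^m}}\phi=K|\phi|^{p-2}\phi$ with sufficient decay at infinity, then
\[
\int_{\R^m}\partial_jK\cdot|\phi|^p\,dx=0,\qquad j=1,\dots,m.
\]
The derivation uses $[D_{\ig_{\R^m}},\partial_j]=0$ and self-adjointness of $D_{\ig_{\R^m}}$: on the one hand $\real\int(D\phi,\partial_j\phi)\,dx=\frac{1}{p}\int K\,\partial_j|\phi|^p\,dx=-\frac{1}{p}\int\partial_jK|\phi|^p\,dx$ by integration by parts; on the other hand, $\real\int(D\phi,\partial_j\phi)\,dx=\real\int(\phi,\partial_jD\phi)\,dx$ which, after expanding $\partial_jD\phi=\partial_jK\cdot|\phi|^{p-2}\phi+K\partial_j(|\phi|^{p-2}\phi)$ and using $|\phi|^{p-2}\real(\phi,\partial_j\phi)=\frac{1}{p}\partial_j|\phi|^p$, equals $\frac{1}{p}\int\partial_jK|\phi|^p\,dx$. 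Equating the two expressions gives the identity. Applied to $\tilde\psi_n$ together with $\partial_jK_n=\eps_nf^{\eps_n-1}\partial_jf\cdot\tilde H+f^{\eps_n}\partial_j\tilde H$ it yields
\[
\int_{\R^m}\bigl(\eps_nf^{\eps_n-1}\partial_jf\cdot\tilde H+f^{\eps_n}\partial_j\tilde H\bigr)|\tilde\psi_n|^{p_n}\,dx=0.
\]

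Finally I would rescale and pass to the limit. Set $b_n:=\cs(a_n)\to 0$ and $\hat\phi_n(y):=R_n^{(m-1)/2}\tilde\psi_n(b_n+R_ny)$; by the stereographic reading of Lemma \ref{phi-n to phi0} and of Proposition \ref{blow-up prop}, $\hat\phi_n\to\phi_0$ in $H^{1/2}_{\loc}(\R^m,\mbs_m)$ (and in $C^1_{\loc}$ by elliptic bootstrap), with $\int_{\R^m}|\phi_0|^{2^*}dy>0$. The change of variables $x=b_n+R_ny$, using the identity $m-(m-1)p_n/2=\eps_n$, turns the Kazdan--Warner identity into
\[
R_n^{\eps_n}\int_{\R^m}\Bigl[\eps_n(f^{\eps_n-1}\partial_jf)(b_n+R_ny)\tilde H(b_n+R_ny)+f^{\eps_n}(b_n+R_ny)\partial_j\tilde H(b_n+R_ny)\Bigr]|\hat\phi_n|^{p_n}\,dy=0.
\]
The first term is $O(\eps_n)\to 0$ by the prefactor; in the second, $b_n+R_ny\to 0$ uniformly on compact sets, so $f^{\eps_n}\to 1$ and $\partial_j\tilde H(b_n+R_ny)\to\partial_j\tilde H(0)$, and $R_n^{\eps_n}\to\lm\in(2^{-1/(m-1)},1]$ by Proposition \ref{blow-up prop}. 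Dominated convergence on compact sets combined with uniform tail control of $|\hat\phi_n|^{p_n}$ (read off from the decomposition $\psi_n=z_n+\va_n$ with $\va_n\to 0$ in $E$) yields
\[
\lm\,\partial_j\tilde H(0)\int_{\R^m}|\phi_0|^{2^*}\,dy=0,\qquad j=1,\dots,m.
\]
Since $\lm>0$, $\phi_0\not\equiv 0$, and $(d\cs^{-1})_0:T_0\R^m\to T_aS^m$ is an isomorphism, this forces $\nabla H(a)=0$.

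The main obstacle I anticipate is the tail estimate $\limsup_n\int_{\R^m\setminus B_R^0}|\hat\phi_n|^{p_n}dy\to 0$ as $R\to\infty$, needed to pass the limit globally rather than only on compacts. This should follow from the single-bubble decomposition in Proposition \ref{blow-up prop}: the rescaling of $z_n$ contributes $|\phi_0|^{2^*}$-tails on $\R^m\setminus B_R^0$, which are small by $\phi_0\in L^{2^*}(\R^m)$, and the contribution of $\va_n$ is controlled by $\|\va_n\|_E\to 0$ together with the Sobolev embedding $E\hookrightarrow L^{2^*}$.
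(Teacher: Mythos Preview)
Your approach is correct and is the same Pohozaev/Kazdan--Warner identity that the paper uses: both test the transformed equation on $\R^m$ against translational derivatives $\partial_j$ of the solution to extract $\partial_j(H\circ\cs_a^{-1})(0)$ times a positive mass. The differences are purely technical. The paper inserts a cutoff $\bt\circ\cs_a^{-1}$ supported in $B_{3r}(a)$ before differentiating, so all integrations by parts are automatic and the error terms live on the annulus $B_{3r}\setminus B_{2r}$, where $|\psi_n|^{p_n}$ is already known to vanish from Proposition \ref{blow-up prop}; you instead work on all of $\R^m$, which forces you to justify the global identity via pointwise decay $|\tilde\psi_n(x)|=O(|x|^{-(m-1)})$ and to control the tails $\int_{\R^m\setminus B_R^0}|\hat\phi_n|^{p_n}$ separately. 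The paper also never rescales: it keeps $\tilde\phi_n$ and uses directly that the $L^{p_n}$-mass concentrates at $0$, so on $B_R^0$ one may freeze $\partial_k(H\circ\cs_a^{-1})$ at $0$ up to $O(R)$, while outside $B_R^0$ the mass is $o_n(1)$; you rescale to $\hat\phi_n$ and pass to the limit via $\hat\phi_n\to\phi_0$ in $H^{1/2}_{\loc}$. Your route is slightly cleaner conceptually (the limiting identity is visible), the paper's is slightly cleaner technically (no decay discussion, no tail estimate), but they are the same proof.
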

\begin{proof}
Let us consider the stereographic projection
$\cs_a: S^m\setminus\{N\}\to\R^m$ and the
associated bundle isomorphism
$\iota:\mbs\big(\R^m, (\cs_a^{-1})^*\ig_{S^m}\big)\to
\mbs(\R^m,\ig_{\R^m})$. Denoted by
$\tilde\phi_n= \iota(f^{\frac{m-1}2}\psi_n\circ \cs_a^{-1})$,
we have $\tilde\phi_n$ satisfies
\begin{\equ}\label{tilde phi-n}
D_{\ig_{\R^m}}\tilde\phi_n=f^{\frac{m-1}2(2^*-p_n)}
(H\circ \cs_a^{-1})|\tilde\phi_n|^{p_n-2}\tilde\phi_n
\quad \text{on } (\R^m,\ig_{\R^m}).
\end{\equ}

Take $\bt\in C_c^\infty(S^m)$ be a cut-off function on
$S^m$ such that $\bt\equiv1$ on $B_{2r}(a)$ and
$\supp\bt\subset B_{3r}(a)$ where $r>0$ comes from
Proposition \ref{blow-up prop}. Then we are allowed
to multiply \eqref{tilde phi-n} by
$\pa_k\big((\bt\circ \cs_a^{-1})\tilde\phi_n \big)$
as a test spinor for each $k=1,2,\dots,m$, and
consequently we have
\begin{\equ}\label{identity}
\aligned
&\real\int_{\R^m}\big(D_{\ig_{\R^m}}\tilde\phi_n,
\pa_k\big((\bt\circ \cs_a^{-1})\tilde\phi_n \big) \big)
d\vol_{\ig_{\R^m}} \\
&\qquad=\real\int_{\R^m}
f^{\frac{m-1}2(2^*-p_n)}(H\circ\cs_a^{-1})
|\tilde\phi_n|^{p_n-2}\big(\tilde\phi_n,  \,
\pa_k\big((\bt\circ \cs_a^{-1})\tilde\phi_n \big)   \big)
d\vol_{\ig_{\R^m}}.
\endaligned
\end{\equ}

Remark that $(\bt\circ \cs_a^{-1})\tilde\phi_n$ has a
compact support, we may integrate by parts to get
\begin{eqnarray}\label{I1}
0&=&\real\int_{\R^m}\pa_k\big(
D_{\ig_{\R^m}}\tilde\phi_n,
(\bt\circ\cs_a^{-1})\tilde\phi_n \big) d\vol_{\ig_{\R^m}}
\nonumber \\
&=&2\,\real\int_{\R^m}\big(D_{\ig_{\R^m}}\tilde\phi_n,
\,\pa_k\big((\bt\circ \cs_a^{-1})\tilde\phi_n \big) \big)
d\vol_{\ig_{\R^m}}  \nonumber\\
& &\quad +\,\real\int_{\R^m}\big(\pa_k\tilde\phi_n,
\nabla(\bt\circ\cs_a^{-1})\cdot_{\ig_{\R^m}}\tilde\phi_n
\big) d\vol_{\ig_{\R^m}}  \nonumber\\
& &\quad -\,\real\int_{\R^m}\big( D_{\ig_{\R^m}}\tilde\phi_n,  \,
\pa_k(\bt\circ\cs_a^{-1})\tilde\phi_n \big)
d\vol_{\ig_{\R^m}},
\end{eqnarray}
where $\cdot_{\ig_{\R^m}}$ denotes the Clifford
multiplication with respect to $\ig_{\R^m}$.
Now let us evaluate the last two integrals of the
previous equality. First of all, by noting that
$\{\psi_n\}$ is bounded in $E$, we can see
from the conformal transformation and the regularity
results (see \cite{Ammann}) that
$\{\nabla\tilde\phi_n\}$ is uniformly bounded in $L^{\frac{2m}{m+1}}(\R^m,\mbs_m)$.
And so, by Proposition \ref{blow-up prop},
\[
\Big| \int_{\R^m}\big(\pa_k\tilde\phi_n,
\nabla(\bt\circ\cs_a^{-1})\cdot_{\ig_{\R^m}}\tilde\phi_n
\big) d\vol_{\ig_{\R^m}} \Big|
\leq
C\Big( \int_{B_{3r}(a)\setminus B_{2r}(a)}
 |\psi_n|^{2^*}
d\vol_{\ig_{S^m}} \Big)^{\frac1{2^*}}  \to0
\]
as $n\to\infty$. Analogously, we have
\[
\Big| \int_{\R^m}\big( D_{\ig_{\R^m}}\tilde\phi_n,  \,
\pa_k(\bt\circ\cs_a^{-1})\tilde\phi_n \big)
d\vol_{\ig_{\R^m}} \Big|\to0
\]
as $n\to\infty$. And thus, we conclude from \eqref{I1}
that
\begin{\equ}\label{I2}
\real\int_{\R^m}\big(D_{\ig_{\R^m}}\tilde\phi_n,
\,\pa_k\big((\bt\circ \cs_a^{-1})\tilde\phi_n \big) \big)
d\vol_{\ig_{\R^m}}=o_n(1) \quad \text{as } n\to\infty.
\end{\equ}

On the other hand, to evaluate the second integral
of \eqref{identity}, we have
\begin{eqnarray}\label{I3}
0&=&\int_{\R^m}\pa_k\big[ f^{\frac{m-1}2(2^*-p_n)}
(H\circ\cs_a^{-1})(\bt\circ\cs_a^{-1})|\tilde\phi_n|^{p_n}
\big]d\vol_{\ig_{\R^m}}  \nonumber\\
&=&\frac{m-1}2(2^*-p_n)\int_{\R^m}
f^{\frac{m-1}2(2^*-p_n)-1}\pa_kf \cdot
(H\circ\cs_a^{-1})(\bt\circ\cs_a^{-1})|\tilde\phi_n|^{p_n}
d\vol_{\ig_{\R^m}}  \nonumber\\
& &\quad+\int_{\R^m}f^{\frac{m-1}2(2^*-p_n)}
\pa_k(H\circ\cs_a^{-1})(\bt\circ\cs_a^{-1})
|\tilde\phi_n|^{p_n} d\vol_{\ig_{\R^m}}  \nonumber\\
& &\quad +\,p_n\real\int_{\R^m}
f^{\frac{m-1}2(2^*-p_n)}(H\circ\cs_a^{-1})
|\tilde\phi_n|^{p_n-2}\big(\tilde\phi_n,  \,
\pa_k\big((\bt\circ \cs_a^{-1})\tilde\phi_n \big)   \big)
d\vol_{\ig_{\R^m}}   \nonumber\\
& &\quad -\,(p_n-1)\int_{\R^m}
f^{\frac{m-1}2(2^*-p_n)}(H\circ\cs_a^{-1})\pa_k
(\bt\circ\cs_a^{-1})|\tilde\phi_n|^{p_n}
d\vol_{\ig_{\R^m}}.
\end{eqnarray}
It is evident that the last integral converges to $0$ as
$n\to\infty$, and we only need to estimate the remaining
terms. Notice that $f(x)=\frac2{1+|x|^2}$
and $\bt\circ\cs_a^{-1}$ has a compact
support on $\R^m$, hence $f$, $f^{-1}$ and $\nabla f$
are bounded uniformly on $\supp(\bt\circ\cs_a^{-1})$ and
\[
\Big|\frac{m-1}2(2^*-p_n) \int_{\R^m}
f^{\frac{m-1}2(2^*-p_n)-1}\pa_kf \cdot
(H\circ\cs_a^{-1})(\bt\circ\cs_a^{-1})|\tilde\phi_n|^{p_n}
d\vol_{\ig_{\R^m}} \Big| \to0
\]
as $n\to\infty$. For the second integral, take arbitrarily
$R>0$ small, we deduce that
\[
\aligned
&\Big|
\int_{\R^m\setminus B_{R}^0}f^{\frac{m-1}2(2^*-p_n)}
\pa_k(H\circ\cs_a^{-1})(\bt\circ\cs_a^{-1})
|\tilde\phi_n|^{p_n} d\vol_{\ig_{\R^m}}
\Big| \\
&\qquad \leq C \int_{\R^m\setminus B_{R}^0}
f^{\frac{m-1}2(2^*-p_n)}|\tilde\phi_n|^{p_n}
d\vol_{\ig_{\R^m}}
\leq C \int_{S^m\setminus B_R(a)}|\psi_n|^{p_n}
d\vol_{\ig_{S^m}}\to0
\endaligned
\]
as $n\to\infty$. And inside $B_R^0$, we have
\[
\aligned
&\int_{B_R^0}f^{\frac{m-1}2(2^*-p_n)}
\pa_k(H\circ\cs_a^{-1})(\bt\circ\cs_a^{-1})
|\tilde\phi_n|^{p_n} d\vol_{\ig_{\R^m}}  \\
&\qquad = \pa_k(H\circ\cs_a^{-1})(0)\int_{B_R^0}
f^{\frac{m-1}2(2^*-p_n)}
|\tilde\phi_n|^{p_n} d\vol_{\ig_{\R^m}} +O\Big(
\int_{B_R^0}|x|\cdot|\tilde\phi_n|^{p_n} d\vol_{\ig_{\R^m}}\Big) + o_n(1) \\
&\qquad = \pa_k(H\circ\cs_a^{-1})(0)\int_{B_R^0}
f^{\frac{m-1}2(2^*-p_n)}
|\tilde\phi_n|^{p_n} d\vol_{\ig_{\R^m}} +O(R) + o_n(1)
\endaligned
\]
as $n\to\infty$ and $R\to0$. Thus by \eqref{I3},
 for arbitrarily small $R>0$, we get
\begin{\equ}\label{I4}
\aligned
&\real\int_{\R^m}
f^{\frac{m-1}2(2^*-p_n)}(H\circ\cs_a^{-1})
|\tilde\phi_n|^{p_n-2}\big(\tilde\phi_n,  \,
\pa_k\big((\bt\circ \cs_a^{-1})\tilde\phi_n \big)   \big)
d\vol_{\ig_{\R^m}}  \\
&\qquad =
-\frac1{p_n}\pa_k(H\circ\cs_a^{-1})(0)\int_{\R^m}
f^{\frac{m-1}2(2^*-p_n)}
|\tilde\phi_n|^{p_n} d\vol_{\ig_{\R^m}} +O(R) + o_n(1).
\endaligned
\end{\equ}

Combining \eqref{identity}, \eqref{I2} and \eqref{I4},
we conclude that
\begin{\equ}\label{identity2}
\pa_k(H\circ\cs_a^{-1})(0)\int_{\R^m}
|\tilde\phi_n|^{p_n} d\vol_{\ig_{\R^m}}=O(R) + o_n(1)
\end{\equ}
as $n\to\infty$ and $R$ can be fixed arbitrarily small.
Since we already know from the blow-up analysis that
\[
\lim_{n\to\infty}\int_{\R^m}f^{\frac{m-1}2(2^*-p_n)}
|\tilde\phi_n|^{p_n}d\vol_{\ig_{\R^m}}
=\lim_{n\to\infty}\int_{S^m}
|\psi_n|^{p_n}d\vol_{\ig_{S^m}}>0,
\]
\eqref{identity2} gives us nothing but
$\pa_k(H\circ\cs_a^{-1})(0)\equiv0$. Notice that $k$ can be
varying from $1$ to $m$, we have
$\nabla(H\circ\cs_a^{-1})(0)=0$, i.e. $\nabla H(a)=0$
which completes the proof.
\end{proof}

\begin{Prop}\label{blow-up cor}
Suppose $\{\psi_n\}$ satisfies \eqref{equs-n} and
\eqref{key-assumption1} and does not contain any compact
subsequence.
Let $\{R_n\}$ be the associated radius found in Proposition \ref{blow-up prop}. Then
\[
\displaystyle\lim_{n\to\infty}R_n^{\frac{m-1}2(2^*-p_n)}=1.
\]
\end{Prop}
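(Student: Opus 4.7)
The plan is to derive a Pohozaev-type identity for the rescaled spinor $\phi_n$ in blow-up coordinates, and then exploit $\nabla H(a)=0$ (just established in Proposition \ref{blow-up prop2}) to control the asymptotic rate of $R_n^{\frac{m-1}2(2^*-p_n)}$.

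In the rescaled coordinates $\mu_n(x)=\exp_{a_n}(R_n x)$ of Subsection \ref{blow-up sec}, $\phi_n$ satisfies
\[
D_{\ig_n}\phi_n = R_n^{\frac{m-1}2(2^*-p_n)}\,H(\mu_n(x))\,|\phi_n|^{p_n-2}\phi_n.
\]
First I would test this equation against the dilation test spinor $\sum_{k=1}^m x_k\,\partial_k(\chi_R\phi_n)$, where $\chi_R\in C_c^\infty(\R^m)$ is a smooth cutoff equal to $1$ on $B_R^0$ and supported in $B_{2R}^0$, and integrate by parts in direct analogy with the computation in Proposition \ref{blow-up prop2} (now using the radial vector field $\sum_k x_k\partial_k$ in place of a single $\partial_k$). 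This should yield a Pohozaev-type identity of the form
\[
\frac{(m-1)(p_n-2^*)}{2}\int_{\R^m}H(\mu_n)|\phi_n|^{p_n}\,d\vol_{\ig_n}
= R_n\int_{\R^m}\bigl(x\cdot(\nabla H)(\mu_n(x))\bigr)|\phi_n|^{p_n}\,d\vol_{\ig_n}+o_n(1).
\]
The $o_n(1)$ error absorbs (i) the cutoff boundary contributions, controlled as in Lemma \ref{z-n weakly 0} by the decay of $\phi_0$ at infinity, and (ii) the metric-correction terms $D_{\ig_n}-D_{\ig_{\R^m}}$, which are of order $O(R_n^2)$ on bounded sets via the normal-coordinate expansion $\ig_n=\ig_{\R^m}+O(R_n^2|x|^2)$ and can be handled by the interpolation argument \eqref{inter1}--\eqref{inter3} of Lemma \ref{phi-n to phi0}.

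Next, since $H\in C^1(S^m)$ with $\nabla H(a)=0$, and $\mu_n(x)\to a$ pointwise for each fixed $x\in\R^m$ (because $a_n\to a$ and $R_n\to 0$), continuity of $\nabla H$ gives $(\nabla H)(\mu_n(x))\to 0$ pointwise on $\R^m$. Combined with the local strong convergence $\phi_n\to\phi_0$ in $H^{1/2}_{\loc}(\R^m,\mbs_m)$ from Lemma \ref{phi-n to phi0} and the decay $|\phi_0(x)|\leq C(1+|x|)^{-(m-1)}$ inherited from the conformal extension of $\phi_0$ to a smooth spinor on $S^m$ (which guarantees $\int_{\R^m}|x||\phi_0|^{2^*}\,dx<\infty$ for $m\geq 2$), a standard splitting of $\R^m$ into a large ball and its tail, followed by dominated convergence, yields
\[
\int_{\R^m}\bigl(x\cdot(\nabla H)(\mu_n(x))\bigr)|\phi_n|^{p_n}\,d\vol_{\ig_n}\longrightarrow 0 \quad \text{as } n\to\infty.
\]
Since $\int H(\mu_n)|\phi_n|^{p_n}\,d\vol_{\ig_n}\to H(a)\int_{\R^m}|\phi_0|^{2^*}\,dx>0$ is bounded away from zero, the Pohozaev identity forces $2^*-p_n=o(R_n)$ as $n\to\infty$.

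Finally, because $R_n\to 0^+$ implies $R_n|\ln R_n|\to 0$, the estimate $2^*-p_n=o(R_n)$ gives
\[
\Big|\tfrac{m-1}2(2^*-p_n)\ln R_n\Big| = \tfrac{m-1}2\cdot\tfrac{2^*-p_n}{R_n}\cdot R_n|\ln R_n|\longrightarrow 0,
\]
whence $R_n^{\frac{m-1}2(2^*-p_n)}=\exp\bigl(\tfrac{m-1}{2}(2^*-p_n)\ln R_n\bigr)\to 1$. Combined with the convergence $R_n^{\frac{m-1}2(2^*-p_n)}\to\lm\in(2^{-\frac{1}{m-1}},1]$ from Proposition \ref{blow-up prop}, this forces $\lm=1$. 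The main obstacle is the rigorous derivation of the Pohozaev-type identity in the first step: both the cutoff boundary terms and the curvature/connection corrections coming from $D_{\ig_n}\neq D_{\ig_{\R^m}}$ require careful control, but both are of sub-leading order and can be handled by the same techniques already developed in Lemmas \ref{phi-n to phi0} and \ref{va-n converge}.
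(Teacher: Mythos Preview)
Your approach is correct in spirit and reaches the same conclusion, but it differs from the paper's implementation in two notable ways.

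First, the paper does \emph{not} work in the rescaled geodesic normal coordinates $\mu_n$; instead it passes to the stereographic projection $\cs_a$ and studies the one-parameter family of conformal rescalings $\tilde\phi_{n,R}(x)=R^{\frac{m-1}{2}}\tilde\phi_n(Rx)$. The key advantage is that in stereographic coordinates the equation for $\tilde\phi_n$ involves the \emph{exact} Euclidean Dirac operator $D_{\ig_{\R^m}}$, so there are no metric-correction terms $D_{\ig_n}-D_{\ig_{\R^m}}$ to control. The paper then differentiates the two scaling identities $\int(D\tilde\phi_{n,R},\hat\phi_{n,R})\,dx=\int_{S^m}(D\psi_n,\bt\psi_n)\,d\vol$ and $\int(\bt\circ\cs_a^{-1})(Rx)\widehat H_{n,R}|\tilde\phi_{n,R}|^{p_n}\,dx=R^{\frac{m-1}{2}(p_n-2^*)}\int_{S^m}\bt H|\psi_n|^{p_n}\,d\vol$ with respect to $R$ at $R=R_n$. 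This is precisely the infinitesimal dilation test you propose, but packaged so that the scale-invariance of the Dirac action kills the left side directly and the power law on the right side produces the factor $(p_n-2^*)R_n^{-1}$ explicitly.

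Second, and more importantly, the paper does \emph{not} invoke $\nabla H(a)=0$ (Proposition~\ref{blow-up prop2}) at all. It only shows that the $H$-derivative contribution is $O_n(1)$, not $o_n(1)$, and this already yields $(2^*-p_n)R_n^{-1}=O_n(1)$, i.e.\ $2^*-p_n=O(R_n)$. Since $R_n\ln R_n\to 0$, this weaker estimate is enough to conclude $R_n^{\frac{m-1}{2}(2^*-p_n)}\to 1$. Your detour through $\nabla H(a)=0$ to obtain the sharper bound $2^*-p_n=o(R_n)$ is therefore unnecessary, and it forces you to control the tail integral $\int_{|x|>R}|x|\,|\phi_n|^{p_n}$ uniformly in $n$, which is more delicate than what the argument actually requires. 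Your outline is workable, but the paper's route is both shorter and logically independent of Proposition~\ref{blow-up prop2}.
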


\begin{proof}
Let us recall the equation under stereographic projection \eqref{tilde phi-n} and consider the conformal change of $\tilde\phi_n$ as
\[
\tilde\phi_{n,R}(x)=R^{\frac{m-1}2}\tilde\phi_n(Rx) \quad \text{for } R>0.
\]
Then we have
\begin{\equ}\label{eq-tilde-n-r}
D_{\ig_{\R^m}}\tilde\phi_{n,R}=R^{\frac{m-1}2(2^*-p_n)}\widehat H_{n,R}|\tilde\phi_{n,R}|^{p_n-2}\tilde\phi_{n,R} \quad \text{on } \R^m
\end{\equ}
where, for ease of notations, we have denoted $\widehat H_{n,R}(x)=f^{\frac{m-1}2(2^*-p_n)}(Rx)\cdot
H\circ \cs_a^{-1}(Rx)$.

Let $\bt\in C_c^\infty(S^m)$ be the same cut-off function as in \eqref{identity}, we set
\[
\hat\phi_{n,R}(x)=\bt\circ\cs_a^{-1}(Rx)\cdot \tilde\phi_{n,R}(x).
\]
Then a direct calculation shows that
\begin{\equ}\label{id1}
\int_{\R^m}(D_{\ig_{\R^m}}\tilde\phi_{n,R},\hat\phi_{n,R})d\vol_{\ig_{\R^m}}
%=\int_{\R^m}(D_{\ig_{\R^m}}\tilde\phi_n,(\bt\circ\cs_a^{-1})\tilde\phi_n)d\vol_{\ig_{\R^m}}
=\int_{S^m}(D\psi_n,\bt\psi_n) d\vol_{\ig_{S^m}}
\end{\equ}
and
\begin{\equ}\label{id2}
\int_{\R^m}(\bt\circ\cs_a^{-1})(Rx)\cdot\widehat H_{n,R}|\tilde\phi_{n,R}|^{p_n}d\vol_{\ig_{\R^m}}=R^{\frac{m-1}2(p_n-2^*)}
\int_{S^m}\bt H|\psi_n|^{p_n}d\vol_{\ig_{S^m}}.
\end{\equ}
Hence, take derivative with respect to $R$ in \eqref{id1}, we have
\begin{eqnarray}\label{idd1}
0&=&\frac{d}{dR}\Big|_{R=R_n}\int_{\R^m}(D_{\ig_{\R^m}}\tilde\phi_{n,R},\hat\phi_{n,R})d\vol_{\ig_{\R^m}} \nonumber \\
&=&2\real\int_{\R^m}\Big(D_{\ig_{\R^m}}\tilde\phi_{n,R_n},\frac{d}{dR}\Big|_{R=R_n}\hat\phi_{n,R}\Big)d\vol_{\ig_{\R^m}} \nonumber\\
& & \quad +\real\int_{\R^m}\Big( \frac{d}{dR}\Big|_{R=R_n}\tilde\phi_{n,R}, R_n\nabla(\bt\circ\cs_a^{-1})(R_nx)\cdot_{\ig_{\R^m}}\tilde\phi_{n,R_n}\Big)d\vol_{\ig_{\R^m}} \nonumber\\
& &\quad -\real\int_{\R^m}\Big( D_{\ig_{\R^m}}\tilde\phi_{n,R_n}, \frac{d}{dR}\Big|_{R=R_n}\big[(\bt\circ\cs_a^{-1})(Rx)\big]\tilde\phi_{n,R_n}\Big)d\vol_{\ig_{\R^m}}.
\end{eqnarray}
To evaluate the last two integrals above, we first notice that
\[
\frac{d}{dR}\Big|_{R=R_n}\tilde\phi_{n,R}(x)=\frac{m-1}2R_n^{\frac{m-3}2}\tilde\phi_n(R_nx)+R_n^{\frac{m+1}2}\nabla\tilde\phi_n(R_nx)\cdot x,
\]
and by the property of the hermitian product on $\mbs(S^m)$ (see the second axiom of the Dirac bundle) we have
\[
 \real\int_{\R^m}\big(\tilde\phi_n(R_nx),\nabla(\bt\circ\cs_a^{-1})(R_nx)\cdot_{\ig_{\R^m}}\tilde\phi_{n,R_n}  \big)d\vol_{\ig_{\R^m}} \equiv 0.
\]
Moreover, using the fact $\{\nabla\tilde\phi_n\}$ is uniformly bounded in $L^{\frac{2m}{m+1}}(\R^m,\mbs_m)$ and $\bt$ has a compact support, we obtain
\[
\aligned
&\Big|\int_{\R^m}\Big( R_n^{\frac{m+1}2}\nabla\tilde\phi_n(R_nx)\cdot x, R_n\nabla(\bt\circ\cs_a^{-1})(R_nx)\cdot_{\ig_{\R^m}}\tilde\phi_{n,R_n}\Big)d\vol_{\ig_{\R^m}}\Big| \\
&\quad =\Big|\int_{\R^m}\Big( \nabla\tilde\phi_n(x)\cdot x, \nabla(\bt\circ\cs_a^{-1})\cdot_{\ig_{\R^m}}\tilde\phi_n\Big)d\vol_{\ig_{\R^m}}\Big| \\
&\quad \leq C |\nabla\tilde\phi_n|_{\frac{2m}{m+1}}\cdot \Big( \int_{B_{3r}(a)\setminus B_{2r}(a)}|\psi_n|^{2^*} \Big)^{\frac1{2^*}}=o_n(1)
\endaligned
\]
as $n\to\infty$. Hence
\[
\Big|\int_{\R^m}\Big( \frac{d}{dR}\Big|_{R=R_n}\tilde\phi_{n,R}, R_n\nabla(\bt\circ\cs_a^{-1})(R_nx)\cdot_{\ig_{\R^m}}\tilde\phi_{n,R_n}\Big)d\vol_{\ig_{\R^m}}\Big|
=o_n(1)
\]
as $n\to\infty$. Analogously, it follows that
\[
\Big| \int_{\R^m}\Big( D_{\ig_{\R^m}}\tilde\phi_{n,R_n}, \frac{d}{dR}\Big|_{R=R_n}\big[(\bt\circ\cs_a^{-1})(Rx)\big]\tilde\phi_{n,R_n}\Big)d\vol_{\ig_{\R^m}} \Big|=o_n(1)
\]
as $n\to\infty$. And thus, from \eqref{idd1}, we find
\begin{\equ}\label{iddd1}
\real\int_{\R^m}\Big(D_{\ig_{\R^m}}\tilde\phi_{n,R_n},\frac{d}{dR}\Big|_{R=R_n}\hat\phi_{n,R}\Big)d\vol_{\ig_{\R^m}}=o_n(1)
\end{\equ}
as $n\to\infty$.

To proceed, we use \eqref{id2} to obtain
\begin{\equ}\label{idd2}
\aligned
&\frac{d}{dR}\Big|_{R=R_n}\int_{\R^m}(\bt\circ\cs_a^{-1})(Rx)\cdot\widehat H_{n,R}|\tilde\phi_{n,R}|^{p_n}d\vol_{\ig_{\R^m}}\\
&\quad =
\frac{m-1}2(p_n-2^*)R_n^{\frac{m-1}2(p_n-2^*)-1}\int_{S^m}\bt H|\psi_n|^{p_n}d\vol_{\ig_{S^m}}.
\endaligned
\end{\equ}
On the other hand,
\[
\aligned
&\frac{d}{dR}\Big|_{R=R_n}\int_{\R^m}(\bt\circ\cs_a^{-1})(Rx)\cdot\widehat H_{n,R}|\tilde\phi_{n,R}|^{p_n}d\vol_{\ig_{\R^m}}\\
&\quad =\int_{\R^m}\frac{d}{dR}\Big|_{R=R_n}\big[ (\bt\circ\cs_a^{-1})(Rx)\cdot\widehat H_{n,R} \big]\cdot|\tilde\phi_{n,R_n}|^{p_n}d\vol_{\ig_{\R^m}} \\
&\qquad  + p_n\real\int_{\R^m}(\bt\circ\cs_a^{-1})(R_nx)\cdot\widehat H_{n,R_n}|\tilde\phi_{n,R_n}|^{p_n-2}\Big(\tilde\phi_{n,R_,}, \frac{d}{dR}\Big|_{R=R_n}\hat\phi_{n,R}\Big)d\vol_{\ig_{\R^m}}\\
&\qquad -p_n\int_{\R^m}\frac{d}{dR}\Big|_{R=R_n}\big[(\bt\circ\cs_a^{-1})(Rx)\big]\cdot\widehat H_{n,R_n}|\tilde\phi_{n,R_n}|^{p_n}d\vol_{\ig_{\R^m}},
\endaligned
\]
and by using $f(x)=\frac2{1+|x|^2}$ and $\bt$ has compact support, we have
\[
\int_{\R^m}\frac{d}{dR}\Big|_{R=R_n}\big[ (\bt\circ\cs_a^{-1})(Rx)\cdot\widehat H_{n,R} \big]\cdot|\tilde\phi_{n,R_n}|^{p_n}d\vol_{\ig_{\R^m}}=O_n(1)
\]
and
\[
\aligned
&\Big|\int_{\R^m}\frac{d}{dR}\Big|_{R=R_n}\big[(\bt\circ\cs_a^{-1})(Rx)\big]\cdot\widehat H_{n,R_n}|\tilde\phi_{n,R_n}|^{p_n}d\vol_{\ig_{\R^m}}\Big| \\
&\quad \leq C \int_{B_{3r}(a)\setminus B_{2r}(a)}|\psi_n|^{p_n}d\vol_{\ig_{S^m}}=o_n(1)
\endaligned
\]
as $n\to\infty$. Thus, by virtue of \eqref{idd2}, we infer that
\begin{\equ}\label{iddd2}
\aligned
&\real\int_{\R^m}(\bt\circ\cs_a^{-1})(R_nx)\cdot\widehat H_{n,R_n}|\tilde\phi_{n,R_n}|^{p_n-2}\Big(\tilde\phi_{n,R_,}, \frac{d}{dR}\Big|_{R=R_n}\hat\phi_{n,R}\Big)d\vol_{\ig_{\R^m}}\\
&\quad =\frac{m-1}2(p_n-2^*)R_n^{\frac{m-1}2(p_n-2^*)-1}\int_{S^m}\bt H|\psi_n|^{p_n}d\vol_{\ig_{S^m}} + O_n(1)
\endaligned
\end{\equ}
as $n\to\infty$.

Combining \eqref{eq-tilde-n-r}, \eqref{iddd1}, \eqref{iddd2} and Lemma \ref{concentration parameter}, we
can conclude
\[
(2^*-p_n)R_n^{-1}\int_{S^m}\bt H|\psi_n|^{p_n}d\vol_{\ig_{S^m}}=O_n(1) \quad \text{as } n\to\infty.
\]
Since the blow-up phenomenon suggests $\lim_{n\to\infty}\int_{S^m}\bt H|\psi_n|^{p_n}d\vol_{\ig_{S^m}}>0$, we find
$2^*-p_n=O(R_n)$ as $n\to\infty$. Therefore
\[
\lim_{n\to\infty}R_n^{\frac{m-1}2(2^*-p_n)}=\lim_{n\to\infty}e^{O(1)R_n\ln R_n}=1.
\]
\end{proof}

\textit{\textbf{Acknowledgements}}. This paper was done while the author was visiting Gie\ss en Universit\"at (Germany)
as a Humboldt fellow.
The author would like to thank Prof.
Thomas Bartsch for his constant availability and
helpful discussions.

\vspace{2mm}
{\sc Tian Xu\\
 Center for Applied Mathematics, Tianjin University\\
 Tianjin, 300072, China}\\
 xutian@amss.ac.cn

\end{document}